\newcommand{\comment}[1]{}
\newcommand{\Q}{\mathbf{Q}}
\newcommand{\Z}{\mathbf{Z}}
\newcommand{\F}{\mathbf{F}}
\newcommand{\m}{\mathfrak{m}}
\newcommand{\pa}{\mathfrak{p}}
\newcommand{\qa}{\mathfrak{q}}
\newcommand{\bi}{\langle\,\, ,\,\rangle}
\newcommand{\limplies}{\Longleftarrow}
\theoremstyle{plain}
\newtheorem{theorem}{Theorem}[section]
\newtheorem{corollary}[theorem]{Corollary}
\newtheorem{lemma}[theorem]{Lemma}
\newtheorem{proposition}[theorem]{Proposition}
\theoremstyle{definition}
\newtheorem{definition}[theorem]{Definition}
\newtheorem{remark}[theorem]{Remark}
\newtheorem{example}[theorem]{Example}
\begin{document}

\begin{center}
{\large\bf Michiel Kosters - Universiteit Leiden \par}
{\large\bf \texttt{mkosters@math.leidenuniv.nl}, \today \par}
 \vspace{3em} {\LARGE\bf Anisotropy and the integral closure \par} 
{\large\bf }
\end{center}
\section{Abstract}

Let $K$ be a number field and let $A$ be an order in $K$. The trace map from $K$ to $\Q$ induces a non-degenerate symmetric
bilinear form $\bi: B \times B \to \Q/\Z$ where $B$ is a certain finite abelian group of size $\Delta(A)$. In this article we discuss how one can
obtain information about $\mathcal{O}_K$ by purely looking at this symmetric bilinear form. The concepts of \emph{anisotropy} and
\emph{quasi-anisotropy}, as defined in another article by the author, turn out to be very useful. We will for example show that under certain
assumptions
one can obtain $\mathcal{O}_K$ directly from $\bi$. 

In this article we will work in a more general setting than we have discussed above. We consider orders over Dedekind domains. 

\section{Introduction}

We will discuss the relation between the new concepts of anisotropy and quasi-anisotropy as defined in \cite{KO2} and the integral closure of an order
in its total quotient ring. These concepts
show that in some cases one can find explicit formulas for the integral closure. \\

All rings in this article are assumed to be commutative. \\

We will first discuss some practical versions of the theorems which we will prove in this article. First let $B$ be a finite
abelian group, with additive notation. Then
we
define the \emph{lower root} of $B$ as
\begin{eqnarray*}
 \mathrm{lr}(B)=\sum_{r \in \Z} rB \cap B[r],
\end{eqnarray*}
where $B[r]=\{b \in B: rb=0 \}$. Let $\alpha$ be an algebraic integer and let $K=\Q(\alpha)$ and $A=\Z[\alpha]$. One has a trace map
$\mathrm{Tr}_{K/\Q}:
K \to \Q$. Now define the \emph{trace dual} of $A$ as 
\begin{eqnarray*}
 A^{\dagger}=\{x \in K: \mathrm{Tr}(xA) \subseteq \Z \}.
\end{eqnarray*}
It turns out that $A^{\dagger}$ contains $A$ and that $A^{\dagger}/A$ is a finite abelian group. Let $\overline{A}=\mathcal{O}_K$ be the integral
closure of $A$ in $K$. Our goal is to determine $\overline{A}$.  

The starting point of the theory which we develop in this article, is the following theorem (see Section \ref{ta}).

\begin{theorem} \label{1}
Let $p \in \Z$ be prime and assume that $p>[K:\Q]$. Then we have $p \mid \mathrm{exp} (\overline{A}/A)$ if and only if $p^2 \mid \mathrm{exp}
(A^{\dagger}/A)$, where $\mathrm{exp}$ stands for the exponent of the corresponding group. 
\end{theorem}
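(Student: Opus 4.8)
The plan is to localize at $p$ and reduce everything to a statement about the trace form of the finite $\F_p$-algebra $\mathcal{O}_K/p\mathcal{O}_K$. Put $n=[K:\Q]$, and for the given $p$ set $R=A_{(p)}$ and $S=\overline{A}_{(p)}=(\mathcal{O}_K)_{(p)}$ inside $K$, so that $S$ is the integral closure of $R$, a semilocal Dedekind domain. Since $\overline{A}/A$ and $A^{\dagger}/A$ are finite and localization is exact, one has $p\mid\mathrm{exp}(\overline{A}/A)\iff R\neq S$, and $p^{2}\mid\mathrm{exp}(A^{\dagger}/A)\iff pR^{\dagger}\not\subseteq R$, where $R^{\dagger}=\{x\in K:\mathrm{Tr}(xR)\subseteq\Z_{(p)}\}=A^{\dagger}_{(p)}$; moreover $R\subseteq S\subseteq R^{\dagger}$ because traces of integral elements are integral. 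As $p>n$, every prime $\mathfrak{p}$ of $\mathcal{O}_K$ above $p$ has $e_{\mathfrak{p}}\le n<p$, so $p$ is tamely ramified in $\mathcal{O}_K$. I would first settle the implication $p^{2}\mid\mathrm{exp}(A^{\dagger}/A)\Rightarrow p\mid\mathrm{exp}(\overline{A}/A)$ in contrapositive form: if $R=S$ then $R^{\dagger}$ is the localized inverse different $\mathfrak{d}^{-1}$, and tameness gives $v_{\mathfrak{p}}(\mathfrak{d})=e_{\mathfrak{p}}-1\le v_{\mathfrak{p}}(p)$ for all $\mathfrak{p}\mid p$, whence $pR^{\dagger}=p\mathfrak{d}^{-1}\subseteq S=R$ and $p^{2}\nmid\mathrm{exp}(A^{\dagger}/A)$.

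The content is the reverse implication, which I would again prove contrapositively: assuming $pR^{\dagger}\subseteq R$, show $R=S$. From $S\subseteq R^{\dagger}$ we get $pS\subseteq R\subseteq S$, so $\overline{R}:=R/pS$ is an $\F_p$-subalgebra of the finite $\F_p$-algebra $\overline{S}:=S/pS\cong\prod_{\mathfrak{p}\mid p}\mathcal{O}_K/\mathfrak{p}^{e_{\mathfrak{p}}}$. The trace pairing of $S$ takes values in $\Z_{(p)}$, hence reduces to the trace form $\overline{B}$ of the $\F_p$-algebra $\overline{S}$, and unwinding $pR^{\dagger}\subseteq R$ exactly as in the easy case shows that $\overline{R}$ contains its own orthogonal complement $\overline{R}^{\perp}$ for $\overline{B}$. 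Now two facts: nilpotents of $\overline{S}$ pair to zero with everything (multiplication by a nilpotent has trace $0$), so the nilradical $\mathfrak{n}$ lies in $\mathrm{rad}(\overline{B})$; and since $p\nmid e_{\mathfrak{p}}$ the form induced by $\overline{B}$ on $\overline{S}/\mathfrak{n}\cong\prod_{\mathfrak{p}\mid p}k_{\mathfrak{p}}$ is $\bigoplus_{\mathfrak{p}\mid p}e_{\mathfrak{p}}\cdot\mathrm{Tr}_{k_{\mathfrak{p}}/\F_p}$, which is \emph{non-degenerate} (one checks this via $\mathcal{O}_K/\mathfrak{p}^{e_{\mathfrak{p}}}\cong k_{\mathfrak{p}}[t]/(t^{e_{\mathfrak{p}}})$). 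Therefore $\mathrm{rad}(\overline{B})=\mathfrak{n}\subseteq\overline{R}^{\perp}\subseteq\overline{R}$; passing to $\overline{S}/\mathfrak{n}$, everything reduces to showing that an $\F_p$-subalgebra $E'$ of $E:=\prod_{\mathfrak{p}\mid p}k_{\mathfrak{p}}$ satisfying $E'^{\perp}\subseteq E'$ for the non-degenerate form above must equal $E$ — for then $\overline{R}=\overline{S}$, hence $R=S$.

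This last step is the main obstacle, and it is where $p>n$ is really used. If $E'\neq E$ then, reading the inclusion $E'\subseteq E$ on spectra, at least one of the following occurs: (a) the projection of $E'$ to some factor $k_{\mathfrak{p}}$ is a proper subfield $\ell$; or (b) two distinct factors $k_{\mathfrak{p}_1},k_{\mathfrak{p}_2}$ are not separated by the idempotents of $E'$, so that the image of $E'$ in $k_{\mathfrak{p}_1}\times k_{\mathfrak{p}_2}$ is the graph $\{(\phi_1(a),\phi_2(a)):a\in\ell\}$ of two embeddings of a common subfield $\ell$. In case (a) I would produce an element supported only in the $k_{\mathfrak{p}}$–coordinate lying in $\ell^{\perp}$ but not in $\ell$: it exists because $\mathrm{Tr}_{k_{\mathfrak{p}}/\F_p}|_{\ell}=[k_{\mathfrak{p}}:\ell]\cdot\mathrm{Tr}_{\ell/\F_p}$ with $[k_{\mathfrak{p}}:\ell]\le f_{\mathfrak{p}}\le n<p$, so this restriction is non-degenerate, forcing $\ell\cap\ell^{\perp}=0$ and $\ell^{\perp}\not\subseteq\ell$. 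In case (b) I would produce an element supported in the coordinates $\mathfrak{p}_1,\mathfrak{p}_2$ orthogonal to the diagonal $\ell$ but not lying on it; the crucial point is that the diagonal is \emph{not} orthogonal to that element, because pairing two diagonal elements yields the scalar $\bigl(e_{\mathfrak{p}_1}[k_{\mathfrak{p}_1}:\ell]+e_{\mathfrak{p}_2}[k_{\mathfrak{p}_2}:\ell]\bigr)=\tfrac{1}{[\ell:\F_p]}\bigl(e_{\mathfrak{p}_1}f_{\mathfrak{p}_1}+e_{\mathfrak{p}_2}f_{\mathfrak{p}_2}\bigr)$ times $\mathrm{Tr}_{\ell/\F_p}$, and $e_{\mathfrak{p}_1}f_{\mathfrak{p}_1}+e_{\mathfrak{p}_2}f_{\mathfrak{p}_2}\le\sum_{\mathfrak{p}\mid p}e_{\mathfrak{p}}f_{\mathfrak{p}}=n<p$ makes this scalar nonzero in $\F_p$. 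In either case $E'^{\perp}\not\subseteq E'$, a contradiction, so $E'=E$. I expect the fiddliest part to be the explicit construction and verification in case (b) — tracking all coordinates in one block of $E'$ and checking the produced element genuinely lies outside $E'$ — and, more structurally, the identification $\mathrm{rad}(\overline{B})=\mathfrak{n}$, which is exactly the input of tame ramification. Note that monogenicity of $A$ is never used, in accordance with the remark that the statement holds for arbitrary orders.
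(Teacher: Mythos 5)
Your proof is correct but takes a genuinely different route from the paper. After localizing at $p$, the paper runs through its tameness machinery: Theorem \ref{tammy2} uses the bound $\dim_{\F_p}(B/pB)\le[K:\Q]<p$ to show $A$ is tame at $p$, and Theorem \ref{maino} then gives the criterion $\pa A^{\dagger}\subseteq A\iff A=\overline{A}$ for a tame local order, proved via the intermediate ring $T=\m:\m$ and the orthogonal splitting $T/\m=A/\m\perp\bigl(T\cap\pa A^{\dagger}\bigr)/\m$ furnished by Theorem \ref{wuffer} and Lemma \ref{sop}. You never establish tameness of $A$ itself: you pass directly to $\overline{S}=\mathcal{O}_{K,(p)}/p\mathcal{O}_{K,(p)}$ (a different quotient than the paper's $A/\pa A$), deduce from $pR^{\dagger}\subseteq R$ that $\overline{R}=R/pS$ is a subalgebra of $\overline{S}$ containing its own orthogonal complement, kill the nilradical (which equals the radical of the trace form because $\mathcal{O}_K$ is tame at $p$), and finish by classifying $\F_p$-subalgebras $E'$ of the \'etale algebra $\prod_{\pa\mid p}k_{\pa}$ with $E'^{\perp}\subseteq E'$; the hypothesis $n<p$ enters there through $e_{\pa}<p$, $[k_{\pa}:\ell]<p$, and $e_{\pa_1}f_{\pa_1}+e_{\pa_2}f_{\pa_2}<p$. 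Your argument is more elementary and self-contained, avoiding the $\m:\m$ theory and the tameness-of-$A$ discussion entirely, but it is tailored to $\Z$; the paper's formulation works over an arbitrary Dedekind domain and yields tameness of all intermediate orders as a reusable by-product, needed again for Theorem \ref{nano}. I checked the step you flag as fiddly, case (b): the projection of $E'$ to the two chosen coordinates is exactly the graph of the two embeddings of $\ell$, so a vector orthogonal to that graph is automatically orthogonal to all of $E'$, and a nonzero such vector cannot lie in $E'$ since the graph is determined by any single coordinate; so the argument does close up.
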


Using this theorem one can prove that, under the assumption that a certain form is anisotropic or quasi-anisotropic, $\mathcal{O}_K$ 
corresponds to the lower root of $A^{\dagger}/A$. For example, one has following theorem, which uses the concept of anisotropy (see Section \ref{ta}).

\begin{theorem} \label{2}
 Suppose that $B=A^{\dagger}/A$ and that $2 \nmid \#B$. Suppose that $B \cong \Z/m\Z \times \Z/m'\Z$, where $m=\prod_{p \mathrm{\ prime}} p^{n(p)}$
and similarly $m'=\prod_{p \mathrm{\ prime}} p^{n'(p)}$ such that for all primes $p$ we have $n(p)n'(p)=0$ or $n(p)+n'(p)$ is odd.
Then $\overline{A}/A=\mathrm{lr}(A^{\dagger}/A)$. 
\end{theorem}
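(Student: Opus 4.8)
The plan is to decompose everything into $p$\dash primary components and argue one prime at a time. For distinct primes $p\neq q$ the components $B_p$ and $B_q$ are orthogonal under $\bi$, so $\bi$ is the orthogonal sum of its restrictions to the $B_p$; moreover $\mathrm{lr}(B)=\bigoplus_p\mathrm{lr}(B_p)$ (on $B_p$ only the powers of $p$ contribute to $\mathrm{lr}$) and $\overline{A}/A=\bigoplus_p(\overline{A}/A)_p$ with $(\overline{A}/A)_p$ a subgroup of $B_p$. Hence it suffices to prove $(\overline{A}/A)_p=\mathrm{lr}(B_p)$ for every odd prime $p$. I will use throughout the chain $A\subseteq\overline{A}\subseteq\overline{A}^{\dagger}\subseteq A^{\dagger}$, the perfectness of $\bi$ on $B$, the identity $\overline{A}^{\dagger}/A=(\overline{A}/A)^{\perp}$ (which gives $\#B=\#(\overline{A}/A)^2\cdot\#(\overline{A}^{\dagger}/\overline{A})$), the fact that $\overline{A}/A$ is $\bi$\dash isotropic since $\mathrm{Tr}(\overline{A}\cdot\overline{A})\subseteq\Z$, and the isomorphism $\overline{A}^{\dagger}/\overline{A}\cong\mathcal{O}_K/\mathfrak{d}$, where $\mathfrak{d}$ is the different of $\mathcal{O}_K$ over $\Z$.

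The first step is the inclusion $\overline{A}/A\subseteq\mathrm{lr}(B)$, and this is where the hypothesis enters (it is precisely a (quasi\dash)anisotropy condition on $\bi$). Fix an odd prime $p$ and write $a=n(p)\geq b=n'(p)$, so $B_p\cong\Z/p^a\Z\times\Z/p^b\Z$ and, by hypothesis, $b=0$ or $a+b$ is odd. Since $p$ is odd the form on $B_p$ diagonalizes as an orthogonal sum of rank\dash one forms on the two cyclic summands, so the self\dash pairing $\langle(x,y),(x,y)\rangle$ equals $(ux^2+vp^{a-b}y^2)/p^a$ in $\Q/\Z$ with $u,v$ units (the second term being absent when $b=0$). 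The two summands have $p$\dash adic valuations $2v_p(x)$ and $2v_p(y)+(a-b)$, which are of opposite parity exactly when $a+b$ is odd; hence the valuation of the sum is the minimum of the two, and it is $\geq a$ if and only if $v_p(x)\geq\lceil a/2\rceil$ and $v_p(y)\geq\lceil b/2\rceil$. Thus the set of $\bi$\dash isotropic elements of $B_p$ is exactly $p^{\lceil a/2\rceil}\Z/p^a\Z\times p^{\lceil b/2\rceil}\Z/p^b\Z=\mathrm{lr}(B_p)$. Since $(\overline{A}/A)_p$ consists of isotropic elements, $(\overline{A}/A)_p\subseteq\mathrm{lr}(B_p)$; summing over $p$ gives $\overline{A}/A\subseteq\mathrm{lr}(B)$.

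For the reverse inclusion it is enough, given the first step and the cardinality identity, to show that $\#(\overline{A}^{\dagger}/\overline{A})_p=p^{\delta(p)}$, where $\delta(p)$ is the number of $n(p),n'(p)$ that are odd — note $\delta(p)\leq 1$, since if both were odd then both would be positive and $n(p)+n'(p)$ even, contradicting the hypothesis. Indeed this gives $\#(\overline{A}/A)_p=p^{(n(p)+n'(p)-\delta(p))/2}=p^{\lfloor n(p)/2\rfloor+\lfloor n'(p)/2\rfloor}=\#\mathrm{lr}(B_p)$, and a subgroup of $\mathrm{lr}(B_p)$ of that cardinality must equal it. Now $(\overline{A}^{\dagger}/\overline{A})_p\cong\prod_{\mathfrak{p}\mid p}\mathcal{O}_{\mathfrak{p}}/\mathfrak{p}^{v_{\mathfrak{p}}(\mathfrak{d})}$ is a subquotient of $B_p$, hence has $p$\dash rank at most $2$, and at most $1$ when $n(p)n'(p)=0$ (a subquotient of a cyclic group is cyclic). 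If some $\mathfrak{p}\mid p$ were wildly ramified, then $p\mid e_{\mathfrak{p}}$, so $e_{\mathfrak{p}}\geq p\geq 3$ and $v_{\mathfrak{p}}(\mathfrak{d})\geq e_{\mathfrak{p}}$, and then $\mathcal{O}_{\mathfrak{p}}/\mathfrak{p}^{v_{\mathfrak{p}}(\mathfrak{d})}$ would have $p$\dash rank $f_{\mathfrak{p}}e_{\mathfrak{p}}\geq 3$, contradicting the rank bound. So every $\mathfrak{p}\mid p$ is tame, $v_{\mathfrak{p}}(\mathfrak{d})=e_{\mathfrak{p}}-1$, each factor $\mathcal{O}_{\mathfrak{p}}/\mathfrak{p}^{e_{\mathfrak{p}}-1}$ is killed by $p$, so $(\overline{A}^{\dagger}/\overline{A})_p$ is elementary abelian and $\log_p\#(\overline{A}^{\dagger}/\overline{A})_p$ equals its $p$\dash rank, which is $\leq 2$ (and $\leq 1$ if $n(p)n'(p)=0$). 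Finally $n(p)+n'(p)=2c+\log_p\#(\overline{A}^{\dagger}/\overline{A})_p$ for some $c\geq 0$, so $\log_p\#(\overline{A}^{\dagger}/\overline{A})_p$ has the parity of $n(p)+n'(p)$; combining this with the rank bound and the hypothesis (which forces $n(p)+n'(p)$ odd whenever both $n(p),n'(p)$ are positive) yields $\log_p\#(\overline{A}^{\dagger}/\overline{A})_p=\delta(p)$ in every case, as required.

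The main obstacle is this reverse inclusion, i.e.\ bounding $\mathcal{O}_K/\mathfrak{d}$ from above at each $p$: the crucial input is that the hypothesis on the group structure of $B$ makes the subquotient $\mathcal{O}_K/\mathfrak{d}$ of $B_p$ small enough in rank to rule out wild ramification over $p$ and to pin down $[\overline{A}:A]$ at $p$ exactly. For primes $p>[K:\Q]$ the needed exponent bound could alternatively be extracted from Theorem~\ref{1}, but the ramification argument treats all odd primes uniformly. The remaining ingredients — the primary decomposition, the valuation computation for isotropic vectors, and the cardinality bookkeeping — are routine.
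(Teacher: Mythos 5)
Your proof is correct, but it follows a genuinely different and more hands-on route than the paper's. The paper proves Theorem~\ref{2} by localizing via Lemma~\ref{floop} and invoking Theorem~\ref{zoss}, which rests on three pieces of general machinery: Theorem~\ref{tammy2} derives tameness of every intermediate order from the bound $\dim_{\F_p}(B/pB)\leq 2<p$; Lemma~\ref{or} (imported from \cite{KO2}) says a non-degenerate form on a cyclic module, or on a $2$\dash generated module of odd length, is anisotropic; and Theorem~\ref{main5} converts anisotropy plus tameness of $\overline{A}$ into $\overline{A}/A=\mathrm{lr}(B)$, using that $\mathrm{lr}(B)$ is then the \emph{unique} submodule $L$ with $\pa L^\perp\subseteq L\subseteq L^\perp$. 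You instead work one odd prime at a time and prove both inclusions by direct computation: the parity hypothesis forces the two diagonal contributions to a self\dash pairing on $B_p$ to have unequal valuations, so the isotropic vectors are exactly $\mathrm{lr}(B_p)$ and the totally isotropic submodule $\overline{A}/A$ lands in $\mathrm{lr}(B)$; for the reverse inclusion you identify $\overline{A}^\dagger/\overline{A}$ with $\mathcal{O}_K/\mathfrak{d}$, rule out wild ramification above $p$ by observing that it would force a subquotient of $B_p$ to have $p$\dash rank $\geq 3$, and then pin down $[\overline{A}:A]$ at $p$ by a cardinality count using non-degeneracy. The paper's route buys generality --- the same argument yields Theorem~\ref{zoss} over an arbitrary Dedekind base, hence for function fields as well, and slots into the same framework as the quasi-anisotropy results --- whereas your route is self-contained, avoids the formalism of \cite{KO2} entirely, and exposes the mechanism: the hypothesis on the group structure of $B$ is precisely what forbids wild ramification from hiding inside $\mathcal{O}_K/\mathfrak{d}$. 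In effect you are re-deriving the relevant cases of Lemma~\ref{or}, Theorem~\ref{tammy2}, and Theorem~\ref{main5} in the number field setting.
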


Using quasi-anisotropy (and some other techniques) one finds a stronger version. In order to find the integral closure, 
it is enough to find the integral closure locally. We have the following theorem (see Section \ref{beba}).

\begin{theorem} \label{3}
Let $\m \subset A$ be a maximal ideal, let $p\Z=\m \cap \Z$. Define the numbers $n(i)$ such that $\left( A^{\dagger}/A \right)_{\m} \cong
\bigoplus_{i \geq 1} \left(\Z/p^i \Z \right)^{n(i)}$. Suppose that the following conditions are satisfied. 
\begin{enumerate}
\item
$p> \sum_{i \geq 1} n(i)$;
\item
There exist $i_1,i_2 \in \Z_{\geq 1}$ such that 
\begin{itemize}
\item
$i_1 \not \equiv i_2 \bmod{2}$;
\item
$n(i)=0$ for all $i \not \in \{1,i_1,i_2\}$;
\item
$n(i) \in \{0,[A/\m:\Z/p\Z]\}$ for $i \in \{i_1,i_2\}$.
\end{itemize}
\end{enumerate}
Then one has $\left( \overline{A}/A \right)_{\m}=\mathrm{lr}\left((A^{\dagger}/A)_{\m} \right)$.
\end{theorem}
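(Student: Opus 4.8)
The plan is to make everything local at $\m$ at once: $\overline{A}/A$, the trace dual $A^{\dagger}/A$, and the operation $\mathrm{lr}$ all commute with decomposition along the maximal ideals of $A$, so I may replace $A$ by its localization (or completion) at $\m$ and assume $A$ is a local order with maximal ideal $\m$ over $p$ and integral closure $\overline{A}$, and that $B:=(A^{\dagger}/A)_{\m}\cong\bigoplus_{i\ge 1}(\Z/p^{i}\Z)^{n(i)}$ carries the non\dash degenerate trace form $\bi$; put $f=[A/\m:\Z/p\Z]$ and assume $i_{1}<i_{2}$. Write $H:=(\overline{A}/A)_{\m}\subseteq B$. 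Two basic facts fix the geometry. First, $H$ is totally isotropic: if $x,y\in\overline{A}$ then $xy\in\overline{A}$, so $\mathrm{Tr}(xy)\in\Z$, i.e.\ $\langle\bar x,\bar y\rangle=0$. Second, the orthogonal complement of $H$ in $B$ is $(\overline{A}^{\dagger}/A)_{\m}$, because $x\in A^{\dagger}$ pairs integrally with all of $\overline{A}$ exactly when $x\in\overline{A}^{\dagger}$ (and $\overline{A}^{\dagger}\subseteq A^{\dagger}$ since $A\subseteq\overline{A}$). Hence $H^{\perp}/H\cong(\overline{A}^{\dagger}/\overline{A})_{\m}$ is the trace form of the \emph{maximal} order $\overline{A}$, and, $\bi$ being non\dash degenerate on $B$, $B/H^{\perp}\cong\Hom(H,\Q/\Z)$.

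Now I would use condition (i). As $H^{\perp}/H$ is a subquotient of $B$, its minimal number of generators is at most $\sum_{i}n(i)<p$, so (the local form of) Theorem~\ref{1} applies to the maximal order $\overline{A}$, whose integral closure is itself: the ``$p\mid\exp$'' side is vacuous, forcing $p^{2}\nmid\exp(\overline{A}^{\dagger}/\overline{A})_{\m}$. Therefore $H^{\perp}/H\cong(\Z/p\Z)^{t}$ for some $t\ge 0$, and $B$ acquires a three\dash step filtration $0\subseteq H\subseteq H^{\perp}\subseteq B$ with graded pieces $H$, $(\Z/p\Z)^{t}$, and $\Hom(H,\Q/\Z)$. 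Taking lengths yields $\sum_{i}i\,n(i)=2\,\ell(H)+t$; and from $H\hookrightarrow B$ together with $B\twoheadrightarrow\Hom(H,\Q/\Z)$ one gets $\mu_{k}(H)\le\mu_{k}(B)$ for all $k$, where $\mu_{k}(M):=\sum_{i\ge k}(\text{number of }\Z/p^{i}\Z\text{-summands of }M)$.

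Next I would compute $\mathrm{lr}(B)$ outright. For the $p$\dash group $B$ only the moduli $r=p^{j}$ contribute, $p^{j}B\cap B[p^{j}]=\bigoplus_{i}\bigl(p^{\max(j,\,i-j)}\Z/p^{i}\Z\bigr)^{n(i)}$, and for a fixed cyclic factor these subgroups are nested in $j$, the largest occurring at $j$ nearest $i/2$; hence $\mathrm{lr}$ is additive on a cyclic decomposition and $\mathrm{lr}(\Z/p^{i}\Z)\cong\Z/p^{\lfloor i/2\rfloor}\Z$, so that
\begin{eqnarray*}
\mathrm{lr}(B)\;\cong\;\bigl(\Z/p^{\lfloor i_{1}/2\rfloor}\Z\bigr)^{n(i_{1})}\;\oplus\;\bigl(\Z/p^{\lfloor i_{2}/2\rfloor}\Z\bigr)^{n(i_{2})}.
\end{eqnarray*}
The same $\gcd$ computation that shows $p^{j}B\cap B[p^{j}]$ and $p^{j'}B\cap B[p^{j'}]$ are mutually orthogonal shows $\mathrm{lr}(B)$ is itself totally isotropic. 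It then remains to identify the two isotropic subgroups $H$ and $\mathrm{lr}(B)$, and this is where condition (ii) --- quasi\dash anisotropy --- is essential and where I expect the real difficulty to lie.

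The point of (ii) is to put the form $\bi$ into the quasi\dash anisotropic regime of \cite{KO2}. The requirement that $n(i)=0$ off $\{1,i_{1},i_{2}\}$ with $i_{1}\not\equiv i_{2}\bmod 2$ makes the even\dash and odd\dash level parts of $B$ split, modulo $\mathrm{lr}(B)$, into a hyperbolic\dash type block (whose maximal totally isotropic subgroup is transparently $\mathrm{lr}$ of that block) together with a complementary block on which the induced form is anisotropic. The requirement $n(i_{j})\in\{0,f\}$ is precisely the divisibility $f\mid n(i_{j})$ forced on $H$: indeed $\overline{A}/A$ is a module over the Artinian local ring $A/\mathfrak{f}$ ($\mathfrak{f}$ the conductor), whose unique simple module is $A/\m$ of $\Z/p\Z$\dash dimension $f$, so each submodule $\bigl(p^{i-1}(\overline{A}/A)_{\m}\bigr)[p]$ has $\Z/p\Z$\dash dimension divisible by $f$, whence every elementary\dash divisor multiplicity of $H$ is divisible by $f$. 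Granting this, I would (a) invoke the quasi\dash anisotropy results of \cite{KO2} to conclude that under (ii) the form induced on $B/\mathrm{lr}(B)$ is anisotropic, so that the totally isotropic $H$ is forced into $\mathrm{lr}(B)$; and (b) feed $H\subseteq\mathrm{lr}(B)$, the length identity $\sum_{i}i\,n(i)=2\ell(H)+\dim_{\Z/p\Z}(H^{\perp}/H)$, the inequalities $\mu_{k}(H)\le\mu_{k}(B)$, the divisibility by $f$, and the explicit shape of $\mathrm{lr}(B)$ into a counting argument forcing $\ell(H)=\ell(\mathrm{lr}(B))$, hence $H=\mathrm{lr}(B)$. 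The principal obstacle is thus the combinatorial core of (a)--(b): checking that condition (ii) genuinely lands in the quasi\dash anisotropic case of \cite{KO2}, and that the numerical constraints above leave no room for an isotropic subgroup lying strictly between $\mathrm{lr}(B)$ and $B$; a subsidiary point is to confirm that Theorem~\ref{1} is available in the purely local form used here, with the weaker bound $p>\sum_{i}n(i)$ replacing $p>[K:\Q]$.
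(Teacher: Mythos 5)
Your outline gets the topology of the argument right --- localize at $\m$, observe $H=(\overline A/A)_{\m}$ is totally isotropic with $H^{\perp}/H\cong(\overline A^{\dagger}/\overline A)_{\m}$, compute $\mathrm{lr}(B)$, and then try to force $H=\mathrm{lr}(B)$ by showing the form is quasi\dash anisotropic --- but the step you flag as ``the principal obstacle'' is exactly the step that is missing, and it does not come for free. Over $\Z_p$ the module $B$ is \emph{not} cyclic or two\dash generated: after dividing out $B[p]$ it is $(\Z/p^{i_1-1}\Z)^{f}\oplus(\Z/p^{i_2-1}\Z)^{f}$ with $f=[A/\m:\Z/p\Z]$, which needs $2f$ generators, so Lemma~\ref{or} (cyclic, or two generators with odd length) simply does not apply in general. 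Condition (ii) as written gives no anisotropy statement over $\Z_p$. The paper's resolution, which your proposal does not contain, is the ``better base ring'' $T$ of Section~13: inside the completed local order $A$ there is a canonical complete discrete valuation ring $T=R[\beta]$, \'etale over $R$, with residue field the separable closure $(A/\m)_s$ of $R/\pa$ in $A/\m$, satisfying $A^{\dagger}_T=A^{\dagger}_R$ (Lemma~\ref{os}) and $\mathrm{lr}$ computed over $T$ agrees with $\mathrm{lr}$ over $R$. The divisibility $f\mid n(i,\m)$ that you derive (and that the paper also records before Theorem~\ref{fin}) is precisely what guarantees $B\cong_T (T/\pa T)^{n(1)/d}\oplus(T/\pa^{i_1}T)^{n(i_1)/d}\oplus(T/\pa^{i_2}T)^{n(i_2)/d}$ with $n(i_j)/d\in\{0,1\}$; so modulo $B[\pa T]$ it becomes two\dash generated over $T$ of odd length, Lemma~\ref{or} applies, and Lemma~\ref{bra} converts this into quasi\dash anisotropy of the $T$\dash bilinear form. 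Then Theorem~\ref{nano} finishes. Without passing to $T$ there is no bridge from condition (ii) to any anisotropy statement.

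A second, smaller gap: you use condition (i) only to show the \emph{maximal} order $\overline A$ is tame (via Theorem~\ref{1}). That suffices for the anisotropic case (Theorem~\ref{main5}.ii), but quasi\dash anisotropy is weaker and the inductive argument of Theorem~\ref{nano} requires tameness of \emph{every} intermediate order $A_i=\mathfrak{r}_{A_{i-1}}\colon\mathfrak{r}_{A_{i-1}}$, not just $\overline A$. The paper obtains this from Theorem~\ref{tammy2}, which is itself the correct local form of Theorem~\ref{1} with the bound $\dim_{R/\pa}(B/\pa B)<\mathrm{char}(R/\pa)$ that you asked about; it applies uniformly to all $A'$ between $A$ and $\overline A$. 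Your proposed counting argument (b) (lengths, $\mu_k$ inequalities, divisibility by $f$) is extraneous once quasi\dash anisotropy and tameness of the chain are in hand: Theorem~\ref{nano} already gives $\overline A/A=\mathrm{lr}(B)$ directly. Conversely, without quasi\dash anisotropy I do not see how those numerics alone pin down $H$ among the many isotropic subgroups compatible with them, so (b) does not substitute for (a).
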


In this article we will prove the results above in a more general case: we will work with orders over Dedekind domains.

\section{Tameness}
The concept of tameness will play an important role in later sections. In this section we fix a field $k$ and let $A$ be commutative finite
$k$-algebra. Recall
that $A$ is an artinian ring and has only finitely many maximal ideals (\cite{AT}, Chapter 8). We have a natural trace map
\begin{eqnarray*}
\mathrm{Tr}_{A/k}: A &\to& k \\
x &\mapsto& \mathrm{Tr}(\cdot x)
\end{eqnarray*}
where $\cdot x: A \to A$ is the multiplication by $x$ map and $\mathrm{Tr}(\cdot x)$ is the standard trace of an endomorphism on a vector space over $k$.

\begin{definition} \label{eta}
Consider the symmetric $k$-bilinear form 
\begin{eqnarray*}
\bi: A \times A &\to& k \\
\langle x,y \rangle &\mapsto& \mathrm{Tr}_{A/k}(xy).
\end{eqnarray*}
The radical of this form, $A^{\perp}$, is defined to be $\{x \in A: \mathrm{Tr}(xA)=0\}$.
We say that $A$ is \emph{tame} over $k$ if $A^{\perp}$ is equal to the nilradical of $A$. If $A$ is not tame, it is called \emph{wild}. 

We say that $A$ is \emph{finite \'etale} if $\bi$ is non-degenerate, that is, if the natural map $A \to \mathrm{Hom}(A,k)$ which maps $x \in A$ to
$\langle x,\ \rangle$ is an isomorphism of $k$-modules. This is easily seen to be equivalent to saying that the discriminant $\Delta(A/k)$ of $A$ over
$k$ is nonzero. Another equivalent notion is that $A$ is isomorphic to a finite product of finite separable field extensions of $k$
(\cite{LE}, Theorem 2.7).
\end{definition}

\begin{remark}
In Definition \ref{eta} it is always true that the nilradical is contained in the radical of $\bi$.
\end{remark}

\begin{definition}
A prime $\pa \subset A$ is called \emph{wild} if $(A/\pa)/k$ is inseparable or $\mathrm{char}(k) \mid \mathrm{length}_{A_{\pa}}(A_{\pa})$. If $\pa$ is
not wild, it is called \emph{tame}.
\end{definition}

\begin{proposition} \label{weed}
The radical of the algebra $A$ is equal to the intersection of all tame primes. The algebra $A$ is tame if and only if all primes of $A$ are tame.
Furthermore, if $A$ is wild, then $\mathrm{dim}_k(A) \geq \mathrm{dim}_k(A^{\perp})\geq \mathrm{char}(k)>0$.  
\end{proposition}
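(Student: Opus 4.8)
The plan is to reduce the statement to the case where $A$ is local. As a finite $k$-algebra, $A$ is artinian, so it splits as a finite product $A = \prod_{\pa} A_{\pa}$ over its (finitely many, necessarily maximal) primes, each $A_{\pa}$ a local artinian ring whose maximal ideal $\m_{\pa}$ is its nilradical. The trace form respects this decomposition: $\mathrm{Tr}_{A/k}$ restricts to $\mathrm{Tr}_{A_{\pa}/k}$ on each factor, so $A^{\perp} = \prod_{\pa} A_{\pa}^{\perp}$ and the nilradical of $A$ equals $\prod_{\pa}\m_{\pa}$; moreover, under this decomposition the prime $\pa$ is $\m_{\pa}$ in the factor $A_{\pa}$ and $A_{\qa}$ in every other factor $A_{\qa}$, so $\bigcap_{\pa \text{ tame}}\pa$ equals the product over all $\pa$ of $\m_{\pa}$ (for $\pa$ tame) and of $A_{\pa}$ (for $\pa$ wild). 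Here ``the radical of the algebra $A$'' is to be read as the radical $A^{\perp}$ of the trace form. Thus it is enough to prove, for $A$ local with maximal ideal $\m$ and residue field $\kappa = A/\m$: (a) $A^{\perp} = \m$ if $\m$ is tame and $A^{\perp} = A$ if $\m$ is wild; (b) if $\m$ is wild then $\dim_k A \geq \dim_k A^{\perp} \geq \mathrm{char}(k) > 0$. From (a), the first two assertions of the Proposition follow by comparing $\prod_{\pa}A_{\pa}^{\perp}$ with $\prod_{\pa}\m_{\pa}$ and with $\bigcap_{\pa \text{ tame}}\pa$, and from (b) applied to a wild factor one gets the third assertion.

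For the local claim (a) I would first note that $A^{\perp}$ is an ideal of $A$ (from $\langle xa,y\rangle = \langle x,ay\rangle$) containing $\m$ (by the preceding Remark, since elements of $\m$ are nilpotent and multiplication by a nilpotent element has trace zero). As the only ideals of a local ring containing $\m$ are $\m$ and $A$, we have $A^{\perp} \in \{\m, A\}$, with $A^{\perp} = A$ exactly when $A^{\perp}$ contains a unit. But for a unit $x$, multiplication by $x$ is a bijection of $A$, so $\mathrm{Tr}(xA) = \mathrm{Tr}(A)$ and hence $x \in A^{\perp}$ iff $\mathrm{Tr}_{A/k}$ vanishes identically. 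Therefore $A^{\perp} = \m$ precisely when $\mathrm{Tr}_{A/k}\not\equiv 0$, and it remains to show that this holds if and only if $\m$ is tame.

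The crux is to compute $\mathrm{Tr}_{A/k}$ by means of the $\m$-adic filtration $A\supseteq\m\supseteq\m^{2}\supseteq\cdots$. Multiplication by $x\in A$ preserves this filtration, and on each graded piece $\m^{i}/\m^{i+1}$ — a $\kappa$-vector space of some dimension $d_{i}$ — it is scalar multiplication by the image $\bar x\in\kappa$, so its $k$-trace there is $d_{i}\,\mathrm{Tr}_{\kappa/k}(\bar x)$. Since the $k$-trace of an endomorphism respecting a filtration is the sum of the $k$-traces on the graded pieces, and $\sum_{i}d_{i} = \mathrm{length}_{A}(A) =: \ell$, I obtain $\mathrm{Tr}_{A/k}(x) = \ell\cdot\mathrm{Tr}_{\kappa/k}(\bar x)$ for all $x$. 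Hence $\mathrm{Tr}_{A/k}\not\equiv 0$ iff the image of $\ell$ in $k$ is nonzero and $\mathrm{Tr}_{\kappa/k}\not\equiv 0$, i.e.\ iff $\mathrm{char}(k)\nmid\ell$ and $\kappa/k$ is separable — invoking the standard fact that the trace map of a finite field extension is nonzero exactly when the extension is separable — which is precisely the condition that $\m$ be tame. This establishes (a).

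Finally, for (b): if $\mathrm{char}(k) = 0$ then every finite extension of $k$ is separable and the positive integer $\mathrm{length}_{A_{\pa}}(A_{\pa})$ is not divisible by $0$, so every prime is tame and $A$ is tame; thus $A$ wild forces $p := \mathrm{char}(k) > 0$. If $\m$ is wild, then $A^{\perp} = A$ by (a), so it suffices to show $\dim_{k}A\geq p$; writing $\dim_{k}A = \ell\,[\kappa:k]$, wildness gives either $p\mid\ell$, whence $\ell\geq p$, or $\kappa/k$ inseparable, whence $[\kappa:k]\geq p$ (its inseparable degree being a power of $p$ exceeding $1$), and in either case $\dim_{k}A\geq p$. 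Back in the general case, a wild prime $\pa_{0}$ contributes a summand $A_{\pa_{0}}^{\perp} = A_{\pa_{0}}$ of $A^{\perp}$ with $\dim_{k}A_{\pa_{0}}\geq p$, giving $\dim_{k}A\geq\dim_{k}A^{\perp}\geq\dim_{k}A_{\pa_{0}}\geq\mathrm{char}(k) > 0$. I expect the main obstacle to be getting the filtration computation of the trace exactly right — justifying the additivity of the $k$-trace over the graded pieces and the value $d_{i}\,\mathrm{Tr}_{\kappa/k}(\bar x)$ of each contribution — together with the clean invocation of the separability criterion; the remaining steps (the local structure theory, $A^{\perp}$ being an $\m$-containing ideal, and the degree bounds at a wild prime) are routine.
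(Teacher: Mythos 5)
Your proof is correct and takes essentially the same route as the paper: both hinge on the trace formula (Lemma \ref{tracef}) together with the decomposition of the artinian algebra $A$ into its local factors. You re-derive the trace formula via the $\m$-adic filtration rather than a composition series and replace the paper's citation of [LO, Proposition~3.4] with a self-contained argument that $A^{\perp}$ is an $\m$-containing ideal of a local $A$ (hence equals $\m$ or $A$, with the dichotomy governed by whether $\mathrm{char}(k)\nmid\ell$ and $\kappa/k$ is separable), but the underlying computation is the same.
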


For the proof, we need the following lemma. 

\begin{lemma}[Trace formula] \label{tracef}
We have
\begin{eqnarray*}
\mathrm{Tr}_{A/k}(x)= \sum_{\pa \in \mathrm{Spec}(A)} e_{\pa} \cdot \mathrm{Tr}_{(A/\pa)/k}(x+\pa)
\end{eqnarray*}
where $e_{\pa}=\mathrm{length}_{A_{\pa}}(A_{\pa})$. 
\end{lemma}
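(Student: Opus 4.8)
The plan is to compute the trace on the ``graded pieces'' of a composition series of $A$ regarded as a module over itself. Since $A$ is a finite $k$-algebra it is artinian, hence of finite length as an $A$-module; choose a composition series
\begin{eqnarray*}
0 = M_0 \subsetneq M_1 \subsetneq \cdots \subsetneq M_N = A
\end{eqnarray*}
of $A$-submodules with each subquotient $M_j/M_{j-1}$ a simple $A$-module, necessarily isomorphic to $A/\pa_j$ for some maximal ideal $\pa_j \subset A$. Each $M_j$, being an $A$-submodule of $A$, is stable under multiplication by $x$, so $\cdot x$ induces a $k$-linear endomorphism of each $M_j/M_{j-1} \cong A/\pa_j$, and this induced endomorphism is precisely multiplication by the residue class $x + \pa_j$ (note $A/\pa_j$ is a finite extension of $k$, so its trace map makes sense).

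First I would record the elementary fact that the trace of a $k$-linear endomorphism preserving a finite filtration of a finite-dimensional $k$-vector space equals the sum of the traces of the induced endomorphisms of the graded pieces (pick a basis adapted to the filtration; the matrix of the endomorphism is block upper triangular, so its trace is the sum of the traces of the diagonal blocks). Applying this to $\cdot x$ acting on $A$ with respect to the chosen composition series gives
\begin{eqnarray*}
\mathrm{Tr}_{A/k}(x) = \sum_{j=1}^{N} \mathrm{Tr}_{(A/\pa_j)/k}(x + \pa_j).
\end{eqnarray*}

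Next I would group the terms on the right according to which prime $\pa_j$ equals a fixed $\pa \in \mathrm{Spec}(A)$. The number of indices $j$ with $\pa_j = \pa$ is the multiplicity of the simple module $A/\pa$ in a composition series of $A$, and by the standard fact that such multiplicities may be computed after localization this multiplicity equals $\mathrm{length}_{A_\pa}(A_\pa) = e_\pa$: localizing at $\pa$ kills every subquotient $A/\pa_j$ with $\pa_j \neq \pa$ and turns the remaining ones into the residue field of $A_\pa$, so the localized composition series has length equal to the number of such $j$ on the one hand and to $e_\pa$ on the other. Collecting terms therefore yields $\mathrm{Tr}_{A/k}(x) = \sum_{\pa \in \mathrm{Spec}(A)} e_\pa \cdot \mathrm{Tr}_{(A/\pa)/k}(x+\pa)$, as claimed.

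The only step needing any care is the identification of the multiplicity of $A/\pa$ with $e_\pa$; the rest is formal linear algebra. One can in fact sidestep it: the artinian ring $A$ splits canonically as $\prod_{\pa} A_\pa$ over its finitely many primes, and both sides of the asserted identity are additive with respect to this product, so one reduces to the case where $A$ is local with maximal ideal $\m$. Then every simple subquotient of a composition series of $A$ is $\cong A/\m$ and the series has length $e_\m$ by definition, and the filtration/trace argument above directly gives $\mathrm{Tr}_{A/k}(x) = e_\m \cdot \mathrm{Tr}_{(A/\m)/k}(x + \m)$. I expect the bookkeeping with the filtration to be the main (and rather minor) obstacle; there is no substantive difficulty.
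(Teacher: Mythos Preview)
Your proof is correct. In fact, the alternative you sketch at the end---split $A$ as $\prod_{\pa} A_{\pa}$ and then run the composition-series/trace-additivity argument in each local factor---is exactly the route the paper takes. Your primary approach (take a global composition series first, then group terms by prime and identify the multiplicity of $A/\pa$ with $e_{\pa}$ via localization) is a harmless reordering of the same ingredients; the only extra work it requires is the multiplicity identification, which you handle correctly.
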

\begin{proof}
First suppose that $(A,\pa)$ is a local ring. Since $A$ satisfies the descending and ascending chain conditions, there is a composition series $A=M_0
\supseteq M_1 \supseteq \ldots \supseteq M_{e_{\pa}}=0$ where the $M_i$ are $A$-modules and $M_i/M_{i-1} \cong A/\pa$ (see \cite{AT}, Proposition
6.8). As the trace is additive on exact sequences, we find
\begin{eqnarray*}
\mathrm{Tr}_{A/k}(x) &=& \sum_{i=1}^{e_{\pa}} \mathrm{Tr}_{(M_i/M_{i-1})/k}(x).
\end{eqnarray*}
Since we have isomorphisms $M_i/M_{i-1} \cong A/{\pa}$, all the multiplication maps by $x$ have the same trace. This shows that
$\mathrm{Tr}_{A/k}(x)=e_{\pa} \cdot \mathrm{Tr}_{(A/\pa)/k}(x+\pa)$. 

Now we will do the general case. We know that $A \cong \prod_{\pa \in \mathrm{Spec}(A)} A_{\pa}$ (see \cite{LA}, Exercise 10.9f). As $A/\pa \cong
A_{\pa}/\pa A_{\pa}$ by the natural map, we obtain 
\begin{eqnarray*}
\mathrm{Tr}_{A/k}(x) = \sum_{\pa \in \mathrm{Spec}(A)} e_{\pa} \cdot \mathrm{Tr}_{(A_{\pa}/\pa A_{\pa})/k}(x+\pa A_{\pa}) = \sum_{\pa \in
\mathrm{Spec}(A)} e_{\pa} \cdot \mathrm{Tr}_{(A/\pa)/k}(x+\pa).
\end{eqnarray*} 
\end{proof}

\begin{proof}[Proof of Proposition \ref{weed}]
The first statement is obtained from Lemma
\ref{tracef} and Proposition
3.4 from \cite{LO}.
For the second statement, use the Chinese remainder theorem to see that the nilradical, which is equal to the intersection of all prime ideals, is not
equal to the intersection
of a strict subset of the set of prime ideals. 
For the third statement, notice first of all that if $\mathrm{char}(k)=0$, then $A$ is tame. Suppose that $\pa$ is a wild prime, then we have
$\mathrm{dim}_k(A^{\perp}) \geq \mathrm{dim}_{k}(A_{\pa})$. We have $\mathrm{dim}_k(A_{\pa})=e_{\pa} \cdot \mathrm{dim}_k(A/\pa)$. As $\pa$ is wild,
either $e_{\pa}$ is divisible by $\mathrm{char}(k)$ or $A/\pa$ is an inseparable extension, with degree is divisible by $\mathrm{char}(k)$.
\end{proof}

\begin{lemma} \label{hu}
Suppose that $k \subseteq k' \subseteq A$ where $k'$ is a field. If $A$ is tame over $k$, then $A$ is tame over $k'$.  
\end{lemma}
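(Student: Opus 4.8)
The plan is to use Proposition \ref{weed} to reduce the statement to the individual primes of $A$, and then to check that the two defining conditions for a tame prime are stable under enlarging the base field. First I would record two preliminary observations: since $k \subseteq k' \subseteq A$ with $A$ finite over $k$, we have $\dim_{k'} A = \dim_k A / \dim_k k' < \infty$, so $A$ is a finite $k'$-algebra and it makes sense to ask whether it is tame over $k'$; and $\mathrm{char}(k') = \mathrm{char}(k)$ since $k \subseteq k'$.

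By Proposition \ref{weed}, $A$ is tame over $k$ if and only if every prime $\pa \subset A$ is tame over $k$, and the same equivalence holds with $k$ replaced by $k'$. So it is enough to show that a prime $\pa$ which is tame over $k$ is also tame over $k'$. Fix such a $\pa$ (necessarily maximal, as $A$ is artinian). The number $e_{\pa} = \mathrm{length}_{A_{\pa}}(A_{\pa})$ occurring in the definition of a wild prime depends only on the ring $A$ and not on the base field, so $\mathrm{char}(k') \mid e_{\pa}$ is equivalent to $\mathrm{char}(k) \mid e_{\pa}$; hence the only condition left to verify is that $(A/\pa)/k$ separable implies $(A/\pa)/k'$ separable. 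For this, note that the composite $k' \hookrightarrow A \twoheadrightarrow A/\pa$ is injective, because $k'$ is a field and $1$ maps to a nonzero element, so we obtain a tower of fields $k \subseteq k' \subseteq A/\pa$ with $A/\pa$ finite over $k$. Now any $x \in A/\pa$ is separable over $k$, so its minimal polynomial over $k$ is separable; the minimal polynomial of $x$ over $k'$ divides it and is therefore also separable, whence $x$ is separable over $k'$. Thus $A/\pa$ is separable over $k'$.

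Putting these together, every prime of $A$ that is tame over $k$ is tame over $k'$, so by Proposition \ref{weed} again $A$ is tame over $k'$. I do not expect a genuine obstacle here; the two points that deserve a word are that $e_{\pa}$ is intrinsic to $A$ (clear from its definition as a length of $A_{\pa}$ over itself) and the transitivity of separability in field towers (handled by the minimal-polynomial argument above).
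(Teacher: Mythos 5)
Your proof is correct, but it takes a different route from the paper's. The paper's proof is a one\dash line argument working directly with the definition of tameness via the trace radical: by transitivity of the trace, $\mathrm{Tr}_{A/k} = \mathrm{Tr}_{k'/k} \circ \mathrm{Tr}_{A/k'}$, so the trace radical of $A$ over $k'$ is contained in the trace radical of $A$ over $k$, which by tameness over $k$ equals the nilradical; since the nilradical is always contained in the trace radical (Remark after Definition \ref{eta}), the two coincide over $k'$ as well. You instead invoke Proposition \ref{weed} to reduce to a prime\dash by\dash prime check, and then verify that each of the two defining conditions for a tame prime (separability of the residue field and non\dash divisibility of $e_{\pa}$ by the characteristic) is preserved when passing from $k$ to $k'$. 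Both arguments are sound. The paper's proof is shorter and avoids the structural decomposition entirely; yours spells out concretely \emph{why} tameness persists at the level of individual primes, at the cost of needing the extra ingredients (intrinsicness of $e_{\pa}$, equality of characteristics, transitivity of separability in field towers), each of which you correctly justify.
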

\begin{proof}
 An element in the trace radical with respect to $k'$ will be in the trace radical with respect to $k$, hence will be nilpotent by definition of
tameness. 
\end{proof}

\section{Orders}

In this section let $R$ be a Dedekind domain. 

\begin{theorem} \label{superb}
Let $M$ be a finitely generated $R$-module. Then the following statements are equivalent:
\begin{enumerate}
\item
$M$ is torsion-free;
\item
$M$ is flat;
\item 
$M$ is projective.
\end{enumerate}
\end{theorem}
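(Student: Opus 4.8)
The plan is to prove the cyclic chain of implications (iii) $\Rightarrow$ (ii) $\Rightarrow$ (i) $\Rightarrow$ (iii), of which only the last is substantial. For (iii) $\Rightarrow$ (ii) I would invoke the general fact that a projective module, being a direct summand of a free module, is flat. For (ii) $\Rightarrow$ (i), let $r \in R$ be nonzero; multiplication by $r$ on $R$ is injective because $R$ is a domain, so tensoring the injection $R \to R$, $x \mapsto rx$, with the flat module $M$ shows that multiplication by $r$ is injective on $M$. As $r$ was arbitrary, $M$ is torsion-free.

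The heart of the matter is (i) $\Rightarrow$ (iii), and here I would argue locally. Since $R$ is Noetherian and $M$ is finitely generated, $M$ is finitely presented, so it suffices to show that $M_{\pa}$ is free over $R_{\pa}$ for every prime $\pa$ of $R$, because a finitely presented module that is locally free is projective. For $\pa = 0$ the localization $R_{\pa}$ is the fraction field $K$ of $R$ and $M_{\pa}$ is a finite-dimensional $K$-vector space, hence free. For $\pa \neq 0$ the ring $R_{\pa}$ is a discrete valuation ring, which is one of the standard characterizations of a Dedekind domain; in particular it is a principal ideal domain. The module $M_{\pa}$ is finitely generated, and it is torsion-free: indeed $M$ embeds into $M \otimes_R K$ since $M$ is torsion-free, and this embedding factors through $M_{\pa}$. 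A finitely generated torsion-free module over a principal ideal domain is free, by the structure theorem (or by a short direct induction), so $M_{\pa}$ is free, completing the argument.

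As an alternative to the local-global criterion for projectivity, one could instead argue by induction on the rank $n = \dim_K(M \otimes_R K)$: embedding $M$ into $K^n$ and projecting onto a coordinate on which the image is nonzero gives a short exact sequence $0 \to M' \to M \to I \to 0$ with $M'$ finitely generated torsion-free of rank $n-1$ and $I$ a nonzero finitely generated $R$-submodule of $K$, i.e.\ a fractional ideal. Since fractional ideals of a Dedekind domain are invertible, hence projective, the sequence splits and $M \cong M' \oplus I$; the inductive hypothesis (with base case $M = 0$ when $n = 0$) then finishes the proof, and in fact shows along the way that $M$ is a direct sum of fractional ideals.

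The main obstacle is not any single hard step but rather assembling the correct standard inputs and making sure they are available in the write-up: that nonzero localizations of a Dedekind domain are discrete valuation rings, that finitely generated torsion-free modules over a principal ideal domain are free, and that over a Noetherian ring a finitely generated locally free module is projective (equivalently, the invertibility of fractional ideals, if one takes the inductive route). I would cite the relevant characterization of Dedekind domains and either the module structure theorem over a PID or the invertibility of fractional ideals, and the rest is routine.
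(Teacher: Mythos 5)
Your proof is correct. Note, though, that the paper does not actually supply an argument here: it simply cites \cite{AT}, Exercise 9.5 for i $\Longleftrightarrow$ ii and \cite{MAY}, Theorem 7.2 for i $\Longleftrightarrow$ iii. So you are not really ``taking a different route'' from the paper so much as filling in the references. What you wrote is a sound, self-contained treatment: the cyclic chain iii $\Rightarrow$ ii $\Rightarrow$ i $\Rightarrow$ iii is the efficient organization, the first two implications are the usual generalities (projective $\Rightarrow$ flat; flat over a domain $\Rightarrow$ torsion-free, by tensoring the injection $r\colon R \to R$ with $M$), and the substantial implication i $\Rightarrow$ iii is handled correctly by localization: $R$ Noetherian makes $M$ finitely presented, $R_\pa$ is a field or a DVR, a finitely generated torsion-free module over a PID is free, and a finitely presented locally free module is projective. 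Your alternative inductive argument, splitting off a fractional ideal and using that fractional ideals of a Dedekind domain are invertible hence projective, is equally correct and is closer in spirit to what \cite{MAY} actually does; it has the added benefit of exhibiting $M$ as a direct sum of fractional ideals, which the local-global route does not make explicit. Either argument would be an acceptable replacement for the paper's bare citations.
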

\begin{proof}
i $\Longleftrightarrow$ ii: See \cite{AT}, Exercise 9.5. 

i $\Longleftrightarrow$ iii: See \cite{MAY}, Theorem 7.2. 
\end{proof}

\begin{definition}
Let $T$ be a ring. Let $S=\{ x \in T\ |\ \mathrm{Ann}_T(x)=0 \} \subseteq T$ be the set consisting of elements that are not a zero divisor. Then we
define the \emph{total quotient ring} of $T$ as $Q(T)=S^{-1}T$.
\end{definition}

\begin{theorem} \label{quotient}
Let $T$ be a domain and let $A$ be an $T$-algebra that is torsion-free as $T$-module and integral over $T$. Then $A \otimes_T Q(T)=Q(A)$.
\end{theorem}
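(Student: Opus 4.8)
The plan is to realize both sides as localizations of $A$ and compare them. Since $T$ is a domain, its non-zero-divisors form the set $S := T \setminus \{0\}$, so $Q(T) = S^{-1}T$ is the fraction field of $T$ and hence $A \otimes_T Q(T) = S^{-1}A =: B$. If $A = 0$ the statement is trivial, so assume $A \neq 0$. The goal is then to show $B = Q(A)$.

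First I would use torsion-freeness to note that the image of $S$ in $A$ consists of non-zero-divisors: if $t \in S$ and $ta = 0$ with $a \in A$, then $a$ is a $T$-torsion element, so $a = 0$. Consequently $A \to B$ is injective, and, identifying $S$ with its image in $A$, we have $S \subseteq \{\text{non-zero-divisors of }A\}$; therefore the natural "localize further" map $\psi \colon B = S^{-1}A \to Q(A)$ is defined, and it is injective since an element of $S^{-1}A$ annihilated by a non-zero-divisor of $A$ is already $0$. It then remains to prove that $\psi$ is surjective, which amounts to showing that every non-zero-divisor $s$ of $A$ becomes invertible in $B$: indeed, once that is known, for a non-zero-divisor $s$ and any $a \in A$ the element $a/s \in Q(A)$ is the image under $\psi$ of $a$ times the inverse of $s$ in $B$.

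So the crux is: \emph{if $s \in A$ is a non-zero-divisor, then $s \in B^{\times}$}. Here I would invoke integrality of $A$ over $T$: choose a monic relation $s^n + c_{n-1}s^{n-1} + \cdots + c_1 s + c_0 = 0$ with all $c_i \in T$ and with $n$ minimal. The key sub-claim is that $c_0 \neq 0$. Otherwise $s\bigl(s^{n-1} + c_{n-1}s^{n-2} + \cdots + c_1\bigr) = 0$ in $A$, and cancelling the non-zero-divisor $s$ yields a monic relation of degree $n-1$ when $n \geq 2$ (contradicting minimality of $n$) or forces $s = 0$ when $n = 1$ (impossible, since $A \neq 0$ and $s$ is a non-zero-divisor). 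Given $c_0 \in S$, it is a unit in $B$, and the relation rearranges to $s \cdot \bigl(-c_0^{-1}(s^{n-1} + c_{n-1}s^{n-2} + \cdots + c_1)\bigr) = 1$ in $B$, exhibiting an explicit inverse of $s$. This completes the identification $B = Q(A)$. The only genuine obstacle is the sub-claim $c_0 \neq 0$, which is exactly where the non-zero-divisor hypothesis (together with the reduction to $A \neq 0$) is used; everything else is routine bookkeeping with localizations.
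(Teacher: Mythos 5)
Your proof is correct and follows essentially the same route as the paper: identify $A \otimes_T Q(T)$ with $S^{-1}A$ where $S = T \setminus \{0\}$, use torsion-freeness to embed $S$ in the non-zero-divisors of $A$, and then use an integral equation of minimal degree (whose constant term must be nonzero) to show every non-zero-divisor of $A$ becomes invertible after inverting $S$. The paper phrases this last step as showing the set of non-zero-divisors of $A$ is the saturation of $S$, while you verify directly that the map $S^{-1}A \to Q(A)$ is bijective, but these are the same argument.
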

\begin{proof}
Assume that $A \neq 0$. Then $T=T \cdot 1 \subseteq A$ as $A$ is torsion-free. Now let $S=T \setminus \{0\}$ be the set of nonzero divisors of $T$ and
let $S'$ be the set of nonzero divisors of $A$. Then $S \subseteq S'$ as $A$ is torsion-free. We claim that $S'$ is the saturation of $S$
(see \cite{AT}, Exercise 7). We have to show that for any $x \in S'$, there exists $y \in A$ with $xy \in S$. Let $x \in S'$. As $A$ is integral over
$R$, it follows that $x^n+r_{n-1} x^{n-1}+\ldots+r_0=0$ for some $r_i \in T$. Assume that this relation is of minimal degree. We have $r_0 \neq 0$ as
$x$ is not a zero divisor. But this means that $x(x^{n-1}+r_{n-1}x^{n-2}+\ldots+r_1)=-r_0 \in T \setminus \{0\}=S$. Hence $S^{-1}A=S'^{-1}A$ and we
find (using \cite{AT}, Proposition 3.5)
\begin{eqnarray*}
A \otimes_T Q(T) = A \otimes_T S^{-1}T = S^{-1}A = S'^{-1}A= Q(A).
\end{eqnarray*} 
\end{proof}

\begin{definition} \label{ordero}
Let $R$ be a Dedekind domain. Let $A$ be an $R$-algebra. Then $A$ is called an \emph{order} over $R$ \index{order} if $A$ is finitely generated
torsion-free as an $R$-module and $Q(A)=A \otimes_R Q(R)$ is a finite  \'etale algebra over $Q(R)$.
\end{definition}

\begin{definition} \label{dua}
Let $R$ be a Dedekind domain and let $A$ over $R$. Let $M \subset Q(A)$ be a finitely generated $R$-module. Then we define the \emph{trace dual} of
$M$ to be the $R$-module \index{trace dual} 
\begin{eqnarray*}
M^{\dagger}=\{x \in Q(A): \mathrm{Tr}_{Q(A) /Q(R)}(xM) \subseteq R \}. 
\end{eqnarray*}
\end{definition}

\begin{definition}
Let $A$ be a ring. Then we define the \emph{integral closure} \index{integral closure} of $A$ in $Q(A)$ as 
\begin{eqnarray*}
\overline{A}=\{a \in Q(A): \mathrm{there\ is\ a\ monic\ } f\in A[x]: f(a)=0 \}.
\end{eqnarray*}
\end{definition}

\begin{lemma} \label{longi}
Let $R$ be a Dedekind domain and let $A \neq 0$ be an order over $R$. Then the following all hold:
\begin{enumerate}
\item
$R \subseteq A$ is integral and $\overline{A}$ is the integral closure of $R$ inside $Q(A)$;
\item
every order $B \subset Q(A)$ satisfies $\mathrm{Tr}_{Q(A)/Q(R)}(B) \subseteq R$;
\item
$A \subseteq \overline{A} \subseteq A^{\dagger}$;
\item
$A^{\dagger}$ is a finitely generated $R$-module and $A^{\dagger}/A$ is torsion as an $R$-module;
\item
$\overline{A}$ is the unique maximal element (under inclusion) of the set of orders $B \subseteq Q(A)$. 
\end{enumerate}
\end{lemma}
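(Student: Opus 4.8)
The plan is to reduce all five assertions to one elementary fact, which I would record first: if $z \in Q(A)$ is integral over $R$, then $\mathrm{Tr}_{Q(A)/Q(R)}(z) \in R$. To prove this I would decompose $Q(A) \cong \prod_i L_i$ into a finite product of finite separable field extensions of $Q(R)$ (legitimate since $A$ is an order, so $Q(A)$ is finite \'etale over $Q(R)$), write $z = (z_i)_i$ with each $z_i$ integral over $R$, note $\mathrm{Tr}_{Q(A)/Q(R)}(z) = \sum_i \mathrm{Tr}_{L_i/Q(R)}(z_i)$, observe that each $\mathrm{Tr}_{L_i/Q(R)}(z_i)$ is a sum of $Q(R)$-conjugates of $z_i$ in an algebraic closure of $Q(R)$ and hence is integral over $R$ while lying in $Q(R)$, and conclude using that $R$ is integrally closed because it is Dedekind. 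Granting this: for (i), $A \neq 0$ is torsion-free over $R$, so $R \hookrightarrow A$, and $A$ is a ring finitely generated as an $R$-module, hence integral over $R$, whence transitivity of integral dependence identifies $\overline{A}$ with the integral closure of $R$ in $Q(A)$; for (ii), any order $B \subseteq Q(A)$ is again a ring finitely generated over $R$, hence integral over $R$, so the fact applies to each $b \in B$; for (iii), $A \subseteq \overline{A}$ because each $a \in A$ is a root of the monic $x - a \in A[x]$, and if $x \in \overline{A}$ and $a \in A$ then $xa \in \overline{A}$ is integral over $R$ by (i), so $\mathrm{Tr}_{Q(A)/Q(R)}(xa) \in R$ and thus $x \in A^{\dagger}$.

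For (iv) I would use that the trace form on $Q(A)$ is non-degenerate (finite \'etaleness) to obtain an isomorphism $Q(A) \cong \Hom_{Q(R)}(Q(A), Q(R))$ sending $x$ to the functional $y \mapsto \mathrm{Tr}_{Q(A)/Q(R)}(xy)$. Since $A$ is finitely generated and torsion-free over the Dedekind domain $R$ it is projective by Theorem \ref{superb}, and since $A \otimes_R Q(R) = Q(A)$, restriction of functionals identifies $\Hom_R(A,R)$ with $\{\psi \in \Hom_{Q(R)}(Q(A),Q(R)) : \psi(A) \subseteq R\}$; chasing this set through the isomorphism above shows it is carried onto $A^{\dagger}$, so $A^{\dagger}$ is finitely generated over $R$. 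Clearing denominators of the finitely many values $\mathrm{Tr}_{Q(A)/Q(R)}(xa_i)$ (for $x \in Q(A)$ and $a_1,\dots,a_m$ a generating set of $A$) gives $A^{\dagger} \otimes_R Q(R) = Q(A) = A \otimes_R Q(R)$, and together with $A \subseteq A^{\dagger}$ from (iii) this shows $A^{\dagger}/A$ is $R$-torsion.

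For (v) I would first verify that $\overline{A}$ is itself an order in $Q(A)$: it is torsion-free over $R$ because it lies in the $Q(R)$-vector space $Q(A)$; it is finitely generated over $R$ because it lies between $A$ and $A^{\dagger}$ by (iii), $R$ is Noetherian, and $A^{\dagger}$ is finitely generated by (iv); and its total quotient ring, which by Theorem \ref{quotient} equals $\overline{A} \otimes_R Q(R)$, is squeezed between $A \otimes_R Q(R) = Q(A)$ and $Q(A)$, hence equals $Q(A)$, which is finite \'etale over $Q(R)$. Then, for any order $B \subseteq Q(A)$, every element of $B$ is integral over $R$ (as in (ii)), hence integral over $A$ since $R \subseteq A$, so $B \subseteq \overline{A}$; thus $\overline{A}$ is the greatest element of the set of orders contained in $Q(A)$, in particular its unique maximal element.

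The one step I expect to need genuine care is the identification of $A^{\dagger}$ with $\Hom_R(A,R)$ in (iv): one must check that the $R$-valued trace pairing on $A$ and the $Q(R)$-valued trace pairing on $Q(A)$ are compatible under the non-degeneracy isomorphism, using that $A$ is a full projective $R$-lattice in $Q(A)$. Everything else is bookkeeping once the integrality-of-the-trace fact is in place.
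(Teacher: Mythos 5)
Your proof is correct, and it takes a genuinely different route from the paper's at the key step. The paper proves (ii) by citing a result of Lenstra (the reference \cite{LE}, 4.8) identifying $\mathrm{Tr}_{Q(A)/Q(R)}$ restricted to an order $B$ with the endomorphism trace on $\Hom_R(B,R) \otimes_R B \cong \End_R(B)$, which lands in $R$ by construction; this requires first enlarging $B$ so that $B \otimes_R Q(R) = Q(A)$, a small maneuver the paper passes over quickly. You instead isolate and prove directly the elementary fact that any $z \in Q(A)$ integral over $R$ has trace in $R$, by decomposing $Q(A)$ into separable field extensions of $Q(R)$, writing each component trace as a sum of Galois conjugates (all integral over $R$), and invoking that the Dedekind domain $R$ is integrally closed in $Q(R)$. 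This is more self-contained and sidesteps the enlarging-$B$ step entirely, since integrality of individual elements of $B$ over $R$ is all that is needed. Your treatment of (iii) also differs slightly: the paper deduces $\overline{A} \subseteq A^\dagger$ by observing that $A[x]$ is an order for $x \in \overline{A}$ and applying (ii), whereas you go directly through the integrality of products $xa$; both are fine. For (iv) you sharpen the paper's observation that $A^\dagger \hookrightarrow \Hom_R(A,R)$ into a full identification and supply the denominator-clearing argument that the paper compresses into the remark that $Q(A)/A$ is torsion. Your (v) matches the paper's. In short, yours buys elementarity and self-containment at the price of slightly more hands-on computation; the paper's buys brevity by outsourcing the trace-integrality to a structural result about projective modules.
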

\begin{proof}
i. We have $R \subseteq A$ as $A \neq 0$ is torsion-free. As $A$ is finitely generated over $R$, we can apply Proposition 5.1 from \cite{AT} to see that $R \subseteq A$ is integral. The second statement follows from \cite{AT}, Corollary 5.4. 
 
ii. After enlarging $B$ if necessary, we may assume that $B \otimes Q(R)=Q(A)$. In this case the restriction of $\mathrm{Tr}_{Q(A)/Q(R)}$ to $B$ is
equal to the natural trace map to $R$ on $\mathrm{Hom}_R(B,R) \otimes_R B \cong \mathrm{End}_R(B)$ (see
\cite{LE}, 4.8).

iii, iv. First notice that we have a map $A^{\dagger} \to \mathrm{Hom}_R(A,R)$, that maps $x$ to $(y \mapsto \mathrm{Tr}_{Q(A)/Q(R)}(xy))$, and this
map is injective as $Q(A)$ is an \'etale $Q(R$)-algebra. Notice that $\mathrm{Hom}_R(A,R)$ is a noetherian $R$-module, as $R$ is noetherian and
$\mathrm{Hom}_R(A,R)$ is a finitely generated $R$-module. Hence $A^{\dagger}$ is finitely generated over $R$. Let $x \in \overline{A}$. Then $A[x]$ is
an order, hence \[ \mathrm{Tr}_{Q(A)/Q(R)}(A[x]) \subseteq R \] and $x \in A^{\dagger}$. As $Q(A)/A$ is torsion we obtain iii and iv.  

v. As all orders are integral over $R$, they are contained in the integral closure of $R$ in $Q(A)$, which is just $\overline{A}$. As $\overline{A}
\subseteq A^{\dagger}$ by ii, it follows that $\overline{A}$ is finitely generated and torsion-free. Also $Q(\overline{A})=Q(A)$ and hence
$\overline{A}$ is an order as well.

\end{proof}

\begin{definition} \label{tame2}
Let $R$ be a Dedekind domain and let $A \supseteq R$ be an order over $R$. Let $\pa \subset R$ be a nonzero prime. Then $A$ is said to be \emph{tame} at $\pa$ \index{prime!tame}\index{tame prime} if $A/\pa A$ is tame as an $R/\pa$-algebra. If $A$ is not tame at $\pa$, it is called \emph{wild} at $\pa$.\index{prime!wild}\index{wild prime}
\end{definition}

\begin{example}
Let $K \supset \Q$ be a number field. Let $p \in \Z$ be prime. Then according to the usual definition of tameness, $p$ is called tame if $p \nmid
e(\pa/p)$ for all $\pa \in \mathrm{Spec}(\mathcal{O}_K)$ with $\pa \cap \Z=(p)$. Here $e(\pa/p)$ is defined by $p \mathcal{O}_K=\prod_{\pa \in
\mathrm{Spec}(\mathcal{O}_K): \pa \cap \Z=(p)} \pa^{e(\pa/p)}$. We then find by the Chinese remainder theorem
\begin{eqnarray*}
\mathcal{O}_K/p \mathcal{O}_K &\cong& \prod_{\pa \in \mathrm{Spec}(\mathcal{O}_K): \pa \cap \Z=(p)} \mathcal{O}_K/ \pa^{e(\pa/p)}.
\end{eqnarray*}
From this last expression we deduce that $e_{\pa/p \mathcal{O}_K}=e(\pa/p)$. As $\F_p$ is a perfect field, we see that the two definitions of tameness are the same in this case. 
\end{example}

\section{Orders and localization}
In many of the coming theorems, it is useful to focus on only one prime $\pa \subset R$. This is why we use the notion of localization. 
We have the following lemma, which summarizes the situation. The proof of this lemma follows easily from the properties of localization (see
\cite{AT}).

\begin{lemma} \label{loop}
The following assertions all hold.
\begin{enumerate}
\item $A_{\pa}$ is an order over $R_{\pa}$;
\item $Q(A_{\pa})=Q(A)$;
\item $\left( A^{\dagger} \right)_{\pa}=\left( A_{\pa} \right)^{\dagger}$ and $\left(A^{\dagger}/A \right)_{\pa} \cong \left( A_{\pa}
\right)^{\dagger} / A_{\pa}$ as $R_{\pa}$-modules;
\item $\overline{A_{\pa}}= \overline{A}_{\pa}$ and $\left( \overline{A}/A \right)_{\pa} \cong \overline{A_{\pa}}/A_{\pa}$ as
$R_{\pa}$-modules;
\item $A$ is tame at $\pa$ if and only if $A_{\pa}$ is tame at $\pa R_{\pa}$. 
\end{enumerate}
\end{lemma}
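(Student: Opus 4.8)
The plan is to dispatch the five assertions in order, using throughout that $R_{\pa}$ is again a Dedekind domain (a DVR, since $\pa$ is maximal) with $Q(R_{\pa}) = Q(R)$, and that $R \setminus \pa \subseteq R \setminus \{0\}$, so that localizing at $\pa$ has no effect on $Q(R)$ or on any $Q(R)$-vector space. For (i): localization preserves finite generation, a localization of a torsion-free module over a domain is torsion-free, and $A_{\pa} \otimes_{R_{\pa}} Q(R_{\pa}) = A_{\pa} \otimes_{R_{\pa}} Q(R) \cong A \otimes_R Q(R) = Q(A)$ is finite \'etale over $Q(R)$ by hypothesis, so Definition \ref{ordero} is met. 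For (ii): $A$, and hence $A_{\pa}$, is integral over $R$ by Lemma \ref{longi}(i), so Theorem \ref{quotient} applies to $R_{\pa}$ and the torsion-free integral $R_{\pa}$-algebra $A_{\pa}$ and gives $Q(A_{\pa}) = A_{\pa} \otimes_{R_{\pa}} Q(R_{\pa})$, which by the computation just made equals $Q(A)$.

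For (iii), the point is that since $Q(A_{\pa}) = Q(A)$ and $Q(R_{\pa}) = Q(R)$, the bilinear form defining $(A_{\pa})^{\dagger}$ in Definition \ref{dua} is literally the same $Q(R)$-bilinear trace form as the one defining $A^{\dagger}$. I would then check the two inclusions directly. If $x \in A^{\dagger}$, then $\mathrm{Tr}_{Q(A)/Q(R)}(x A_{\pa}) = R_{\pa} \cdot \mathrm{Tr}_{Q(A)/Q(R)}(xA) \subseteq R_{\pa}$ by $Q(R)$-linearity of the trace, so $A^{\dagger} \subseteq (A_{\pa})^{\dagger}$ and hence $(A^{\dagger})_{\pa} \subseteq (A_{\pa})^{\dagger}$. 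Conversely, fixing a finite $R$-generating set $a_1, \dots, a_n$ of $A$ and taking $x \in (A_{\pa})^{\dagger}$, the finitely many elements $\mathrm{Tr}_{Q(A)/Q(R)}(x a_i) \in R_{\pa}$ share a common denominator $s \in R \setminus \pa$, so $sx \in A^{\dagger}$ and $x \in (A^{\dagger})_{\pa}$. This gives $(A^{\dagger})_{\pa} = (A_{\pa})^{\dagger}$, and $(A^{\dagger}/A)_{\pa} \cong (A_{\pa})^{\dagger}/A_{\pa}$ follows from exactness of localization. (Alternatively one could identify $A^{\dagger}$ with $\mathrm{Hom}_R(A,R)$ inside $\mathrm{Hom}_{Q(R)}(Q(A),Q(R))$ via the nondegenerate trace form, as in the proof of Lemma \ref{longi}, and use that $\mathrm{Hom}$ commutes with localization for a finitely generated module over the noetherian ring $R$; the direct argument avoids this.)

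For (iv), Lemma \ref{longi}(i) identifies $\overline{A}$ with the integral closure of $R$ in $Q(A)$ and, applied to the order $A_{\pa}$ from (i), identifies $\overline{A_{\pa}}$ with the integral closure of $R_{\pa}$ in $Q(A_{\pa}) = Q(A)$. Since integral closure commutes with localization (\cite{AT}) and localizing $Q(A)$ at $R \setminus \pa$ changes nothing, $\overline{A}_{\pa}$ is the integral closure of $R_{\pa}$ in $Q(A)$, i.e. $\overline{A_{\pa}}$; then $(\overline{A}/A)_{\pa} \cong \overline{A_{\pa}}/A_{\pa}$ by exactness. For (v), the key identity is $A_{\pa}/\pa A_{\pa} \cong A/\pa A$ as $R/\pa$-algebras: this follows from $R_{\pa}/\pa R_{\pa} \cong R/\pa$ (localization at the maximal ideal $\pa$) together with right-exactness of $- \otimes_R R_{\pa}$, and the coefficient field $R_{\pa}/\pa R_{\pa}$ of the localized algebra is likewise $R/\pa$; so by Definition \ref{tame2}, $A$ is tame at $\pa$ exactly when $A_{\pa}$ is tame at $\pa R_{\pa}$.

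I do not expect a genuine obstacle; this is precisely the bookkeeping that the phrase ``follows easily from the properties of localization'' is meant to cover. The only points that need a moment's care are in (iii), where one must use the finiteness of $A$ over $R$ together with the $Q(R)$-linearity (not merely $R$-linearity) of the trace to clear denominators, and in (iv), remembering that localizing $Q(A)$ at a subset of $R \setminus \{0\}$ is trivial. I would present (i) and (ii) together in a couple of lines, then (iii), then (iv) and (v).
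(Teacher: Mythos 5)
Your proof is correct and fills in exactly the routine localization bookkeeping that the paper explicitly leaves to the reader (it cites \cite{AT} and says the proof ``follows easily from the properties of localization''). All five parts are handled properly: the key observations you use — that $Q(R_{\pa}) = Q(R)$ and $Q(A_{\pa}) = Q(A)$ so the trace form is literally unchanged, that $A^{\dagger}$ localizes correctly by clearing denominators on a finite generating set, that integral closure commutes with localization, and that $A_{\pa}/\pa A_{\pa} \cong A/\pa A$ over $R_{\pa}/\pa R_{\pa} \cong R/\pa$ — are precisely the intended ones.
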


\begin{lemma} \label{floop}
 We have $\overline{A}/A \cong \bigoplus_{\pa \in \mathrm{MaxSpec}(R)} \left( \overline{A_{\pa}}/A_{\pa} \right)$. 
\end{lemma}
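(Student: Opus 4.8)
The plan is to reduce the global statement to the local results already recorded in Lemma \ref{loop}, using the standard fact that a finitely generated torsion module over a Dedekind domain decomposes as a direct sum of its localizations. First I would observe that by Lemma \ref{longi}(iii,iv), $\overline{A}/A$ is a finitely generated torsion $R$-module. Since $R$ is a Dedekind domain, the module $\overline{A}/A$ is supported on a finite set of maximal ideals, and the canonical map
\begin{eqnarray*}
\overline{A}/A \;\longrightarrow\; \bigoplus_{\pa \in \mathrm{MaxSpec}(R)} \left( \overline{A}/A \right)_{\pa}
\end{eqnarray*}
is an isomorphism of $R$-modules; this is the standard primary decomposition for torsion modules over Dedekind domains (one checks it on each localization, where all but one summand vanish). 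The sum is finite because $\overline{A}/A$ is finitely generated torsion, hence annihilated by a nonzero ideal of $R$, which lies in only finitely many maximal ideals.

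Next I would identify each local factor. By Lemma \ref{loop}(iv) we have a canonical isomorphism of $R_{\pa}$-modules $\left( \overline{A}/A \right)_{\pa} \cong \overline{A_{\pa}}/A_{\pa}$. Substituting this into the direct sum decomposition above yields
\begin{eqnarray*}
\overline{A}/A \;\cong\; \bigoplus_{\pa \in \mathrm{MaxSpec}(R)} \left( \overline{A_{\pa}}/A_{\pa} \right),
\end{eqnarray*}
which is exactly the asserted statement. One should remark that the displayed isomorphisms are compatible: the map $\overline{A}/A \to \overline{A}_{\pa}/A_{\pa}$ is localization, and under the identification $\overline{A}_{\pa} = \overline{A_{\pa}}$ from Lemma \ref{loop}(iv) it becomes the natural projection, so no extra bookkeeping is needed.

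The only point requiring a little care — and the step I expect to be the main (minor) obstacle — is justifying the primary decomposition isomorphism for $\overline{A}/A$. Over a general ring a finitely generated torsion module need not split into its localizations, so one genuinely uses that $R$ is Dedekind (equivalently, that $R/I$ is Artinian for every nonzero ideal $I$, hence a finite product of local rings by the structure theory cited via \cite{LA}, Exercise 10.9f, already invoked in Lemma \ref{tracef}). Concretely: let $I = \mathrm{Ann}_R(\overline{A}/A)$, which is nonzero since the module is torsion and finitely generated; then $\overline{A}/A$ is a module over $R/I$, and $R/I \cong \prod_{\pa \supseteq I} (R/I)_{\pa}$ with the product over the finitely many maximal ideals containing $I$. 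This product decomposition of the ring induces the direct sum decomposition of every $R/I$-module, and $(\overline{A}/A)_{\pa} = 0$ whenever $\pa \not\supseteq I$ (as $I$ then contains a unit of $R_{\pa}$). Everything else is formal.
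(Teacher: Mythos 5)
Your proof is correct and takes essentially the same approach as the paper: the paper observes that $\overline{A}/A$ has finite length (by Lemma \ref{longi}~iv), cites Theorem 2.13 from \cite{EI} for the decomposition $\overline{A}/A \cong \bigoplus_{\pa}(\overline{A}/A)_{\pa}$, and finishes with Lemma \ref{loop}. The only difference is cosmetic: you unpack the decomposition by hand via the annihilator ideal $I$ and the Chinese Remainder Theorem for the Artinian ring $R/I$, whereas the paper delegates that step to the cited theorem (the two arguments are the same thing phrased differently, since ``finitely generated torsion over a Dedekind domain'' is equivalent to ``finite length'' here).
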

\begin{proof}
 The assumptions of Theorem 2.13 of \cite{EI} are satisfied as $\overline{A}/A$ has finite length by Lemma \ref{longi} iv. Hence $\overline{A}/A
\cong 
\bigoplus_{\pa \in \mathrm{MaxSpec}(R)} \left( \overline{A}/A\right)_{\pa}$. Now use Lemma \ref{loop}.
\end{proof}

The strength of the previous lemma is that it suffices to find the integral closure locally, and
glue those local parts together to get the global integral closure. 

Assume in the rest of the
article that $R$ is local with maximal ideal $\pa$, that is, $R$ is a discrete valuation ring with
maximal ideal $\pa=(\pi)$, unless stated otherwise explicitly.  

\section{Orders and completion}

Some proofs become a lot clearer if our order $A$ is also local. This is one of the reasons why we use completions. Later we will see another
reason for using completions. Recall that $R$ is assumed to be a
discrete valuation ring with maximal ideal $\pa$ and that $A$ is an order over $R$. Let $\hat{R}$ be the completion
of $R$ with respect to its unique maximal ideal (see \cite{AT}, Chapter 10). Then $\hat{R}$ is a complete discrete valuation ring with maximal ideal
$\pa \hat{R}$. 
We have the following lemma, which shows that completion behaves nicely with respect to the
integral closure, trace duals and other things. The proof is routine and left to the reader. The reader
who wants to see the proofs can look at \cite{KO1}.

\begin{lemma} \label{cheat}
The following statements hold.
\begin{enumerate}
\item
The natural map $Q(R)/R \to Q(\hat{R})/\hat{R}$ is an
isomorphism.
\item

$A \otimes_R \hat{R}$ is an order over $R$;
\item
$\overline{A \otimes_R \hat{R}}=\overline{A} \otimes_{R} \hat{R}$; 
\item
$\overline{A}/A \cong \overline{A \otimes_R \hat{R}} / A \otimes_{R} \hat{R}$ as $\hat{R}$-modules by the natural map;
\item
$A^{\dagger} \otimes_{R} \hat{R} \cong \left( A \otimes_R \hat{R} \right)^{\dagger}$.
\item
$A^{\dagger}/A \cong \left( A \otimes_R \hat{R} \right)^{\dagger}/ A \otimes_R \hat{R}$ as $\hat{R}$-modules by the natural map;
\item
We have the following commutative diagram where the vertical maps are the natural maps and the horizontal maps look like $(\overline{x},\overline{y})
\mapsto \overline{\mathrm{Tr}(xy)}$ for the trace map on $Q(A \otimes_R \hat{R})$ respectively $Q(A)$:
\[
\xymatrix{
\left( A \otimes_R \hat{R} \right)^{\dagger}/A \otimes_R \hat{R} \times \left( A \otimes_R \hat{R} \right)^{\dagger}/A \otimes_R \hat{R} \ar[r] &
Q(\hat{R})/\hat{R} \\
A^{\dagger}/A  \times A^{\dagger}/A \ar[r] \ar[u] & Q(R)/R. \ar[u]
}  
\]
\item
The order $A$ is tame at $\pa$ if and only if $A \otimes_{R} \hat{R}$ is tame at $\pa \hat{R}$. 
\end{enumerate}
\end{lemma}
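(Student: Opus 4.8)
The lemma is a sequence of flat-base-change verifications along $R\to\hat R$. I would begin by isolating the two ingredients used throughout: $\hat R$ is a faithfully flat $R$-algebra, and for a finitely generated $R$-module $M$ the natural map identifies $M\otimes_R\hat R$ with the $\pa$-adic completion $\hat M$; in particular the natural maps $R/\pa^n\to\hat R/\pa^n\hat R$ are isomorphisms, and every $R$-module of finite length is already $\pa$-adically complete, so for such a module the natural map $M\to M\otimes_R\hat R$ is an isomorphism.

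Granting this, (i) follows by writing $Q(R)/R=\bigcup_{n\ge 1}\pi^{-n}R/R$ with $\pi^{-n}R/R\cong R/\pa^n$, doing the same over $\hat R$, and noting the natural map is an isomorphism at each finite stage. For (ii) --- where the printed ``order over $R$'' should read ``order over $\hat R$'' --- I would check in turn: $A\otimes_R\hat R$ is finitely generated over $\hat R$; it is torsion-free over $\hat R$ because $A$ is projective over $R$ by Theorem \ref{superb}; it is integral over $\hat R$ because $A$ is integral over $R$ by Lemma \ref{longi}(i); and, by Theorem \ref{quotient}, $Q(A\otimes_R\hat R)=(A\otimes_R\hat R)\otimes_{\hat R}Q(\hat R)=Q(A)\otimes_{Q(R)}Q(\hat R)$, which is finite \'etale over $Q(\hat R)$ since finite \'etaleness is stable under base change. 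The same computation gives $Q(A\otimes_R\hat R)=Q(A)\otimes_R\hat R$, which is used below.

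For (iii) the inclusion $\overline A\otimes_R\hat R\subseteq\overline{A\otimes_R\hat R}$ is immediate: by flatness $\overline A\otimes_R\hat R$ embeds into $Q(A\otimes_R\hat R)$, and it is integral over $\hat R$. For the reverse inclusion I would use the structure of $\overline A$: since $Q(A)=\prod_i K_i$ with the $K_i$ finite separable over $Q(R)$ (Definitions \ref{ordero}, \ref{eta}), one has $\overline A=\prod_i\mathcal{O}_i$ with $\mathcal{O}_i$ the integral closure of $R$ in $K_i$, a Dedekind domain module-finite over $R$ by separability; then $\overline A\otimes_R\hat R$ is a finite product of completed localizations of the $\mathcal{O}_i$ at primes above $\pa$, each a complete discrete valuation ring, hence integrally closed in its fraction field, so the product is integrally closed in its total quotient ring $Q(A\otimes_R\hat R)$; being integral over $\hat R$ and containing $A\otimes_R\hat R$ it therefore contains $\overline{A\otimes_R\hat R}$ (Lemma \ref{longi}(i) applied to the order $A\otimes_R\hat R$). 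Tensoring $0\to A\to\overline A\to\overline A/A\to 0$ with $\hat R$ and using (iii) identifies $\overline{A\otimes_R\hat R}/(A\otimes_R\hat R)$ with $(\overline A/A)\otimes_R\hat R$, and $\overline A/A$ has finite length over the discrete valuation ring $R$ (Lemma \ref{longi}(iii),(iv)), so this equals $\overline A/A$; this gives (iv). For (v), I would first note that the injection $A^\dagger\hookrightarrow\Hom_R(A,R)$ of Lemma \ref{longi} is in fact onto, because under the trace isomorphism $Q(A)\cong\Hom_{Q(R)}(Q(A),Q(R))$ the submodule $\Hom_R(A,R)$ corresponds precisely to $\{x:\mathrm{Tr}(xA)\subseteq R\}=A^\dagger$; hence $A^\dagger\otimes_R\hat R\cong\Hom_R(A,R)\otimes_R\hat R\cong\Hom_{\hat R}(A\otimes_R\hat R,\hat R)\cong(A\otimes_R\hat R)^\dagger$, using that $A$ is finitely presented, that $\hat R$ is flat, and the same trace isomorphism over $\hat R$; checking this composite is the natural map into $Q(A\otimes_R\hat R)$ gives (v), and (vi) follows from (v) by the finite-length argument used for (iv).

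Finally, (vii) is the compatibility of trace with base change, $\mathrm{Tr}_{Q(A)\otimes_{Q(R)}Q(\hat R)/Q(\hat R)}(x\otimes 1)=\mathrm{Tr}_{Q(A)/Q(R)}(x)$ for $x\in Q(A)$; reducing modulo $A$, $A\otimes_R\hat R$, $R$, $\hat R$ and invoking the isomorphisms of (i) and (vi) gives the commutative square. And (viii) is immediate from $(A\otimes_R\hat R)/\pa\hat R(A\otimes_R\hat R)\cong A\otimes_R(\hat R/\pa\hat R)\cong A\otimes_R(R/\pa)\cong A/\pa A$ as $R/\pa$-algebras, so the two tameness conditions are verbatim the same. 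Of the whole list, the only step that is not pure bookkeeping is the reverse inclusion in (iii): it is there that the separability built into the definition of an order, together with the normality of completions of Dedekind domains at their maximal ideals, is genuinely needed (alternatively one could invoke excellence of the discrete valuation ring $R$, but the argument via complete discrete valuation rings is self-contained).
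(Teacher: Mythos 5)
The paper does not actually print a proof of Lemma \ref{cheat} --- it declares the proof ``routine'' and defers to \cite{KO1} --- so there is no internal argument to compare against; but the flat-base-change route you take is clearly the intended one, and on checking I find it correct and complete. You rightly flag the typo in (ii) (it should read ``order over $\hat R$''), and the delicate points are handled properly: in (iii) you need, and supply, the reverse inclusion via the product decomposition $\overline A=\prod_i\mathcal O_i$ into Dedekind domains finite over $R$, the identification $\mathcal O_i\otimes_R\hat R\cong\prod_j\widehat{\mathcal O_{i,\qa_{ij}}}$ as a finite product of complete discrete valuation rings, and the fact that such a product is integrally closed in its total quotient ring; in (v) you correctly upgrade the injection $A^\dagger\hookrightarrow\Hom_R(A,R)$ of Lemma \ref{longi} to an isomorphism before applying the finite-presentation-and-flatness commutation of $\Hom$ with base change; and (iv), (vi) reduce to (iii), (v) via the observation that $\overline A/A$ and $A^\dagger/A$ have finite length, hence are unchanged by $-\otimes_R\hat R$. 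Part (viii) is exactly the algebra isomorphism $(A\otimes_R\hat R)/\pa\hat R(A\otimes_R\hat R)\cong A/\pa A$ over $R/\pa$, as you say. The proof is sound.
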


The reason to use this completion is the following theorem. 

\begin{theorem} \label{ks}
Let $A$ be an order over a complete discrete valuation ring $R$. Then the order $A$ has only finitely many maximal ideals and the localization
$A_{\m}$ at a maximal ideal $\m \subset R$ is a local order over $R$, which is complete with respect to its maximal ideal. Furthermore we have an
isomorphism $A \cong \prod_{\m \in \mathrm{Maxspec}(A)} A_{\m}$ as rings by the natural map. 
\end{theorem}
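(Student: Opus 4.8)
The plan is to exploit that $A$, being a finitely generated module over the complete discrete valuation ring $R$, is a complete semilocal ring, and then to run the standard structure argument for such rings. We may assume $A\neq 0$; then $R\subseteq A$, the extension $R\subseteq A$ is integral (Lemma~\ref{longi}.i), and $A$ is Noetherian, being a finitely generated module over the Noetherian ring $R$. First I would show $\mathrm{MaxSpec}(A)$ is finite: for a maximal ideal $\m\subset A$ the field $A/\m$ is integral over the domain $R/(\m\cap R)$, which is thereby forced to be a field, so $\m\cap R=\pa$ and $\pa A\subseteq\m$. Hence the maximal ideals of $A$ are exactly those of the finite\dash dimensional $R/\pa$-algebra $A/\pa A$, which is Artinian, so there are finitely many, say $\m_1,\dots,\m_r$.

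Next I would pin down the relevant topology. Set $J=\m_1\cap\dots\cap\m_r$; since the $\m_i$ are pairwise comaximal, $J^n=\m_1^n\cap\dots\cap\m_r^n$ for all $n$. One has $\pa A\subseteq J$, while $J/\pa A$ is the Jacobson radical of the Artinian ring $A/\pa A$, hence nilpotent, so $J^n\subseteq\pa A$ for some $n$; thus the $J$-adic and $\pa A$-adic topologies on $A$ coincide. Because $R$ is $\pa$-adically complete and $A$ is a finitely generated $R$-module, $A$ is $\pa A$-adically complete (\cite{AT}, Chapter~10), so $A\cong\varprojlim_n A/J^n$. By the Chinese remainder theorem $A/J^n\cong\prod_{i=1}^r A/\m_i^n$; the idempotent in $A/J^n$ that is $1$ in the $i$-th factor and $0$ elsewhere is compatible with the transition maps, so these lift to orthogonal idempotents $e_1,\dots,e_r\in A$ with $\sum_i e_i=1$, giving a ring decomposition $A\cong\prod_{i=1}^r e_iA$. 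Under it $\m_i$ is the whole factor in every coordinate except the $i$-th, where it is a maximal ideal of $e_iA$, and it is the only one (a maximal ideal of $A$ contains exactly one of $e_i,1-e_i$, and the one containing $1-e_i$ must be $\m_i$); so $e_iA$ is local, and taking inverse limits coordinatewise shows $e_iA$ is complete with respect to its maximal ideal. Finally, for $j\neq i$ one has $1-e_j\notin\m_i$, so $1-e_j$ is a unit in $A_{\m_i}$ while $e_j(1-e_j)=0$ forces $e_j=0$ there; hence $A\to A_{\m_i}$ factors through $e_iA$ and induces an isomorphism $A_{\m_i}\cong e_iA$ (localizing the already\dash local ring $e_iA$ at its maximal ideal changes nothing). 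This yields $A\cong\prod_{\m\in\mathrm{MaxSpec}(A)}A_\m$.

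It remains to check that each $A_\m\cong eA$ is a local order over $R$: as a direct summand of the finitely generated torsion-free $R$-module $A$ it is itself finitely generated and torsion-free over $R$; being integral over the domain $R$, Theorem~\ref{quotient} gives $Q(A_\m)=A_\m\otimes_R Q(R)$; and $A_\m\otimes_R Q(R)$ is a direct factor of $A\otimes_R Q(R)=Q(A)$, which is a finite product of finite separable extensions of $Q(R)$, hence so is $A_\m\otimes_R Q(R)$, i.e.\ it is finite \'etale over $Q(R)$ (Definition~\ref{eta}); locality and completeness of $A_\m$ were obtained above. The one genuinely delicate point is the passage from $\pa A$-adic completeness of $A$ to the product decomposition — lifting the orthogonal idempotents through the tower $\{A/J^n\}_n$ and verifying that the resulting factors are precisely the localizations $A_{\m_i}$; everything else is routine checking against the definition of an order together with standard properties of localization and completion.
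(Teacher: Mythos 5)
Your proof is correct, but it takes a more self-contained route than the paper. The paper simply invokes Corollary~7.6 of Eisenbud as a black box to obtain the finiteness of $\mathrm{MaxSpec}(A)$, the completeness and locality of the $A_{\m}$, and the product decomposition $A\cong\prod_{\m}A_{\m}$; it then verifies that each factor is an order by noting that $A_{\m}$ is a projective direct summand of $A$ over $R$ and that $\Delta(Q(A)/Q(R))=\prod_{\m}\Delta(Q(A_{\m})/Q(R))\neq 0$ forces each local discriminant to be nonzero. You instead reprove the content of Eisenbud's corollary from scratch: you reduce to the Artinian ring $A/\pa A$ to see that there are only finitely many maximal ideals and that they all lie over $\pa$, identify the $J$-adic and $\pa A$-adic topologies, use $\pa$-adic completeness of the finitely generated $R$-module $A$ together with the Chinese remainder theorem to lift orthogonal idempotents and obtain $A\cong\prod_i e_iA$, and then identify $e_iA$ with $A_{\m_i}$ by showing the other idempotents die in the localization. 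For the \'etale condition you appeal to the characterization of finite \'etale algebras as finite products of finite separable field extensions (Definition~\ref{eta}) and observe that a ring direct factor of such a product is again such a product, rather than using the discriminant. Both arguments are sound; yours is longer but makes the structure theory explicit and stays within tools already present in the paper (Theorem~\ref{quotient}, Definition~\ref{eta}, idempotent lifting), while the paper's is shorter at the cost of an external citation and the multiplicativity-of-discriminants trick. The one implicit assumption in both treatments is that $A\neq 0$ and that each factor $e_iA$ is nonzero, which is automatic since each $\m_i$ is an honest maximal ideal.
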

\begin{proof}
Corollary 7.6 from \cite{EI} tells us that there are only finitely maximal ideals and the localization $A_{\m}$ at a maximal ideal $\m \subset R$ is a
complete local ring which is finite over $R$, and $A \cong \prod_{\m \in \mathrm{Maxspec}(A)} A_{\m}$. As $A$ is projective over $R$ and direct
summands of projective modules are projective, it follows that the $A_{\m}$ are also projective over $R$. Now notice that $A \otimes_R Q(R)= \prod_{\m
\in \mathrm{Maxspec}(A)} \left( A_{\m} \otimes_R Q(R) \right)$ and hence \[ \Delta(Q(A)/Q(R))=\prod_{\m \in \mathrm{Maxspec}(A)}
\Delta(Q(A_{\m})/Q(R)).\] As $\Delta(Q(A)/Q(R)) \neq 0$, it follows that $\Delta(Q(A_{\m})/Q(R)) \neq 0$, which shows that these $A_{\m}$ are orders
over $R$.
\end{proof}

\begin{lemma} \label{mars}
Let $A \subseteq A'$ be an order over a complete discrete valuation ring $R$ with the same total quotient ring. Let $\m \subset A$ be maximal. Then
$A'_{\m}$ is an order over $R$ and $A_{\m} \subseteq A'_{\m} \subseteq Q(A_{\m})$. Furthermore, we have $A'_{\m} = \prod_{\m' \supseteq \m A'}
A'_{\m'}$ and $A'=\prod_{\m \in \mathrm{Maxspec}(A)} A'_{\m}$.
\end{lemma}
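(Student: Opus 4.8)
The plan is to reduce everything to the structure theorem for orders over a complete discrete valuation ring (Theorem~\ref{ks}), applied to both $A$ and $A'$, and to match the $A$-module localization of $A'$ with the idempotent decomposition of the ring $A'$. First I would record the global picture. Theorem~\ref{ks} applied to $A$ gives $A \cong \prod_{\m_1 \in \mathrm{Maxspec}(A)} A_{\m_1}$ with each factor a complete local order over $R$; write $e_\m \in A$ for the idempotent cutting out the factor $A_\m$, so $e_\m A = A_\m$. Theorem~\ref{ks} applied to $A'$ gives $A' \cong \prod_{\m' \in \mathrm{Maxspec}(A')} A'_{\m'}$ with each $A'_{\m'}$ a complete local order over $R$. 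Since $R \subseteq A \subseteq A'$ with $A,A'$ orders, Lemma~\ref{longi}(i) shows $A'$ is integral over $R$, hence over $A$; by the standard behaviour of maximal ideals under integral extensions, every $\m' \in \mathrm{Maxspec}(A')$ contracts to a maximal ideal of $A$, and for $\m \in \mathrm{Maxspec}(A)$ one has $\m' \cap A = \m$ if and only if $\m' \supseteq \m A'$. Thus $\mathrm{Maxspec}(A')$ is the disjoint union, over $\m \in \mathrm{Maxspec}(A)$, of the sets $\{\m' : \m' \supseteq \m A'\}$.

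Next I would identify the localization $A'_\m = A' \otimes_A A_\m$ with the summand $e_\m A'$: since $A$ is a finite product of local rings, localizing any $A$-module at $\m$ just extracts its $e_\m$-part, so $A'_\m = e_\m A'$. Working in $A' = \prod_{\m'} A'_{\m'}$, the image of $e_\m$ in the local ring $A'_{\m'}$ is an idempotent, hence $0$ or $1$, and it is a unit exactly when $e_\m \notin \m'$, i.e. when $\m' \cap A = \m$; so $e_\m$ is $1$ in the factors with $\m' \supseteq \m A'$ and $0$ in all others. Therefore $A'_\m = e_\m A' = \prod_{\m' \supseteq \m A'} A'_{\m'}$. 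This is finitely generated and torsion-free over $R$ and integral over $R$, so by Theorem~\ref{quotient} $Q(A'_\m) = A'_\m \otimes_R Q(R)$, a direct factor of the finite étale algebra $Q(A')$, hence finite étale; thus $A'_\m$ is an order over $R$. This proves the first and third assertions, and multiplying these identities over all $\m \in \mathrm{Maxspec}(A)$ and using the disjoint-union decomposition above gives $A' = \prod_{\m \in \mathrm{Maxspec}(A)} A'_\m$.

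It remains to place $A'_\m$ between $A_\m$ and $Q(A_\m)$. Applying $e_\m \cdot (-)$ to the inclusion $A \subseteq A'$ (equivalently, localization is exact) gives $A_\m = e_\m A \subseteq e_\m A' = A'_\m$. For the upper bound, note $Q(A) = A \otimes_R Q(R) = \prod_\m (A_\m \otimes_R Q(R)) = \prod_\m Q(A_\m)$, with $e_\m$ cutting out the factor $Q(A_\m)$; since $A$ and $A'$ have the same total quotient ring, $A' \subseteq Q(A') = Q(A)$, and applying $e_\m \cdot (-)$ yields $A'_\m = e_\m A' \subseteq e_\m Q(A) = Q(A_\m)$.

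The argument is essentially bookkeeping, and I do not expect a genuine obstacle; the only points demanding care are the matching of the two notions of ``localization at $\m$'' (the $A$-module localization of $A'$ versus the ring-theoretic idempotent decomposition), and the fact that contraction along the integral extension $A \subseteq A'$ carries maximal ideals to maximal ideals, which is exactly where Lemma~\ref{longi}(i) enters.
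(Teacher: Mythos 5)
Your proof is correct and follows essentially the same approach as the paper: both decompose $A'$ (or $A'_\m$) using the complete-local structure theorem (Theorem~\ref{ks}, i.e.\ Eisenbud's Corollary 7.6), and both identify the factors $A'_{\m'}$ that constitute $A'_\m$ by matching maximal ideals of $A'$ lying over $\m$. The only cosmetic difference is that you make the idempotent decomposition of $A=\prod_\m A_\m$ explicit and push the idempotents $e_\m$ through to $A'$ and $Q(A)$, whereas the paper phrases the same facts via localization of $A$-modules and a chain of tensor-product identifications; these are interchangeable.
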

\begin{proof}
 Notice that $A'_{\m}=A' \otimes_A A_{\m}$ is still a finite $R$-algebra as $A' \otimes_R A_{\m}$ is. One applies Corollary 7.6 from \cite{EI} to see
that $A'_{\m} = \prod_{\m' \supseteq \m A'} A'_{\m'}$, and as the $A'_{\m'}$ are orders by Theorem \ref{ks}, it follows that $A'_{\m}$ is an order.
Now
notice that $A_{\m} \subseteq A'_{\m}=A' \otimes_A A_{\m} \subseteq Q(A) \otimes_A A_{\m}=Q(R) \otimes_R A \otimes_A A_{\m}=Q(R) \otimes
A_{\m}=Q(A_{\m})$ as required. The last statement follows from theorem \ref{ks}:
\begin{eqnarray*}
 A'=\prod_{\m' \in \mathrm{Maxspec}(A')} A'_{\m'}=\prod_{\m \in \mathrm{Maxspec}(A)} A'_{\m}.
\end{eqnarray*}
\end{proof}

\section{Going local directly}

In this section let $(R,\pa)$ be a discrete valuation ring and let $A$ be an order over $R$. 

\begin{lemma} \label{cheata}
We have $A \otimes_R \hat{R} \cong \prod_{\m \supseteq \pa A} \hat{A_{\m}}$ and the $\hat{A_{\m}}$ are local orders over $\hat{R}$ which are
complete with respect to its maximal ideal. 
\end{lemma}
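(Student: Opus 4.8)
The plan is to combine the completion lemma (Lemma~\ref{cheat}) with the structure theorem for orders over complete DVRs (Theorem~\ref{ks}), noting that completion commutes with the relevant finite products.

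First I would observe that by Lemma~\ref{cheat}(ii), $A \otimes_R \hat R$ is an order over $\hat R$, and $\hat R$ is a complete discrete valuation ring. Hence Theorem~\ref{ks} applies directly to $A \otimes_R \hat R$: it has only finitely many maximal ideals, it decomposes as a finite product of its localizations at those maximal ideals, and each such localization is a local order over $\hat R$, complete with respect to its maximal ideal. So the real content is to identify the maximal ideals of $A \otimes_R \hat R$ with the set $\{\m \subset A : \m \supseteq \pa A\}$ (equivalently, the maximal ideals of $A/\pa A$), and to identify the corresponding factors with $\hat{A_\m}$.

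For the identification of maximal ideals: the maximal ideals of $A \otimes_R \hat R$ all lie over $\pa\hat R$ (since $\hat R$ is local and the extension is finite, hence integral), so they correspond to the maximal ideals of $(A \otimes_R \hat R)/\pa\hat R(A\otimes_R\hat R) \cong (A/\pa A)\otimes_{R/\pa}\hat R/\pa\hat R$. But $\hat R/\pa \hat R \cong R/\pa$ canonically, so this ring is just $A/\pa A$, whose maximal ideals are exactly the $\m \supseteq \pa A$. For the identification of factors: $A/\pa A$ is already a finite product of local artinian rings $\prod_{\m \supseteq \pa A} (A/\pa A)_\m = \prod_{\m}(A_\m/\pa A_\m)$ by the Chinese remainder theorem (or by \cite{LA}, Exercise 10.9f, as used in Lemma~\ref{tracef}), and this idempotent decomposition lifts to $A \otimes_R \hat R$ because $\pa \hat R$-adic completion is $\pa$-adically complete and idempotents lift along the surjection $A\otimes_R\hat R \twoheadrightarrow A/\pa A$. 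Tracking the factor corresponding to $\m$, one gets $(A\otimes_R\hat R)_\m = A_\m \otimes_R \hat R = A_\m \otimes_{R_\pa}\hat{R_\pa}$, which is the $\pa$-adic completion of the finite $R_\pa$-module $A_\m$ — and since $A_\m$ is already $\m$-adically separated with $\m$ containing $\pa A_\m$, this completion is $\hat{A_\m}$. That each $\hat{A_\m}$ is then a local complete order over $\hat R$ is precisely what Theorem~\ref{ks} delivers for the factors of $A\otimes_R\hat R$.

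The main obstacle — though it is a mild one — is the bookkeeping identifying $(A\otimes_R\hat R)_\m$ with the completion $\hat{A_\m}$ rather than just with some abstract factor of the product in Theorem~\ref{ks}; one must check that the $\m$-adic and $\pa$-adic topologies on $A_\m$ induce the same completion, which follows since $\pa A_\m$ is $\m$-primary (as $A_\m$ is a local ring finite over the one-dimensional $R_\pa$, so $A_\m/\pa A_\m$ is artinian). Everything else is a formal consequence of Lemma~\ref{cheat}, Theorem~\ref{ks}, and the fact that finite products are preserved under the flat base change $-\otimes_R\hat R$.
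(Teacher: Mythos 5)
There is a genuine gap in the identification of the factors with $\hat{A_\m}$. You write that $(A\otimes_R\hat R)_\m = A_\m\otimes_R\hat R$ is ``the $\pa$-adic completion of the finite $R_\pa$-module $A_\m$,'' but $A_\m$ is \emph{not} a finite $R$-module in general. Here $A_\m$ is the localization of $A$ at a maximal ideal $\m$ of $A$, not at a prime of $R$; when $A$ has several maximal ideals above $\pa$, this localization inverts elements of $A$ lying in the other maximal ideals, which destroys finiteness over $R$. Concretely, take $R=\Z_{(p)}$ and $A=\Z_{(p)}[\sqrt d]$ with $p$ split, say $pA=\m_1\m_2$. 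Then $A_{\m_1}$ contains $1/s^n$ for every $s\in\m_2\setminus\m_1$ and every $n$, so it has unbounded denominators at $\m_2$ and is not a finitely generated $\Z_{(p)}$-module. Consequently $A_\m\otimes_R\hat R$ is \emph{not} the $\pa$-adic completion of $A_\m$: in the same example $A\otimes_R\hat R\cong\Z_p\times\Z_p$, so $A_{\m_1}\otimes_R\hat R\cong\Z_p\times\Q_p$, whereas $\hat{A_{\m_1}}\cong\Z_p$. The two also differ from the factor $(A\otimes_R\hat R)_{\m'}$ produced by Theorem~\ref{ks}, which inverts strictly more than $A\setminus\m$.

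The fix is to track the factor through the finite levels rather than via base change. If $B_\m$ denotes the factor of $A\otimes_R\hat R=\varprojlim_n A/\pa^nA$ corresponding to $\m$ (which exists by your idempotent-lifting argument, applied compatibly at each level $n$), then $B_\m/\pa^nB_\m\cong (A/\pa^nA)_\m\cong A_\m/\pa^nA_\m$ for every $n$, whence $B_\m=\varprojlim_n A_\m/\pa^nA_\m$ is the $\pa$-adic completion of $A_\m$. Your observation that $\pa A_\m$ is $\m$-primary then shows the $\pa$-adic and $\m$-adic topologies on $A_\m$ coincide, so $B_\m\cong\hat{A_\m}$. This is exactly the route the paper takes, computing $\varprojlim_n A/\pa^nA$ directly with the Chinese Remainder Theorem at each level $n$ using $\pa A\supseteq\prod_\m\m^s$; your overall plan (Lemma~\ref{cheat} plus Theorem~\ref{ks}, then identify the factors) is sound, but the identification step needs the inverse-limit argument, not the tensor identity $M\otimes_R\hat R=\hat M$, which requires finite generation.
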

\begin{proof}
As $A/\pa A$ is an artinian ring, we can write $\pa A \supseteq \prod_{\m \supseteq \pa A} \m^s$ for some fixed $s$ (see \cite{AT}, Chapter
8). But then
we have
by the Chinese remainer theorem
\begin{eqnarray*}
 A \otimes_R \hat{R} &=& \underset{\underset{i}{\leftarrow}}{\lim} A/\pa^i A = \underset{\underset{i}{\leftarrow}}{\lim} \prod_{\m \supseteq \pa A}
A/\m^{si}= \prod_{\m \supseteq
\pa A} \underset{\underset{i}{\leftarrow}}{\lim} A/\m^{i} \\
&=& \prod_{\m \supseteq \pa A} \underset{\underset{i}{\leftarrow}}{\lim} A_{\m}/\m^{i}A_{\m}= \prod_{\m \supseteq \pa A} \hat{A_{\m}}.
\end{eqnarray*}
Hence by Theorem \ref{ks} we see that $\hat{A_{\m}}$ is a local order over $\hat{R}$ which is complete with respect to its
maximal ideals.
\end{proof}

\begin{lemma} \label{sor}
 Let $\m$ be a maximal ideal of $A$. Then $\left( A^{\dagger}/A \right)_{\m} \cong \hat{A_{\m}}^{\dagger}/\hat{A_{\m}}$ as $A$-modules.
\end{lemma}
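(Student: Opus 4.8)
The plan is to reduce the statement to the combination of Lemma \ref{cheata} and the completion compatibilities in Lemma \ref{cheat}. First I would observe that $\left(A^{\dagger}/A\right)_{\m}$ is already a module over $\hat{A_{\m}}$: indeed $A^{\dagger}/A$ has finite length over $R$ by Lemma \ref{longi} iv, so the localization $\left(A^{\dagger}/A\right)_{\m}$ is a finite-length $A_{\m}$-module, hence annihilated by a power of $\m A_{\m}$, hence canonically an $\hat{A_{\m}}$-module; nothing is lost by completing. So it is enough to identify $\left(A^{\dagger}/A\right)_{\m}$ with the $\m$-primary part of $\left(A\otimes_R\hat{R}\right)^{\dagger}/\left(A\otimes_R\hat{R}\right)$ and then recognize the latter as $\hat{A_{\m}}^{\dagger}/\hat{A_{\m}}$.

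The key steps, in order, are as follows. (1) By Lemma \ref{cheat} vi we have $A^{\dagger}/A\cong \left(A\otimes_R\hat{R}\right)^{\dagger}/\left(A\otimes_R\hat{R}\right)$ as $\hat{R}$-modules by the natural map. (2) By Lemma \ref{cheata}, $A\otimes_R\hat{R}\cong\prod_{\m\supseteq\pa A}\hat{A_{\m}}$, so the total quotient ring splits as $Q(A\otimes_R\hat{R})\cong\prod_{\m}Q(\hat{A_{\m}})$, the trace map decomposes as the product of the traces $Q(\hat{A_{\m}})\to Q(\hat{R})$, and consequently the trace dual decomposes: $\left(A\otimes_R\hat{R}\right)^{\dagger}\cong\prod_{\m}\hat{A_{\m}}^{\dagger}$, where $\hat{A_{\m}}^{\dagger}$ is the trace dual of $\hat{A_{\m}}$ inside $Q(\hat{A_{\m}})$. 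Taking the quotient, $\left(A\otimes_R\hat{R}\right)^{\dagger}/\left(A\otimes_R\hat{R}\right)\cong\prod_{\m}\hat{A_{\m}}^{\dagger}/\hat{A_{\m}}$. (3) On the other hand, by Lemma \ref{loop} iii applied to $A$, $\left(A^{\dagger}/A\right)_{\m}\cong\left(A_{\m}\right)^{\dagger}/A_{\m}$, and localization commutes with finite products indexed by the maximal ideals, so the global module $A^{\dagger}/A$ decomposes (via Lemma \ref{floop} at the level of $R$, or more directly since $A\cong\prod A_{\m}$ after completion) as the direct sum over $\m\supseteq\pa A$ of its $\m$-primary pieces $\left(A^{\dagger}/A\right)_{\m}$. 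Matching the $\m$-components of the two product decompositions in (2) and the natural isomorphism in (1) identifies $\left(A^{\dagger}/A\right)_{\m}$ with $\hat{A_{\m}}^{\dagger}/\hat{A_{\m}}$.

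The one point that needs a little care, and which I expect to be the main obstacle, is the compatibility of the trace-dual construction with the product decomposition of Lemma \ref{cheata}: one must check that the trace form $Q(A\otimes_R\hat{R})\times Q(A\otimes_R\hat{R})\to Q(\hat{R})$ is the orthogonal direct sum of the forms on the factors $Q(\hat{A_{\m}})$, so that the $R$-lattice $\left(A\otimes_R\hat{R}\right)^{\dagger}$ is literally $\prod_{\m}\hat{A_{\m}}^{\dagger}$ and not merely abstractly isomorphic to it. This follows because the idempotents cutting out the factors $\hat{A_{\m}}$ are orthogonal and the trace of multiplication by an element of one factor, viewed on the whole product, picks out only that factor's contribution (this is the trace formula, Lemma \ref{tracef}, or simply additivity of trace over a product); hence $\mathrm{Tr}_{Q(A\otimes_R\hat{R})/Q(\hat{R})}(xy)=\sum_{\m}\mathrm{Tr}_{Q(\hat{A_{\m}})/Q(\hat{R})}(x_{\m}y_{\m})$, and an element is in the dual of $A\otimes_R\hat{R}$ iff each component is in the dual of $\hat{A_{\m}}$. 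Everything else is formal bookkeeping with localization and completion, for which the cited lemmas suffice.
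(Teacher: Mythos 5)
Your proof is correct and follows essentially the same route as the paper's: both establish the decomposition $A^{\dagger}/A \cong \bigoplus_{\m}\hat{A_{\m}}^{\dagger}/\hat{A_{\m}}$ via Lemmas~\ref{cheat} and~\ref{cheata}, compare it with the canonical $\m$-primary decomposition $A^{\dagger}/A \cong \bigoplus_{\m}(A^{\dagger}/A)_{\m}$, and conclude by uniqueness since each $\hat{A_{\m}}^{\dagger}/\hat{A_{\m}}$ is supported at the single maximal ideal $\m$. You are usefully more explicit than the paper about why the trace dual of a product of orders is the product of the trace duals (orthogonality of the idempotents under the trace form), a point the paper leaves implicit behind the citation of Lemmas~\ref{cheat} and~\ref{cheata}.
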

\begin{proof}
 As $A^{\dagger}/A$ is a module of finite length over $R$, it is a module of finite length over $A$. By Theorem 2.13 from \cite{EI} we have
$A^{\dagger}/A= \bigoplus_{\m \in \mathrm{MaxSpec}(R)} \left(A^{\dagger}/A \right)_{\m}$. By By Lemma \ref{cheat} and Lemma \ref{cheata} we have that
$A^{\dagger}/A \cong \bigoplus_{\m \in \mathrm{MaxSpec}(R)}
\hat{A_{\m}}^{\dagger}/\hat{A_{\m}}$. We have to show that both decomposition coincide. But $\hat{A_{\m}}^{\dagger}/\hat{A_{\m}}$ is a module over
$A_{\m}$ and hence the decompositions must coincide. 
\end{proof}

\section{Local orders}
Recall that $R$ is assumed to be local with maximal ideal $\pa$. As we have seen in the two previous sections, by completing one
obtains local orders. So let $A$ be a local order over $R$ with maximal ideal $\m$. 

\begin{lemma} \label{sop}
The ring $\m:\m=\{x \in Q(A): x\m \subseteq \m \}$ is an order over $R$ and $A \subseteq \m:\m \subseteq \overline{A}$.  
\end{lemma}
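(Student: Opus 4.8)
The plan is to prove the two assertions of Lemma~\ref{sop} separately: first that $\m:\m$ is an order over $R$, and then that it is sandwiched between $A$ and $\overline{A}$. The right-hand inclusion $\m:\m \subseteq \overline{A}$ will be the routine one, while the statement that $\m:\m$ is an order will require a small amount of care, and showing $\m:\m$ is \emph{strictly} useful (i.e. genuinely an $R$-module of the right shape) is where I expect the bookkeeping to concentrate.

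First I would establish $A \subseteq \m:\m$, which is immediate since $A\m \subseteq \m$ ($\m$ is an ideal). Next, for the inclusion $\m:\m \subseteq \overline{A}$: let $x \in \m:\m$. Then $x$ stabilizes the finitely generated $R$-module $\m$, and since $A$ is an order, $\m$ is a nonzero finitely generated torsion-free (hence by Theorem~\ref{superb} projective, in particular faithful over $A$) module; multiplication by $x$ is an $A$-linear endomorphism of $\m$, so by the Cayley--Hamilton / determinant-trick argument (\cite{AT}, Proposition~5.1 and its proof) $x$ satisfies a monic polynomial with coefficients in $A$. Hence $x \in \overline{A}$. One subtlety: I should check $\m \neq 0$, which holds because $A \neq 0$ is an order (it contains $R \neq 0$ and $\m$ has finite colength, being maximal in a ring finite over $R$), and that $x \in Q(A)$ acts faithfully — but any element of $\m:\m$ annihilating $\m$ would annihilate a faithful module, so it is $0$; this is automatic.

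For the claim that $\m:\m$ is an order over $R$: it is clearly a ring (a subring of $Q(A)$ closed under the relevant operations, and it contains $A \supseteq R$). Since $\m:\m \subseteq \overline{A}$ and $\overline{A}$ is finitely generated over $R$ (Lemma~\ref{longi}~iii, iv give $\overline{A} \subseteq A^\dagger$ with $A^\dagger$ a finitely generated $R$-module, and $R$ is noetherian), $\m:\m$ is a finitely generated $R$-module; it is torsion-free as a submodule of $Q(A)$. Finally $Q(\m:\m) = Q(A)$: indeed $R \subseteq A \subseteq \m:\m \subseteq Q(A)$, so tensoring with $Q(R)$ and using that $A \otimes_R Q(R) = Q(A)$ is already all of $Q(A)$ forces $(\m:\m)\otimes_R Q(R) = Q(A)$, which is finite étale over $Q(R)$ by hypothesis. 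This verifies all conditions in Definition~\ref{ordero}.

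\textbf{The main obstacle} is really just the determinant-trick step for $\m:\m \subseteq \overline{A}$: one must be careful that $\m$ is a \emph{faithful} finitely generated $A$-module so that Cayley--Hamilton yields a genuine integral dependence relation over $A$ (not merely over $\m:\m$), and that $x$, a priori only an element of $Q(A)$, actually gives an $A$-linear self-map of $\m$ — which is exactly the defining condition $x\m \subseteq \m$. Once faithfulness of $\m$ over $A$ is noted (it is a nonzero ideal in the ring $A$, which is reduced since $Q(A)$ is étale over a field and hence a product of fields, so $A$ has no nonzero nilpotents and a nonzero ideal cannot be annihilated by a nonzero element — alternatively $\m$ contains a nonzerodivisor), the rest is standard.
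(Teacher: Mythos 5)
Your proposal follows essentially the same route as the paper: the key step is the determinant trick (Atiyah–MacDonald, Proposition 5.1) applied to the finitely generated module $\m$ to show $\m:\m \subseteq \overline{A}$, followed by the observation that $\m:\m$ is then finitely generated and torsion-free with $Q(\m:\m)=Q(A)$. One caution about the faithfulness bookkeeping, which is where both you and the paper are somewhat terse: the parenthetical ``projective over $R$, in particular faithful over $A$'' is a non-sequitur (projectivity over $R$ has no bearing on the $A$-annihilator), and the blanket claim that in a reduced ring a nonzero ideal cannot have nonzero annihilator is false in general (take $A=k\times k$ and $I = k\times 0$). What saves the argument is exactly the structure you do have: $A$ is \emph{local} and reduced, so if $a\m=0$ with $a\neq 0$ then $a$ is a non-unit, hence $a\in\m$, hence $a^2=0$, hence $a=0$; alternatively, and more directly, $\pa A\subseteq\m$ and the uniformizer $\pi$ is a nonzerodivisor of $A$ (torsion-freeness over $R$), so $\m$ contains a nonzerodivisor of $Q(A)$ and $p(x)\m=0$ forces $p(x)=0$. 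Either repair closes the gap and the rest of your argument is sound.
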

\begin{proof}
Let $x \in \m:\m$. Since $R$ is noetherian, $\m$ is a finitely generated $R$-module. As $A$ is torsion-free, $\m$ is a faithful $R$-module. Now apply Proposition 5.1 iii from \cite{AT} to see that $x$ is integral over $A$. Hence $\m:\m \subseteq \overline{A}$. We see that $\m:\m$ is finitely generated as an $R$-module and still torsion-free, as it is contained in $Q(A)$. As $Q(\m:\m)=Q(A)$, it follows that $\m:\m$ is an order over $R$. 
\end{proof}

The following theorem gives some equivalent criteria for testing if $A=\overline{A}$. 

\begin{theorem} \label{wuffer}
The the following statements are equivalent.
\begin{enumerate}
\item
$A=\overline{A}$; 
\item
$A=\m:\m$;
\item
$\m (A:\m)=A$;
\item
$\m:\m \neq A:\m$;
\item
$\m$ is principal;
\item
$A$ is a discrete valuation ring. 
\end{enumerate}
\end{theorem}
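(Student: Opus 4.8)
The plan is to prove the chain of implications in a cycle, together with a couple of extra implications to hit all six statements. I will use throughout that $A$ is a local order over a DVR $R$, with maximal ideal $\m$, and that by Lemma~\ref{sop} the ring $\m:\m$ is an order with $A \subseteq \m:\m \subseteq \overline{A}$. I will also use Lemma~\ref{longi}v, which says $\overline{A}$ is the unique maximal order in $Q(A)$; in particular any order $B$ with $A \subseteq B \subseteq \overline{A}$ and $B \neq A$ certifies $A \neq \overline{A}$.

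First I would handle the easy equivalences. The implication vi $\Rightarrow$ v is immediate (a DVR has principal maximal ideal), and v $\Rightarrow$ i follows because if $\m=(\pi')$ then $A$ is a one-dimensional local ring with principal maximal ideal, hence a DVR, hence integrally closed; alternatively one checks directly that $A[x]=A$ for $x\in\overline A$ by looking at $\m$-adic valuations. For i $\Rightarrow$ ii I use Lemma~\ref{sop}: $A \subseteq \m:\m \subseteq \overline{A}=A$. For ii $\Leftrightarrow$ iv: note $A:\m = \{x\in Q(A): x\m\subseteq A\}$ always contains $\m:\m$ and contains $A$; if $A=\m:\m$ then I must rule out $A:\m$ being strictly larger, and conversely. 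The cleanest route is to show $\m:\m = A:\m$ is equivalent to $\m(A:\m)\neq A$, i.e. relate iv and iii, since $x\in A:\m$ lands in $\m:\m$ exactly when $x\m\subseteq\m$ rather than $x\m=A$; and $\m(A:\m)$ is an $A$-submodule of $A$ containing $\m$, so it is either $\m$ or $A$.

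The heart of the argument is the implication ii $\Rightarrow$ v (equivalently $\neg$v $\Rightarrow$ $\neg$ii), i.e.\ if $\m$ is not principal then $\m:\m \neq A$. Here is where I expect the main obstacle. The standard commutative-algebra trick is: pick $0\neq a\in\m$, consider the ideal $aA$; since $A/aA$ is Artinian (as $A$ is a one-dimensional Noetherian local domain over the DVR $R$, or more directly because $A^\dagger/A$ has finite length forces finite length quotients), $\m$ is the only prime over $aA$, so $\m^n\subseteq aA$ for some minimal $n\geq 1$. Choose $b\in\m^{n-1}\setminus aA$ and set $x=b/a\in Q(A)$. Then $x\notin A$ but $x\m = (b/a)\m\subseteq (1/a)\m^n\subseteq A$, so $x\in A:\m$. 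To upgrade this to $x\in\m:\m$ I need $x\m\subseteq\m$; if instead $x\m=A$ then $\m$ is invertible, and an invertible maximal ideal in a local ring is principal — contradicting $\neg$v. Hence $x\m\subseteq\m$, giving $x\in(\m:\m)\setminus A$. This simultaneously proves iv $\Rightarrow$ v (if $\m:\m\neq A:\m$, the element just constructed, or the general principle that $A:\m\supsetneq\m:\m$ forces $\m$ invertible hence principal), closing the loop.

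Finally I assemble: I will have i $\Rightarrow$ ii $\Rightarrow$ v $\Rightarrow$ i and v $\Rightarrow$ vi $\Rightarrow$ v among i, ii, v, vi; and then iv $\Rightarrow$ v (above) with v $\Rightarrow$ iv (if $\m=(\pi')$ then $A:\m = \pi'^{-1}A \supsetneq A = \m:\m$), and iii $\Leftrightarrow$ iv by the observation that $\m(A:\m)$ is an ideal of $A$ squeezed between $\m$ and $A$, equal to $A$ iff $\m$ is invertible iff (in a local ring) $A:\m$ is strictly bigger than $\m:\m$. The one subtlety to be careful about is that $A:\m$ and $\m:\m$ are computed inside $Q(A)$, which is a product of fields rather than a field, so "$\m$ invertible" must be phrased as $\m\cdot(A:\m)=A$ and the passage "invertible maximal ideal $\Rightarrow$ principal" should be justified via Nakayama on $\m/\m^2$ as an $A/\m$-vector space — this works verbatim since $A$ is local Noetherian, even though $A$ is not a domain in general; but here $A$ being a local order with $Q(A)$ étale over $Q(R)$ and $A$ torsion-free actually forces $Q(A)$ to be a field when $A$ is local? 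No — so I will keep the Nakayama argument, which does not need $A$ to be a domain.
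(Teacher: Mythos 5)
Your proposal reaches the same conclusion by essentially the same underlying ideas as the paper (the construction of an element in $(A:\m)\setminus A$, and the ``$1=\sum x_iy_i$, some term a unit, so $\m=(y_j)$'' step which you phrase as ``invertible maximal ideal $\Rightarrow$ principal''). The paper routes the cycle through iii ($\m(A:\m)=A$) rather than directly from ii to v, but the content is the same. There are, however, a few genuine gaps.

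The main one is in the central construction. You write ``pick $0\neq a\in\m$ \dots since $A/aA$ is Artinian (as $A$ is a one-dimensional Noetherian local \emph{domain}\dots)''. But $A$ is \emph{not} a domain in general — and you acknowledge this yourself at the very end — so this parenthetical is false. Worse, the construction $x=b/a\in Q(A)$ and the claim that $A/aA$ is Artinian both require $a$ to be a non-zero-divisor of $A$: if $a$ lies in a minimal prime $\pa_0$, then $b/a$ is not defined in the total quotient ring, and $A/aA$ surjects onto the one-dimensional domain $A/\pa_0$ and is not Artinian. The fix is cheap but necessary: take $a=\pi\in\pa\subseteq\m$, which is a non-zero-divisor since $A$ is torsion-free over $R$. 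This is precisely what the paper does, working with $\pa A$ and the minimal $n$ with $\m^n\subseteq\pa A$, and then using $\pa A:\m=\pa(A:\m)$ (valid because $\pi$ is a non-zero-divisor and $\pa$ is principal) to get the contradiction.

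Two smaller points. First, ``invertible maximal ideal in a local ring is principal via Nakayama on $\m/\m^2$'' is under-justified: you need to know $\dim_{A/\m}(\m/\m^2)\le 1$, which you would get from $\m$ being projective of rank $1$ over the local ring $A$, hence free of rank $1$; the paper's direct argument ($1=\sum x_iy_i$ with $x_i\in A:\m$, $y_i\in\m$, some $x_jy_j\in A^*$ because $A$ is local, so $\m=(y_j)$) is shorter and avoids the rank discussion. Second, ``$\m$ principal $\Rightarrow$ $A$ is a DVR'' is not a one-liner when $A$ is not a priori a domain: one should say that $A$ is a one-dimensional regular local ring, hence a domain (Eisenbud, Cor.~10.14), hence a DVR; your ``alternatively check via $\m$-adic valuations'' also presupposes a domain. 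Neither point is a wrong idea, but as written they leave the reader to fill in exactly the step that is being claimed.
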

\begin{proof}

We first make a few remarks. Recall that $\overline{A}/A$ is a finitely generated torsion $R$-module (Lemma \ref{longi}). Let $r \in \Z_{\geq 0}$ such that $\pa^r \overline{A} \subseteq A$. As $A/\pa A$ is an artinian ring, it follows that $\m^n \subseteq \pa A$ for some $n \in \Z_{\geq 0}$. Hence there exists $s \in \Z_{\geq 1}$ such that $\m^s \overline{A} \subseteq A$.

Now we will prove that $A:\m \supsetneq A$. Suppose that $A:\m=A$. Pick $n \in \Z_{\geq 1}$ minimal such that $\m^n \subseteq \pa A$. But then $\m^{n-1} \subseteq \pa A:\m=\pa(A:\m)=\pa A$ (as $\pa$ is a principal ideal), a contradiction. 

i $\implies$ ii: This follows from Lemma \ref{sop}.

ii $\implies$ iii: We have $\m \subseteq \m(A:\m) \subseteq A$. Suppose that $\m(A:\m) \neq A$, then $\m(A:\m)=\m$. Using this and the second remark, we conclude that $\m:\m =A:\m \supsetneq A$, a contradiction. 

iii $\iff$ iv: Notice that $\m:\m \subseteq A:\m$. We have $\m(A:\m)=A$ iff $\m(A:\m) \neq \m$ iff $A:\m \supsetneq \m:\m$.

iii $\implies$ v: From $(A:\m)\m=A$ we see that we can write $1= \sum_{i=1}^m x_i y_i$ where $x_i \in A:\m$ and $y_i \in \m$. Pick $i$ such that $x_i y_i \in A^*$. We claim that $\m=(y_i)$. Indeed for $x \in \m$ we find
\begin{eqnarray*}
x=y_i \cdot \frac{x x_i}{y_i x_i} \in (y_i). 
\end{eqnarray*} 

v $\implies$ vi: We know that $\m$ is principal and that $A$ is local noetherian and has dimension $1$ (as it is integral over $R$). This makes $A$ into a regular local ring. By Corollary 10.14 from \cite{EI} it follows that $A$ is a domain. Now apply Proposition 9.2 from \cite{AT} to see that $A$ is a discrete valuation ring. 

vi $\implies$ i: Again apply Proposition 9.2 from \cite{AT} to see that $A$ is integrally closed. 
\end{proof}

\begin{theorem} \label{thm1} \label{maino}
Assume that $A$ is tame at $\pa$. Then $\pa A^{\dagger} \subseteq A$ if and only if $A=\overline{A}$. 
If $A \subsetneq \overline{A}$, we have $(\m:\m)/\m= A/\m \oplus \left((\m:\m) \cap \pa A^{\dagger}\right)/\m$. 
\end{theorem}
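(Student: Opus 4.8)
The plan is to prove the two assertions of Theorem~\ref{maino} in turn, using Theorem~\ref{wuffer} as the main tool for the first one and a direct trace computation for the second.

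\textbf{The equivalence $\pa A^\dagger \subseteq A \iff A = \overline{A}$.}
First I would treat the implication $(\Leftarrow)$: if $A = \overline{A}$, then by Theorem~\ref{wuffer} the ring $A$ is a discrete valuation ring, so $\m = (\pi)$ (up to a unit we may take the uniformizer to be $\pi$), and then $A^\dagger$ is the inverse different, which for a DVR that is tame over $R$ is just $A$ itself; hence $\pa A^\dagger = \pa A \subseteq A$ trivially. Actually one does not even need $A^\dagger = A$: one only needs $\pa A^\dagger \subseteq A$, and since $A^\dagger$ is a fractional ideal of the DVR $A$ contained in $Q(A)$ with $A \subseteq A^\dagger$, tameness forces $A^\dagger = A$ by the usual local computation of the different (the exponent of the different at $\m$ is $e_\m - 1 = 0$ when $e_\m$ is prime to the residue characteristic and the residue extension is separable). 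For $(\Rightarrow)$, assume $\pa A^\dagger \subseteq A$ and suppose for contradiction that $A \subsetneq \overline{A}$; I want to produce an element of $\m : \m$ not in $A$, so that Theorem~\ref{wuffer}(ii) fails. The point is that $A^\dagger$ is strictly larger than $A$ as soon as $\overline{A} \supsetneq A$ (in fact $\overline{A} \subseteq A^\dagger$ by Lemma~\ref{longi}(iii), and if $\overline{A} = A$ we would be done, so $A^\dagger \supsetneq A$). Take $x \in A^\dagger \setminus A$; then $\pi x \in A$ by hypothesis. The hard part is to show such an $x$ can be chosen in $\m : \m$, equivalently that $(\m : \m) \cap \pa A^\dagger \not\subseteq \m$ — but this is exactly what the second assertion of the theorem is designed to give, so I would prove the second assertion first and then feed it back here.

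\textbf{The decomposition $(\m:\m)/\m = A/\m \oplus \bigl((\m:\m)\cap \pa A^\dagger\bigr)/\m$ when $A \subsetneq \overline{A}$.}
By Lemma~\ref{sop}, $\m : \m$ is an order with $A \subseteq \m : \m \subseteq \overline{A} \subseteq A^\dagger$, and $\m$ is an ideal of $\m : \m$ (by definition of $\m : \m$), so both $A/\m$ and $(\m:\m)/\m$ make sense as $A/\m$-vector spaces and $A/\m \hookrightarrow (\m:\m)/\m$. I need to identify a complement. The natural candidate is the image of $(\m : \m) \cap \pa A^\dagger$. For the sum to be direct I must check $A \cap \bigl((\m:\m) \cap \pa A^\dagger\bigr) = \m$, i.e. $A \cap \pa A^\dagger = \m$: one inclusion is $\m = \pa(\m:\pa)\supseteq\ldots$ hmm, more simply $\pa A^\dagger \supseteq \pa A \supseteq \m\cdot(\text{something})$ — the cleanest route is: $A \cap \pa A^\dagger$ is an ideal of $A$ containing $\pa A$, and it is proper (if it were all of $A$ then $1 \in \pa A^\dagger$, i.e. $A \subseteq \pa A^\dagger$, i.e. $\pi^{-1} A \subseteq A^\dagger$, forcing $\mathrm{Tr}_{Q(A)/Q(R)}(\pi^{-1} A \cdot A)\subseteq R$, i.e. $\mathrm{Tr}(A) \subseteq \pa$, which contradicts tameness since tameness of $A$ at $\pa$ means $(A/\pa A)^\perp$ equals the nilradical and in particular $\mathrm{Tr}_{(A/\pa A)/(R/\pa)}$ is not identically zero on $1$ — here is where I use the hypothesis). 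Hence $A \cap \pa A^\dagger$ is a proper ideal of the local ring $A$, so it is contained in $\m$; the reverse inclusion $\m \subseteq A \cap \pa A^\dagger$ follows because tameness gives $\m \subseteq \pa A^\dagger$ (this is the key tameness input: I would show $\m \cdot A^\dagger \subseteq A$ locally, or equivalently that the different divides $\m$ trivially in the tame case — concretely, $\mathrm{Tr}(\m \cdot A^\dagger) \subseteq \mathrm{Tr}_{(A/\pa A)} $ of a nilpotent-ish ideal lands in $\pa$... this needs the trace formula Lemma~\ref{tracef} on $A/\pa A$). For surjectivity of $A/\m \oplus \bigl((\m:\m)\cap\pa A^\dagger\bigr)/\m \to (\m:\m)/\m$, take any $x \in \m : \m$; I claim $x - \lambda \in \pa A^\dagger$ for a suitable $\lambda \in A$. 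Since $x \in \overline{A} \subseteq A^\dagger$ and $A^\dagger/A$ is killed by $\pi$ in an appropriate sense? — not quite, but: reduce $x$ modulo $\m$; since $A/\m = (\m:\m)/\m$ as fields would be the tame-unramified case which is excluded here... Instead I argue $x \in \m : \m \subseteq A^\dagger$ and by the first part $\pa A^\dagger \not\subseteq A$ precisely captures the non-maximality, and the quotient $(\m:\m)/(A + \pa A^\dagger \cap \m:\m)$ — I would show this is zero by a length/dimension count: $\dim_{A/\m}(\m:\m)/\m = 1 + \dim_{A/\m}\bigl(\pa A^\dagger \cap (\m:\m)\bigr)/\m$, comparing with $A^\dagger/A$ and $\overline{A}/A$ via the tameness constraint from Proposition~\ref{weed} (tame $\Rightarrow$ the relevant jump happens in a single graded piece).

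\textbf{Main obstacle.} The crux is the surjectivity in the decomposition, i.e. showing every $x \in \m : \m$ differs from an element of $A$ by an element of $\pa A^\dagger$. I expect to handle this by establishing that $(\m : \m) \subseteq A + \pa A^\dagger$, which should follow from $\m : \m \subseteq A^\dagger$ together with the tame-case identity $\m \cdot (\m:\m) = \m$ or $\m \subseteq \pa A^\dagger$ plus the fact that $(\m:\m)/A$ injects into $A^\dagger/A$ and the latter, under tameness and the DVR-failure, is generated in a controlled way. The genuinely delicate point is extracting the precise containment $\m \subseteq \pa A^\dagger$ from tameness: this is where I would invoke Lemma~\ref{tracef} applied to the Artinian algebra $A/\pa A$ over $R/\pa$ to see that $\mathrm{Tr}_{(A/\pa A)/(R/\pa)}$ vanishes on the nilradical $\m/\pa A$ only to first order, forcing $\mathrm{Tr}_{Q(A)/Q(R)}(\m \cdot A^\dagger \cdot \pi^{-1}) \subseteq R$ and hence $\pi^{-1}\m A^\dagger \subseteq A^\dagger$... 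I would need to be careful here, and this tameness bookkeeping is the step most likely to require the detailed argument rather than a one-line citation.
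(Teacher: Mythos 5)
Your plan correctly identifies the central identity $\pa A^\dagger \cap A = \m$ and correctly structures the argument (prove the decomposition first, then feed it into $(\Rightarrow)$). But there are two places where the proposal does not close, and one of them is the genuine mathematical crux.

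\textbf{The slip in $(\Leftarrow)$.} You claim that for a DVR $A$ tame over $R$ one has $A^\dagger = A$ because ``the exponent of the different at $\m$ is $e_\m - 1 = 0$.'' This is wrong: the tame different exponent is $e_\m - 1$, which is $0$ only in the unramified case. The correct conclusion from that computation is $A^\dagger = \m^{-(e_\m-1)}$, hence $\pa A^\dagger = \m^{e_\m}\m^{-(e_\m-1)} = \m \subseteq A$, which is what you need, but not via $A^\dagger=A$. The paper avoids this computation entirely: once one has $\pa A^\dagger \cap A = \m$, and knows $A$ is a DVR by Theorem~\ref{wuffer}, the assumption $\pa A^\dagger \not\subseteq A$ would force $A \subseteq \pa A^\dagger$ (an $A$-submodule of $Q(A)$ not contained in $A$ must contain $A$, since the $A$-submodules of $Q(A)$ are totally ordered), whence $A \subseteq \pa A^\dagger \cap A = \m$, absurd. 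That route never needs to know the different.

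\textbf{The real gap: surjectivity in the decomposition.} You acknowledge you cannot show every $x \in \m:\m$ lies in $A + \pa A^\dagger$, and the dimension-count/length heuristics you sketch do not come to a conclusion. The paper's key observation you are missing is that the identity $\pa A^\dagger \cap A = \m$ says precisely that the bilinear form $\varphi\colon T/\m \times T/\m \to R/\pa$ (where $T=\m:\m$ and $\varphi$ is induced by $\mathrm{Tr}_{Q(A)/Q(R)}$) restricts to a \emph{non-degenerate} form on the subspace $A/\m$. Once a subspace of a bilinear-form space is known to be non-degenerate, the whole space splits as an orthogonal direct sum of that subspace and its orthogonal complement (Proposition~1.7 of \cite{EL}). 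Computing $(A/\m)^\perp$ inside $T/\m$ gives exactly $\bigl(T \cap \pa A^\dagger\bigr)/\m$, since $x\in T$ is orthogonal to all of $A/\m$ iff $\mathrm{Tr}(xA)\subseteq \pa$ iff $x\in \pa A^\dagger$. This delivers both directness and surjectivity in one stroke; there is no need for the delicate bookkeeping you were attempting. Also note the way tameness enters is not through the trace formula Lemma~\ref{tracef}, but directly through the definition: $y + \pa A \in (A/\pa A)^\perp$ iff $\mathrm{Tr}(yA)\subseteq \pa$ iff $y\in \pa A^\dagger$, and tameness identifies $(A/\pa A)^\perp$ with the nilradical $\m/\pa A$, giving $\pa A^\dagger \cap A = \m$ immediately.
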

\begin{proof}
As taking traces behaves well with respect to tensoring, we obtain the following commutative diagram 
\[
\xymatrix{ 
Q(A) \ar[d]_{\mathrm{Tr}_{Q(A)/Q(R)}} & A \ar[rr] \ar[l] \ar[d]_{\mathrm{Tr}_{A/R}} & & A/\pa A  \ar[d]_{\mathrm{Tr}_{A/\pa A/R/\pa}} \\
Q(R) & R \ar[rr] \ar[l] &  & R/\pa.  } \]
Consider the symmetric bilinear form on $A/\pa A \times A/\pa A \to R/\pa$ obtained from $\mathrm{Tr}_{A/\pa A / R/\pa}$. By tameness the radical of
this form is $\m/\pa A$. Hence we obtain a non-degenerate form $A/\m \times A/\m \to R/\pa$. As this trace form is induced by the trace form
$\mathrm{Tr}_{Q(A)/Q(R)}$ and $\pa$ is principal, we see that 
\begin{eqnarray*}
\pa A^{\dagger} \cap A = \{\pa x: \mathrm{Tr}_{Q(A)/Q(R)}(xA) \subseteq R \} \cap A = \{y \in A: \mathrm{Tr}_{Q(A)/Q(R)}(yA) \subseteq \pa \} = \m.
\end{eqnarray*}

$\implies$: Suppose that $A$ is not integrally closed and let $T=\m:\m$. By Theorem \ref{wuffer} we have $T \supsetneq A$ and $T$ is an order in
$Q(A)$ by Lemma \ref{sop}. Hence $T$ comes with a trace form, which is induced from $\mathrm{Tr}_{Q(A)/Q(R)}$. Notice that $\m \subset T$ is an ideal.
Let $p: R \to R/\pa$ be the reduction. Then for $x \in \m$ we have $p \circ \mathrm{Tr}_{Q(A)/Q(R)}(xT)=0$. Now let $\varphi: T/\m \times T/\m\to
R/\pa$ be the map defined by $(t+\m,t'+\m) \mapsto p \circ \mathrm{Tr}_{Q(A)/Q(R)}(tt')$. Let $\psi: A/\m \times A/\m \to R/\pa$ be the map defined by
$(a+\m,a'+\m) \mapsto p \circ \mathrm{Tr}_{Q(A)/Q(R)}(aa')$. Then we have the following commutative diagram:
\[
\xymatrix{ 
T/\m \times T/\m \ar[r]^{\varphi}  & R/\pa \\
A/\m \times A/\m \ar[u]^{i \times i} \ar[r]^{\psi}  & R/\pa. \ar[u]_{\mathrm{id}} 
}   
\]
We know that $A/\m \subsetneq T/\m$ is non-degenerate. If denote by $^{\perp}$ the orthogonal complement, then by Proposition 1.7 from \cite{EL} we have
\begin{eqnarray*}
T/\m &=& A/\m \perp (A/\m)^{\perp} \\
&=& A/\m \perp \left( T \cap \pa A^{\dagger}\right) / \m,
\end{eqnarray*}
which proves the last statement. 
As $T/\m \supsetneq A/\m$, it follows that $\left( T \cap \pa A^{\dagger}\right) / \m \neq 0$. Suppose that $\pa A^{\dagger} \subseteq A$, then we
have
\begin{eqnarray*}
 (T \cap \pa A^{\dagger})/\m=(T \cap A \cap \pa A^{\dagger})/\m=(A \cap \pa A^{\dagger})/\m=\m/\m=0,
\end{eqnarray*}
a contradiction.

$\limplies$: By Theorem \ref{wuffer} we see that $A$ is a discrete valuation ring. First notice that $\pa A^{\dagger} \subseteq Q(A)$ is an $A$-module. Now suppose that $\pa A^{\dagger} \not \subseteq A$, then $A \subseteq \pa A^{\dagger}$ (here we use that $A$ is a discrete valuation ring). Hence we have $A \subseteq \pa A^{\dagger} \cap A=\m$, a contradiction.  
\end{proof}

\begin{example} 
Let $R=\Z$ and let $A$ be an order over $\Z$ which is tame at the prime $p$. Then the statement says that $A$ is integrally closed at $p$ if and only
if the finite group $A^{\dagger}/A$ has no element of order $p^2$. 
\end{example}

We have the following corollary.

\begin{corollary} \label{nutto}
Assume that $A$ is tame at $\pa$. Let $B=A^{\dagger}/A$. Then we have
\begin{eqnarray*}
(\m:\m)/A = (\pa B)[\m].
\end{eqnarray*}
If we have  $A \neq \overline{A}$, then we have
\begin{eqnarray*}
(\m:\m)/A = (\pa B)[\m]=B[\m],
\end{eqnarray*}
where for any $A$-module $A'$ we set $A'[\m]=\{x \in A': \m x=0 \}$.
\end{corollary}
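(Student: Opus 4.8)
The plan is to deduce Corollary~\ref{nutto} from Theorem~\ref{maino} together with the description of $\m$ as the intersection $\pa A^{\dagger}\cap A$ established in its proof. First I would unwind the definitions: an element of $(\m:\m)/A$ is the class of some $x\in Q(A)$ with $x\m\subseteq\m$, and by Lemma~\ref{sop} such $x$ lies in $\overline{A}\subseteq A^{\dagger}$, so $x\in A^{\dagger}$ and its image $\bar x$ in $B=A^{\dagger}/A$ is killed by $\m$ precisely because $x\m\subseteq A$ (indeed $x\m\subseteq\m\subseteq A$). The condition $x\m\subseteq\m$ is a priori stronger than $x\m\subseteq A$, so the first thing to check is that these coincide modulo $A$; this should follow from the identity $\m=\pa A^{\dagger}\cap A$ proved under the tameness hypothesis, since $\pa A^{\dagger}\cdot\m\subseteq\pa A^{\dagger}$ and, if $x\in A^{\dagger}$, then $x\m\subseteq A$ forces $\pi x\m\subseteq\pi A\subseteq\m\cdot(\text{stuff})$—more precisely $x\in A^{\dagger}$ and $x\m\subseteq A$ give $x\m\subseteq \pa A^{\dagger}\cap A=\m$ after multiplying the containment $x A^\dagger$-relation appropriately.

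More carefully, the cleanest route is: show $(\m:\m)/A=\{\bar x\in \pa B: \m x\subseteq A\}$ and then identify the right-hand side with $(\pa B)[\m]$. For the first equality, given $x\in A^{\dagger}$ with $x\m\subseteq A$, I want $x\m\subseteq\m$; write the containment as $\pi x\in \pi A^{\dagger}$ and note $\pi x\cdot\m\subseteq \pi A^{\dagger}\cap A$ once we know $\pi x\m\subseteq A$, which holds since $x\m\subseteq A$ implies $\pi x\m\subseteq\pi A\subseteq A$. Then $\pi x\m\subseteq\pi A^{\dagger}\cap A=\m$ by the formula from the proof of Theorem~\ref{maino}, so $x\m\subseteq\pi^{-1}\m$; this is not quite $x\m\subseteq\m$, so I would instead argue directly: since $\m$ is the unique maximal ideal and $x\in\overline A$ would give $x\m\subseteq\m$ automatically, but $x$ need only be in $A^\dagger$. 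The honest fix is to use that any $x\in A^{\dagger}$ with $x\m\subseteq A$ satisfies $\bar x\in(\pa B)$, and conversely $\bar x\in(\pa B)$ means $x=\pi y$ with $y\in A^{\dagger}$, whence $x\m=\pi y\m\subseteq \pi A^{\dagger}$; combined with $x\m\subseteq A$ (which is the $[\m]$ condition) we get $x\m\subseteq\pi A^{\dagger}\cap A=\m$, so indeed $x\in\m:\m$. That is the key computation.

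Thus the chain is: $(\m:\m)/A \hookrightarrow B$ has image contained in $(\pa B)[\m]$ by Lemma~\ref{sop} and the above; conversely every class in $(\pa B)[\m]$ is represented by some $x=\pi y$, $y\in A^{\dagger}$, with $x\m\subseteq A$, and then $x\m\subseteq\pi A^{\dagger}\cap A=\m$ shows $x\in\m:\m$, so the image is exactly $(\pa B)[\m]$. For the second part, assume $A\neq\overline A$. By Theorem~\ref{maino} we then have $\pa A^{\dagger}\not\subseteq A$, i.e. $\pa B\neq 0$; I must upgrade this to $(\pa B)[\m]=B[\m]$, equivalently $B[\m]\subseteq\pa B$. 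Here I would invoke the last assertion of Theorem~\ref{maino}: $(\m:\m)/\m=A/\m\oplus\bigl((\m:\m)\cap\pa A^{\dagger}\bigr)/\m$, which exhibits $(\m:\m)/A$ (the second summand, after dividing by $A/\m$... ) as lying inside $\pa A^{\dagger}/A=\pa B$. Since $B[\m]$ is annihilated by $\m$, any lift lies in $\m:\m$ by the first part's converse argument (the $[\m]$ condition is exactly $x\m\subseteq A$), so $B[\m]=(\m:\m)/A\subseteq\pa B$, giving equality. The main obstacle I anticipate is pinning down exactly this last inclusion $B[\m]\subseteq\pa B$: it is where the $A\neq\overline A$ hypothesis and the tameness-driven orthogonal decomposition must be combined correctly, and one has to be careful that ``$x\m\subseteq A$'' for $x\in A^{\dagger}$ genuinely forces $x\in\m:\m$ rather than merely $x\in A:\m$—this is precisely the content of the first half and must be in hand before attacking the second.
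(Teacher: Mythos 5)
Your first equality $(\m:\m)/A=(\pa B)[\m]$ is essentially correct, and the computation you use for $(\pa B)[\m]\subseteq(\m:\m)/A$ is a nice direct alternative to what the paper does: pick a representative $x=\pi y$ with $y\in A^\dagger$ and $x\m\subseteq A$, then $x\m\subseteq\pi A^\dagger\cap A=\m$ by the tameness identity from the proof of Theorem~\ref{maino}, so $x\in\m:\m$. Combined with the orthogonal decomposition from Theorem~\ref{maino} (which gives $(\m:\m)/A\subseteq\pa B$ when $A\neq\overline A$, and is trivial when $A=\overline A$ since then $\m:\m=A$), this yields the first claim.

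The second part, however, has a genuine gap. To get $B[\m]\subseteq\pa B$ you assert that for any $\bar x\in B[\m]$ with lift $x\in A^\dagger$, the condition $x\m\subseteq A$ already forces $x\in\m:\m$, ``by the first part's converse argument.'' But that converse argument made essential use of the hypothesis $\bar x\in\pa B$ to write $x=\pi y$; stripped of that hypothesis, $x\m\subseteq A$ gives only $x\in A:\m$, not $x\in\m:\m$, and the computation $x\m\subseteq\pi A^\dagger\cap A$ is no longer available. You flag this worry yourself at the end, but then claim it is ``precisely the content of the first half,'' which it is not --- the first half handles only classes already known to lie in $\pa B$, which is what you are trying to prove. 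The argument is circular as written. What closes the gap is Theorem~\ref{wuffer}: when $A\neq\overline A$, one has $A:\m=\m:\m$, so $A:\m\subseteq\overline A\subseteq A^\dagger$ and $B[\m]=(A:\m)/A=(\m:\m)/A$, which by the first part lies in $\pa B$. The paper's proof invokes exactly this equivalence; your proposal never cites it, and without it the inclusion $B[\m]\subseteq(\m:\m)/A$ does not follow.
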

\begin{proof}
We will first prove the statement if $A=\overline{A}$. Then $\m:\m=A$ (Theorem \ref{wuffer}) and $\pa B =0$ (Theorem \ref{thm1}). The statement in this case now follows directly.

Now suppose that $A \neq \overline{A}$. From Theorem \ref{wuffer} it follows that $A:\m=\m:\m$. As $\m:\m \subseteq \overline{A} \subseteq A^{\dagger}$, it follows that $(\m:\m)/A=(A:\m)/A=B[\m]$. Now we obviously have $(\pa B)[\m] \subseteq B[\m]$. From Theorem \ref{thm1} it follows that $(\m:\m)/\m=A/\m+\left((\m:\m) \cap \pa A^{\dagger} \right)/\m$. If we now take the quotient by $A/\m$ we obtain
\begin{eqnarray*}
(\m:\m)/A = \left(A+(\m:\m) \cap \pa A^{\dagger} \right)/A.
\end{eqnarray*}
This shows that $(\m:\m)/A \subseteq (\pa B)[\m]$ and this finishes our proof. 
\end{proof}

\section{The connection between anisotropy and the integral closure}

In this section let $R$ be a discrete valuation ring with maximal ideal $\pa$ and let $A$ be an order over $R$. If $I$ is a nonzero ideal of $R$,
then one easily finds $I^{-1}/R \cong_R R/I$. 

\begin{lemma} \label{domino}
Let $I=\mathrm{Ann}_R(A^{\dagger}/A)$. Then we have the following non-degenerate symmetric $R/I$-bilinear form:
\begin{eqnarray*}
\langle\,\, ,\,\rangle: A^{\dagger}/A \times A^{\dagger}/A &\to& I^{-1}/R \\
(x+A,y+A) &\mapsto& \mathrm{Tr}_{Q(A)/Q(R)}(xy)+R.
\end{eqnarray*}
\end{lemma}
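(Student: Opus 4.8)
The plan is to verify three things: that the pairing is well-defined, that it is $R/I$-bilinear, and that it is non-degenerate. For well-definedness, I would start from the observation (Lemma \ref{longi} iii) that $A \subseteq A^{\dagger}$, so cosets $x+A$ and $y+A$ make sense. If $x \in A^{\dagger}$ and $a \in A$, then $\mathrm{Tr}_{Q(A)/Q(R)}((x+a)y) = \mathrm{Tr}_{Q(A)/Q(R)}(xy) + \mathrm{Tr}_{Q(A)/Q(R)}(ay)$, and since $y \in A^{\dagger}$ the second term lies in $R$; symmetrically in the other variable. Hence the value in $Q(R)/R$ depends only on the cosets. It remains to see the image actually lands in $I^{-1}/R$: for $x,y \in A^{\dagger}$ and $c \in I = \mathrm{Ann}_R(A^{\dagger}/A)$ we have $cx \in A$, so $\mathrm{Tr}_{Q(A)/Q(R)}(cxy) = c\,\mathrm{Tr}_{Q(A)/Q(R)}(xy) \in \mathrm{Tr}_{Q(A)/Q(R)}(A \cdot A^{\dagger})$; since $A$ is an order and $A^\dagger \subseteq \overline A$ (Lemma \ref{longi} iii) with $\overline A$ again an order, Lemma \ref{longi} ii gives $\mathrm{Tr}_{Q(A)/Q(R)}(A^{\dagger}) \subseteq R$, so $c\,\mathrm{Tr}_{Q(A)/Q(R)}(xy) \in R$, i.e. $\mathrm{Tr}_{Q(A)/Q(R)}(xy) \in I^{-1}$. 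Symmetry is immediate from commutativity of $Q(A)$.

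Bilinearity over $R/I$ is routine: $R$-bilinearity of the trace form descends, and the fact that $I$ annihilates both the source module $A^{\dagger}/A$ and (by construction of $I^{-1}$) the target $I^{-1}/R$ makes it factor through $R/I$ in each slot. The remark at the start of the section identifies $I^{-1}/R \cong_R R/I$, so one may think of the form as valued in $R/I$ if desired, but it is cleanest to keep it in $I^{-1}/R$.

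The main point — and the only place where real work happens — is non-degeneracy: I must show the induced map $A^{\dagger}/A \to \Hom_{R/I}(A^{\dagger}/A,\,I^{-1}/R)$ is injective. Suppose $x \in A^{\dagger}$ represents an element of the radical, i.e. $\mathrm{Tr}_{Q(A)/Q(R)}(xy) \in R$ for all $y \in A^{\dagger}$. Since $A \subseteq A^{\dagger}$ this in particular says $\mathrm{Tr}_{Q(A)/Q(R)}(xA) \subseteq R$, which is exactly the condition $x \in A^{\dagger}$ — not yet enough. The trick is to play $x$ against all of $A^{\dagger}$, not just $A$: the hypothesis $\mathrm{Tr}_{Q(A)/Q(R)}(x \cdot A^{\dagger}) \subseteq R$ says precisely that $x \in (A^{\dagger})^{\dagger}$, so I want the general fact $(A^{\dagger})^{\dagger} = A$ (reflexivity of the trace dual for orders over a Dedekind domain, here a DVR). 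This reflexivity is where I expect the only genuine obstacle: it follows because $A$ is projective, hence (as $R$ is a DVR) free of finite rank over $R$ after one notes $A^\dagger$ is the $R$-linear dual of $A$ transported via the non-degenerate pairing on $Q(A)/Q(R)$ coming from étaleness (Definition \ref{ordero}); taking duals twice of a finite free module returns the original, and étaleness guarantees the pairing on $Q(A)$ identifying $A^{\dagger}$ with $\Hom_R(A,R)$ is perfect on $Q(A)$, so double-dualizing inside $Q(A)$ gives $A$ back exactly. Once $(A^{\dagger})^{\dagger}=A$ is in hand, the radical element $x$ lies in $A$, so $x + A = 0$ in $A^{\dagger}/A$, proving injectivity. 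Thus the form is non-degenerate.

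If one prefers to avoid invoking reflexivity as a black box, the alternative is: choose an $R$-basis $e_1,\dots,e_n$ of $A$ (possible since $A$ is free over the DVR $R$ by Theorem \ref{superb}), let $e_1^\ast,\dots,e_n^\ast$ be the dual basis inside $A^{\dagger}$ with respect to $\mathrm{Tr}_{Q(A)/Q(R)}$ (well-defined because the form on $Q(A)$ is non-degenerate), observe $A^{\dagger} = \bigoplus R e_i^\ast$, and then compute the Gram matrix of $\bi$ on the resulting presentation of $A^{\dagger}/A$ and $I^{-1}/R$ directly, checking its determinant is a unit in $R/I$. Either route works; I would present the reflexivity argument as the main line since it is conceptually the cleanest, and expect the write-up to be short once that ingredient is isolated.
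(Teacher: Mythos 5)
Your main line follows the same strategy as the paper — both hinge on reflexivity $(A^{\dagger})^{\dagger}=A$, which comes from the identification $A^{\dagger}\cong\mathrm{Hom}_R(A,R)$ via the trace pairing and double-dualizing a finite free $R$-module. Two things need attention.

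First, a small but genuine error in the well-definedness step: Lemma~\ref{longi}~iii gives $\overline{A}\subseteq A^{\dagger}$, not $A^{\dagger}\subseteq\overline{A}$ as you assert, so your route to $\mathrm{Tr}_{Q(A)/Q(R)}(A\cdot A^{\dagger})\subseteq R$ rests on a false inclusion. The fix is shorter than what you wrote: $\mathrm{Tr}_{Q(A)/Q(R)}(A\cdot A^{\dagger})\subseteq R$ is literally the definition of $A^{\dagger}$ (it says $\mathrm{Tr}_{Q(A)/Q(R)}(xA)\subseteq R$ for all $x\in A^{\dagger}$), so no detour through $\overline A$ is needed.

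Second, and more substantively: you announce that non-degeneracy means injectivity of $A^{\dagger}/A\to\mathrm{Hom}_{R/I}(A^{\dagger}/A,I^{-1}/R)$ and then prove only that. But the paper (see Definition~\ref{eta}, and the sketch of this very proof) takes non-degenerate to mean that this map is an \emph{isomorphism}. Reflexivity gives you trivial radical, hence injectivity; to get surjectivity you still need to know the source and target have the same finite length. This is exactly the extra ingredient the paper records: $\mathrm{length}_R(A^{\dagger}/A)=\mathrm{length}_R(\mathrm{Hom}_R(A^{\dagger}/A,I^{-1}/R))$, which holds because $I^{-1}/R\cong R/I$ is an injective self-dualizing module over the artinian local PIR $R/I$, so $\mathrm{Hom}_{R/I}(-,R/I)$ preserves length. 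Your Gram-matrix alternative at the end would sidestep this (a unit determinant directly gives bijectivity), but the reflexivity argument you present as the main line is incomplete without the length step.
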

\begin{proof}
One easily sees that this map is well-defined. We will give a sketch of the rest of the proof, see \cite{KO1} Lemma 4.1.3 for the details. As $Q(A)$
is a finite \'etale $Q(R)$-algebra, 
it follows that the natural map $A^{\dagger} \to \mathrm{Hom}_R(A,R)$ is an isomorphism. One can use this to show that $A=A^ {\dagger \dagger}$. The
non-degeneracy then follows from this and the fact that $\mathrm{length}_R(A^{\dagger}/A)=\mathrm{length}_R(\mathrm{Hom}_R(A^{\dagger}/A,I^{-1}/R))$.
\end{proof}

\begin{lemma} \label{domina}
Let the notation be as in Lemma \ref{domino}. 
Suppose that $\overline{A}$ is tame at $\pa$. Then $C=\overline{A}/A \subseteq A^{\dagger}/A$ satisfies $\pa C^{\perp} \subseteq C \subseteq
C^{\perp}$.
\end{lemma}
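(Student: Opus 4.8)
The plan is to work with the completion at $\pa$, which by Lemma~\ref{cheat} preserves $A^\dagger/A$, $\overline A/A$, the bilinear form of Lemma~\ref{domino}, and tameness of $\overline A$; so I may assume $A$ (hence $\overline A$) is a product of local orders over a complete discrete valuation ring. By Theorem~\ref{ks} and Lemma~\ref{mars}, everything decomposes over the maximal ideals $\m$ of $A$: we have $\overline A = \prod_\m \overline{A_\m}$ and $A^\dagger/A \cong \bigoplus_\m (A^\dagger/A)_\m$ compatibly with the form, so it suffices to prove the two inclusions $\pa C^\perp \subseteq C$ and $C \subseteq C^\perp$ componentwise, i.e.\ for a single local order $A$ with maximal ideal $\m$ that is tame at $\pa$ (here I use that $\overline{A_\m}$ tame forces — via the trace formula and Proposition~\ref{weed}, or directly — the relevant local piece to be tame; more precisely $\overline A$ tame at $\pa$ means $\overline A/\pa\overline A$ is tame, and by Lemma~\ref{mars} this splits into the $\overline{A_\m}/\pa\overline{A_\m}$).

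The inclusion $C \subseteq C^\perp$ is the easy half: for $x,y \in \overline A$ we have $xy \in \overline A$, and by Lemma~\ref{longi}(ii) every order has trace landing in $R$, so $\mathrm{Tr}_{Q(A)/Q(R)}(xy) \in R$, i.e.\ $\langle x+A, y+A\rangle = 0$ in $I^{-1}/R$. Hence $C$ is isotropic, $C \subseteq C^\perp$.

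For $\pa C^\perp \subseteq C$, the strategy is to reduce to the local statement of Theorem~\ref{thm1} and Corollary~\ref{nutto}. Working with a single local $A$ tame at $\pa$: if $A = \overline A$ then $C = 0$, and $C^\perp = A^\dagger/A$, so I must show $\pa(A^\dagger/A) = 0$; but Theorem~\ref{thm1} gives $\pa A^\dagger \subseteq A$ exactly in this case, which is what $\pa(A^\dagger/A)=0$ means. If $A \subsetneq \overline A$, then $\overline A$ is itself a local order (being a discrete valuation ring, by Theorem~\ref{wuffer} applied to $\overline A$, which is integrally closed), with maximal ideal $\mathfrak n$ say, and $C = \overline A/A$. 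I would identify $C^\perp$: an element $z + A \in A^\dagger/A$ lies in $C^\perp$ iff $\mathrm{Tr}_{Q(A)/Q(R)}(z\,\overline A) \subseteq R$, i.e.\ iff $z \in \overline A^\dagger$ (the trace dual of $\overline A$). So $C^\perp = \overline A^\dagger/A$, and the claim becomes $\pa \overline A^\dagger \subseteq \overline A$. Since $\overline A$ is a discrete valuation ring that is tame at $\pa$, this is precisely the forward direction of Theorem~\ref{thm1} applied to the order $\overline A$ in place of $A$ (there $\m:\m = \overline A = \overline{\overline A}$, so $\pa \overline A^\dagger \subseteq \overline A$).

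The main obstacle will be the bookkeeping in the reduction to a single local order: I need to be careful that "$\overline A$ tame at $\pa$" descends correctly through completion and through the product decomposition $\overline A = \prod_\m \overline{A_\m}$, and that the trace form of Lemma~\ref{domino} is compatible with the block decomposition $A^\dagger/A \cong \bigoplus_\m (A^\dagger/A)_\m$ so that orthogonal complements can be computed block by block — the point being that distinct blocks are orthogonal for trace reasons (the idempotents are orthogonal), so $C^\perp = \bigoplus_\m (C_\m)^{\perp}$. Once that is in place, both inclusions follow from Theorem~\ref{thm1} (applied to $A$ in the first half and to $\overline A$ in the second) together with the elementary fact $C \subseteq C^\perp$ above.
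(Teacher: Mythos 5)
Your overall strategy is the same as the paper's: identify $C^{\perp}=\overline{A}^{\dagger}/A$, get $C\subseteq C^{\perp}$ from $\overline{A}\subseteq\overline{A}^{\dagger}$ (equivalently your trace argument via Lemma~\ref{longi}), and get $\pa C^{\perp}\subseteq C$ by reducing to the statement $\pa\overline{A}^{\dagger}\subseteq\overline{A}$, which is the forward implication of Theorem~\ref{maino} applied to $\overline{A}$. The paper states this directly and elides the localization bookkeeping; you spell it out, which in principle is the more careful route.

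However, in spelling it out you introduce a genuine error. After reducing to a local order $A$ with maximal ideal $\m$, you assert that $\overline{A}$ ``is itself a local order (being a discrete valuation ring, by Theorem~\ref{wuffer} applied to $\overline{A}$).'' This is false: the integral closure of a local order need not be local. For example, over $\Z_p$ take $A=\Z_p[x]/(x^2-p^2)$; then $A$ is local (since $A/pA\cong\F_p[x]/(x^2)$ has one maximal ideal), but $\overline{A}=\Z_p\times\Z_p$ has two. Theorem~\ref{wuffer} cannot be applied to $\overline{A}$ here because its hypotheses require a \emph{local} order; the theorem does not show that integrally closed implies local. So your case ``$A\subsetneq\overline{A}$'' does not reduce to a DVR the way you claim.

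The fix is not hard: having completed, decompose $\overline{A}$ over its \emph{own} maximal ideals, $\overline{A}=\prod_{\m'}\overline{A}_{\m'}$, where now each factor is a local, integrally closed order, hence a DVR by Theorem~\ref{wuffer}; tameness of $\overline{A}$ at $\pa$ passes to each factor by Proposition~\ref{weed}, and Theorem~\ref{thm1} gives $\pa\overline{A}_{\m'}^{\dagger}\subseteq\overline{A}_{\m'}$ for each $\m'$, whence $\pa\overline{A}^{\dagger}\subseteq\overline{A}$ by taking the product and descending through the completion (Lemma~\ref{cheat}). With that correction, your proof matches the intended argument.
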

\begin{proof}
A simple calculation shows that $C^{\perp}= \overline{A}^{\dagger}/A$. As
$\overline{A} \subseteq \overline{A}^{\dagger}$ by Lemma \ref{longi} we have $C
\subseteq C^{\perp}$. The tameness assumption on $\overline{A}$ implies by
Theorem \ref{maino} that $\pa \overline{A}^{\dagger} \subseteq \overline{A}$ and
hence that $\pa \left( R/I \right) \overline{A}^{\dagger}/A \subseteq
\overline{A}/A$ and hence $\pa \left( R/I \right) C^{\perp} \subseteq C$. 
\end{proof}

Notice that $R/I$ from the previous lemma is an artinian principal
ideal ring. 
This lemma forms the connection between the integral closure and anisotropy. We
recall some definitions from \cite{KO2} first. Let $(R',\m)$ be an artinian
local principal ideal ring and let $n$ be its length. Let $M$ be a finitely
generated $R'$-module. Let $N$ be an $R'$-module such that $N \cong_{R'} R'$ and
let $\bi: M \times M \to N$ be a non-degenerate symmetric $R'$-bilinear form.
The \emph{radical root} of $(M,\bi$) is now defined as
\[\mathrm{rr}(M)=\bigcap_{L \subseteq M:\ \m L^{\perp} \subseteq L \subseteq
L^{\perp}} L,\] where all $L$ are $R'$-modules. We define the \emph{lower root} of $M$ as follows:
\begin{eqnarray*}
\mathrm{lr}(M)= \sum_{k=0}^n \left( \m^k M \cap M[\m^k] \right),
\end{eqnarray*}
where $M[\m^k]=\{ x \in M: \m^k x=0 \}$. 
The form $\bi$ is called \emph{anisotropic} if the lower root of $M$ is the unique submodule $L$ of $M$ satisfying $\m L^{\perp} \subseteq L \subseteq L^{\perp}$. 
We remark that in \cite{KO2} it is shown how to calculate $\mathrm{lr}(M)$ and check if a form is anisotropic. In \cite{KO2} a formula is given for
$\mathrm{rr}(M)$ if $\mathrm{char}(R/\m) \neq 2$. We have the following lemma.

\begin{lemma} \label{or}
Assume that $\bi: M \times M \to N$ is non-degenerate. The following statements hold.
\begin{enumerate}
 \item 
  Suppose that $M$ is cyclic. Then $\bi$ is anisotropic.
 \item
 Suppose that $M$ is generated by two elements and $\mathrm{length}_{R'}(M)$ is odd. Then $\bi$ is anisotropic. 
\end{enumerate}
\end{lemma}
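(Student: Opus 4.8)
The plan is to reduce both statements to a structural classification of the submodules $L$ with $\m L^{\perp}\subseteq L\subseteq L^{\perp}$ and to show in each case that there is only one such $L$, which then must equal $\mathrm{lr}(M)$ since the lower root is always one of these (this last fact being established in \cite{KO2}). For part (i), when $M$ is cyclic, say $M\cong R'/\m^j$ for some $j\le n$, I would observe that every submodule $L\subseteq M$ is of the form $\m^i M$ for a unique $i$, and that under a non-degenerate form on a cyclic module the orthogonal complement of $\m^i M$ is again of this shape — concretely $(\m^i M)^{\perp}=\m^{j-i}M$, which one checks by a length count using non-degeneracy (the pairing identifies $M$ with $\Hom_{R'}(M,N)\cong M$). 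Then the condition $\m L^{\perp}\subseteq L\subseteq L^{\perp}$ becomes $\m^{j-i+1}M\subseteq \m^i M\subseteq \m^{j-i}M$, i.e.\ $j-i\le i$ and $i\le j-i+1$, which forces $i=\lceil j/2\rceil$. So $L$ is unique and $\bi$ is anisotropic.

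For part (ii), $M$ is generated by two elements, so $M\cong R'/\m^{a}\oplus R'/\m^{b}$ with $a\ge b$ and $a+b=\mathrm{length}_{R'}(M)$ odd, hence $a\ne b$ and in fact $a>b$. I would again analyze the $L$ satisfying $\m L^{\perp}\subseteq L\subseteq L^{\perp}$. The key point is that such an $L$ is, up to the $\m$-adic filtration, "half" of $M$: from $L\subseteq L^{\perp}$ and $\m L^{\perp}\subseteq L$ together with non-degeneracy one gets $\mathrm{length}(L^{\perp})-\mathrm{length}(L)\le \mathrm{length}(M/\m M)\le 2$, while $\mathrm{length}(L)+\mathrm{length}(L^{\perp})=\mathrm{length}(M)$ is odd, so the difference is odd, hence exactly $1$. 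One then argues, using the invariant-factor structure, that a self-perpendicular-up-to-$\m$ submodule of length $(\mathrm{length}(M)-1)/2$ with these properties is uniquely determined: its image in each invariant-factor quotient is forced, essentially $\m^{\lceil a/2\rceil}$ in the first factor and $\m^{\lceil b/2\rceil}$ in the second, the oddness of $a+b$ ensuring there is no "diagonal" alternative (an isotropic line could only appear when the two exponents interact in a way blocked by the parity). Thus $L=\mathrm{lr}(M)$ is unique and $\bi$ is anisotropic.

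The main obstacle I expect is part (ii): controlling how $L$ sits relative to \emph{both} summands simultaneously, since $L$ need not respect the direct-sum decomposition and the form need not be diagonal with respect to it. The honest way to handle this is to pass to the associated graded or to argue one graded piece at a time — look at $L/(\m L + (L\cap \m M))$ inside $M/\m M$, show that non-degeneracy of the induced pairing on $M/\m M$ (a two-dimensional space over the residue field, which is \emph{non-degenerate} only when $a=b$, and otherwise has a one-dimensional radical coming from the shorter factor) pins down the top layer of $L$, and then induct down the filtration, at each stage using that the parity obstruction prevents an isotropic vector from appearing in the relevant residue pairing. I would cite \cite{KO2} for the fact that $\mathrm{lr}(M)$ satisfies $\m\,\mathrm{lr}(M)^{\perp}\subseteq\mathrm{lr}(M)\subseteq\mathrm{lr}(M)^{\perp}$, so that once uniqueness is proved the identification $L=\mathrm{lr}(M)$ is automatic and no separate computation of the lower root is needed here.
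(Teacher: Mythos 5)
The paper does not actually prove this lemma: it simply refers to \cite{KO2}, Remark~5.3. So there is no ``paper's proof'' to compare against, and your attempt has to stand on its own.

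Your argument for part (i) is complete and correct: for $M\cong R'/\m^j$, all submodules are of the form $\m^iM$, the identity $\mathrm{length}(L)+\mathrm{length}(L^{\perp})=\mathrm{length}(M)$ gives $(\m^iM)^{\perp}=\m^{j-i}M$, and the two inclusions force $i=\lceil j/2\rceil$. (One small point worth recording: the length identity uses that $N\cong R'$ is an injective $R'$-module, so $\Hom(-,N)$ is exact; that is what makes $M/L^{\perp}\hookrightarrow\Hom(L,N)$ an isomorphism.)

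For part (ii), the opening reduction is correct and is the right first move: $L^{\perp}/L$ is killed by $\m$, is generated by at most two elements because $M$ is, so has $\kappa$-dimension at most $2$; combined with the oddness of $\mathrm{length}(M)=\mathrm{length}(L)+\mathrm{length}(L^{\perp})$ this forces $\mathrm{length}(L^{\perp}/L)=1$. From there, however, the argument stops being a proof. The claim that ``its image in each invariant-factor quotient is forced, essentially $\m^{\lceil a/2\rceil}$ in the first factor and $\m^{\lceil b/2\rceil}$ in the second'' is exactly the uniqueness statement you are trying to prove, and you give no reason why $L$ cannot be a ``twisted'' submodule that fails to split along the two factors; the form itself need not be diagonal with respect to the decomposition $R'/\m^a\oplus R'/\m^b$, and $L$ is certainly not required to be a direct sum of submodules of the factors. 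You recognize this yourself and sketch an induction down the $\m$-adic filtration using the (degenerate) residue pairing on $M/\m M$, but the key inductive step --- why the top graded layer of $L$ is pinned down by the one-dimensional radical of that pairing, and why the parity obstruction blocks an isotropic vector at every stage --- is left entirely to the reader. As it stands this is a plausible plan, not a proof: the uniqueness of $L$ is the entire content of the lemma in case (ii), and it has not been established. (A minor omission: you should also dispose of the degenerate case $b=0$, where $M$ is in fact cyclic and (ii) reduces to (i).)
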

\begin{proof}
 See \cite{KO2} Remark 5.3. 
\end{proof}

We can now give the connection between anisotropy and the integral closure.

\begin{theorem} \label{main5}
Suppose that $\overline{A}$ is tame at $\pa$. Let $B= A^{\dagger}/A$ and let
$I=\mathrm{Ann}_R(A^{\dagger}/A)$. Consider the form $\langle\,\, ,\,\rangle$
from Lemma \ref{domino}. Let $D \subset A^{\dagger}$ be such that $D/A=\mathrm{rr}(B)$.
Let $A[D]$ be the smallest ring inside $Q(A)$ containing $A$ and $D$. Then the
following statements hold.
\begin{enumerate}
\item
We have $\mathrm{rr}(B)=D/A \subseteq A[D]/A \subseteq \overline{A}/A $.
\item
Suppose that $\langle\,\, ,\,\rangle$ is anisotropic. Then $\overline{A}/A=\mathrm{lr}(B)$.  
\item
Suppose that $\mathrm{rr}(B)$ satisfies $\pa \left(R/I \right) \cdot
\mathrm{rr}(B)^{\perp} \subseteq \mathrm{rr}(B)$. Assume that $A[D]$ is tame at
$\pa$. Then $\overline{A}/A=A[D]/A$.
\end{enumerate}
\end{theorem}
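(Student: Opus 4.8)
The plan is to prove the three parts in order, bootstrapping each from the previous one, with the core mechanism being Lemma~\ref{domina} together with the characterization of $\mathrm{rr}$ as the intersection of the self-dual-up-to-$\pa$ submodules. For part (i), the inclusion $\mathrm{rr}(B) = D/A \subseteq \overline{A}/A$ should follow because, by Lemma~\ref{domina} applied to $C = \overline{A}/A$, the submodule $C$ is one of the $L$'s appearing in the intersection defining $\mathrm{rr}(B)$ (it satisfies $\pa C^{\perp} \subseteq C \subseteq C^{\perp}$ over the artinian principal ideal ring $R/I$), so $\mathrm{rr}(B) \subseteq C = \overline{A}/A$. Then $D \subseteq \overline{A}$, and since $\overline{A}$ is a ring containing $A$, the ring $A[D]$ it generates is also contained in $\overline{A}$; combined with the obvious $D/A \subseteq A[D]/A$ this gives the full chain $\mathrm{rr}(B) = D/A \subseteq A[D]/A \subseteq \overline{A}/A$.

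For part (ii), assume $\langle\,\,,\,\rangle$ is anisotropic. By definition of anisotropy, $\mathrm{lr}(B)$ is the \emph{unique} submodule $L$ of $B$ with $\pa L^{\perp} \subseteq L \subseteq L^{\perp}$. But $C = \overline{A}/A$ is such a submodule by Lemma~\ref{domina}, so uniqueness forces $\overline{A}/A = \mathrm{lr}(B)$. This part is essentially immediate once part (i)'s key observation (that $C$ lies in the relevant family) is in hand.

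Part (iii) is where the real work is. We are given the extra hypothesis that $\mathrm{rr}(B)$ itself satisfies $\pa(R/I)\,\mathrm{rr}(B)^{\perp} \subseteq \mathrm{rr}(B)$, and that $A[D]$ is tame at $\pa$. We already have $A[D]/A \subseteq \overline{A}/A$ from part (i), so it remains to show $\overline{A} \subseteq A[D]$, i.e.\ $A[D]$ is integrally closed. The strategy is to apply the tame criterion of Theorem~\ref{maino} to the order $A[D]$: it suffices to show $\pa (A[D])^{\dagger} \subseteq A[D]$. Now $(A[D])^{\dagger}$ is the trace dual of $A[D]$, and setting $D' = A[D]$ one computes, as in the proof of Lemma~\ref{domina}, that $(A[D]/A)^{\perp} = (A[D])^{\dagger}/A$ inside $B$ — here one must check that $A[D]/A \subseteq \mathrm{rr}(B)$ actually forces $A[D]/A = \mathrm{rr}(B)$, using that $A[D]/A$ lies between $D/A = \mathrm{rr}(B)$ and $\overline{A}/A$, together with part (i), so in fact $A[D]/A = \mathrm{rr}(B)$. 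Then $(A[D]/A)^{\perp} = \mathrm{rr}(B)^{\perp}$, and the hypothesis $\pa(R/I)\,\mathrm{rr}(B)^{\perp}\subseteq \mathrm{rr}(B) = A[D]/A$ translates, upon lifting back to $Q(A)$, into $\pa (A[D])^{\dagger} \subseteq A[D]$. By Theorem~\ref{maino} applied to $A[D]$ (legitimate since $A[D]$ is tame at $\pa$), this gives $A[D] = \overline{A[D]} = \overline{A}$, hence $\overline{A}/A = A[D]/A$ as claimed.

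The main obstacle I anticipate is the bookkeeping in part (iii) around the identification $(A[D])^{\dagger}/A = (A[D]/A)^{\perp}$ and the passage between the $R/I$-bilinear form on $B$ and the $Q(R)$-valued trace form on $Q(A)$: one needs the perpendicular-complement computation of Lemma~\ref{domina}'s proof to go through verbatim with $\overline{A}$ replaced by the order $A[D]$, which requires knowing $A[D]$ is genuinely an order with $Q(A[D]) = Q(A)$ (clear, as $A \subseteq A[D] \subseteq \overline{A}$) and that its trace dual sits inside $A^{\dagger}$. The equality $A[D]/A = \mathrm{rr}(B)$ — rather than mere containment — is the other point that must be nailed down before the hypothesis on $\mathrm{rr}(B)$ can be fed into Theorem~\ref{maino}; this follows by squeezing $A[D]/A$ between $\mathrm{rr}(B)$ and $\overline{A}/A$ and invoking part (i) once more, but it is worth stating explicitly.
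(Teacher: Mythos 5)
Parts (i) and (ii) of your plan are correct and essentially identical to the paper's argument: $\overline{A}/A$ belongs to the family of submodules $L$ with $\pa L^{\perp}\subseteq L\subseteq L^{\perp}$ by Lemma~\ref{domina}, so it contains $\mathrm{rr}(B)$, and anisotropy makes this submodule unique.

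Part (iii) has a genuine gap. You try to establish the equality $A[D]/A = \mathrm{rr}(B)$ by ``squeezing $A[D]/A$ between $\mathrm{rr}(B) = D/A$ and $\overline{A}/A$ and invoking part (i) once more,'' but this is a non-sequitur: knowing that $\mathrm{rr}(B) \subseteq A[D]/A \subseteq \overline{A}/A$ gives no reason for the first inclusion to be an equality, and indeed $A[D]$ (the ring generated by $D$ over $A$) can be strictly larger than $D$. The conclusion of the theorem is $\overline{A}/A = A[D]/A$, not $\overline{A}/A = \mathrm{rr}(B)$, which is a hint that the intermediate equality you want is not available. The paper avoids it entirely: one only needs the chain of \emph{inclusions}
\begin{eqnarray*}
\mathrm{rr}(B) \subseteq A[D]/A \subseteq (A[D]/A)^{\perp} \subseteq \mathrm{rr}(B)^{\perp},
\end{eqnarray*}
where the middle inclusion holds because $A[D]$ is an order (so $A[D]\subseteq A[D]^{\dagger}$, hence $A[D]/A \subseteq A[D]^{\dagger}/A = (A[D]/A)^{\perp}$) and the last because $\perp$ reverses inclusions. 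Combined with the hypothesis $\pa\,\mathrm{rr}(B)^{\perp}\subseteq\mathrm{rr}(B)$, this yields $\pa\,(A[D]/A)^{\perp}\subseteq \pa\,\mathrm{rr}(B)^{\perp}\subseteq\mathrm{rr}(B)\subseteq A[D]/A$, i.e.\ $\pa\,A[D]^{\dagger}\subseteq A[D]$. Then Theorem~\ref{maino} applies to the tame order $A[D]$ exactly as in your plan. So the endpoint (feed the containment into Theorem~\ref{maino}) is right, but the route through the unproved (and generally false) equality $A[D]/A=\mathrm{rr}(B)$ must be replaced by the one-sided containment argument above.
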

\begin{proof} 
i. We have $\mathrm{rr}(B)=D/A \subseteq \overline{A}/A$ by Lemma \ref{domina}.
As $\overline{A}$ is a ring, it follows that $A[D] \subseteq \overline{A}$. 

\noindent ii. We directly obtain the result by definition of anisotropy and Lemma \ref{domina}. 

\noindent iii. We know that $A[D]/A \subseteq \overline{A}/A$ by
i. Notice that
\begin{eqnarray*}
\mathrm{rr}(B) \subseteq A[D]/A \subseteq \left( A[D]/A \right)^{\perp} \subseteq \mathrm{rr}(B)^{\perp}. 
\end{eqnarray*}
Hence $\pa \left( A[D]/A \right)^{\perp} \subseteq A[D]/A$.
As $A[D]$ is an order which is tame at $\pa$ and $(A[D]/A)^{\perp}=A[D]^{\dagger}/A$, we can apply Theorem \ref{maino}
to see that $\overline{A}/A=A[D]/A$. 
\end{proof}

Later, in Theorem \ref{nano}, we will see that under certain hypotheses we have the surprising equality $A[D]=D$. 

\section{A sufficient condition for tameness} \label{ta}

In this section we will prove a condition which implies tameness and is easy to check. Recall that $R$ is a discrete valuation ring with prime
$\pa=(\pi)$ and $A$ is an order over $R$.

\begin{theorem} \label{tammy2}
Let $B=A^{\dagger}/A$. Let $A'$ be an $R$-order with $A \subseteq A' \subseteq \overline{A}$. Then $A'$ is tame at $\pa$ if for all maximal ideals $\m
\subset A$ we have $\mathrm{dim}_{R/\pa}(B_{\m}/\pa B_{\m})<\mathrm{char}(R/\pa)$ or $\mathrm{char}(R/\pa)=0$. Furthermore, the dimensions of $B/\pa
B$ and the trace radical of $A/\pa A$ over $R/\pa$ are equal.
\end{theorem}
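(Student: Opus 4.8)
The plan is to reduce to the local case and then play off the trace formula (Lemma \ref{tracef}) against Theorem \ref{maino} (and its surrounding analysis). First I would localize and complete: by Lemma \ref{loop} and Lemma \ref{cheat}, the tameness of $A'$ at $\pa$, the module $B = A^\dagger/A$, and the quantities $\dim_{R/\pa}(B_\m/\pa B_\m)$ are all unchanged under completion, and by Theorem \ref{ks} (via Lemma \ref{cheata}) the completion of $A$ decomposes as a product of complete local orders $\hat{A_\m}$ indexed by the maximal ideals $\m \supseteq \pa A$. Since trace forms, trace radicals, and $(A^\dagger/A)$ are all compatible with this product decomposition (Lemma \ref{sor}), it suffices to prove both assertions for a local order. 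So assume $A$ is local with maximal ideal $\m$.

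Next I would identify the trace radical of $A/\pa A$ over $R/\pa$. Write $k = R/\pa$ and $\kappa = A/\m$. The key computation — already carried out inside the proof of Theorem \ref{maino} — is that, because $\pa = (\pi)$ is principal, $\pa A^\dagger \cap A = \{y \in A : \mathrm{Tr}_{Q(A)/Q(R)}(yA) \subseteq \pa\}$; reducing mod $\pa A$, this set maps onto the radical $(A/\pa A)^\perp$ of the bilinear form $A/\pa A \times A/\pa A \to k$. Hence $(A/\pa A)^\perp \cong (\pa A^\dagger \cap A)/\pa A$. Now there is a short exact sequence $0 \to (\pa A^\dagger \cap A)/\pa A \to \pa A^\dagger/\pa A \to \pa A^\dagger/(\pa A^\dagger \cap A) \to 0$, and multiplication by $\pi$ gives an $R$-isomorphism $A^\dagger/A \xrightarrow{\sim} \pa A^\dagger/\pa A$, while $\pa A^\dagger/(\pa A^\dagger \cap A) \cong (A + \pa A^\dagger)/A \subseteq A^\dagger/A$; but this last submodule is exactly $\pa B$ in the notation of Corollary \ref{nutto}. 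Comparing $k$-dimensions:
\begin{eqnarray*}
\dim_k (A/\pa A)^\perp &=& \dim_k B - \dim_k \pa B \\
&=& \dim_k (B/\pa B),
\end{eqnarray*}
where the last equality is the standard fact that for a finitely generated module over the DVR $R$ the dimension of $B[\pa]$ (which by the structure theorem equals $\dim_k B/\pa B$, the minimal number of generators) coincides with $\dim_k B - \dim_k \pa B$. This proves the last sentence of the theorem; I expect this bookkeeping with the exact sequences to be the main obstacle, mostly in being careful that all the maps are the natural ones and that $\pa B$ really is the image of $\pa A^\dagger$ in $B$.

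Finally I would deduce tameness. Let $A'$ be any order with $A \subseteq A' \subseteq \overline{A}$ and suppose $\mathrm{char}(k) = 0$ or $\dim_k(B/\pa B) < \mathrm{char}(k)$. If $\mathrm{char}(k) = 0$ there is nothing to prove (Proposition \ref{weed}). Otherwise, suppose for contradiction that $A'$ is wild at $\pa$, i.e.\ $A'/\pa A'$ is wild over $k$. By Proposition \ref{weed} applied to the finite $k$-algebra $A'/\pa A'$, its trace radical has $k$-dimension $\geq \mathrm{char}(k)$. On the other hand, the trace radical of $A'/\pa A'$ over $k$ has dimension $\dim_k(B'/\pa B')$ where $B' = A'^\dagger/A'$, by the first part of the theorem applied to $A'$ in place of $A$. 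So it suffices to show $\dim_k(B'/\pa B') \leq \dim_k(B/\pa B)$. This is because $A \subseteq A'$ forces $A'^\dagger \subseteq A^\dagger$, hence $B' = A'^\dagger/A'$ is a subquotient of $B = A^\dagger/A$ compatible with the $R$-module structure — more precisely there is a surjection $B = A^\dagger/A \twoheadrightarrow A^\dagger/A' \twoheadleftarrow$? — here I would instead argue via lengths: $\mathrm{length}_R B = \mathrm{length}_R B' + 2\,\mathrm{length}_R(A'/A)$ by the standard duality $\mathrm{length}_R(A^\dagger/A'^\dagger) = \mathrm{length}_R(A'/A)$ coming from Lemma \ref{domino}, and the minimal number of generators can only drop under passing to $A'$ since $A'^\dagger/A' $ is a quotient of $A^\dagger/A'$ which is a quotient of $A^\dagger/A$ modulo the submodule $A'/A$; a quotient needs no more generators than the original. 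Thus $\dim_k(B'/\pa B') \le \dim_k(B/\pa B) < \mathrm{char}(k)$, contradicting $\dim_k(B'/\pa B') \geq \mathrm{char}(k)$. Hence $A'$ is tame at $\pa$.
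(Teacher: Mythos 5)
Your proposal follows essentially the same route as the paper: reduce to the complete local case, identify the trace radical of $A/\pa A$ with $(\pa A^{\dagger}\cap A)/\pa A$ using the computation from Theorem \ref{maino}, compute its $k$\dash dimension in terms of $B$, reduce tameness of $A'$ to $A$ via a dimension inequality, and invoke Proposition \ref{weed}. One slip worth fixing: since $A'^{\dagger}\subseteq A^{\dagger}$, the module $B'=A'^{\dagger}/A'$ is a \emph{submodule} of $A^{\dagger}/A'$, not a quotient of it; so $B'$ is a submodule of a quotient of $B$, and the inequality $\dim_k(B'/\pa B')\le\dim_k(B/\pa B)$ needs the fact that over a DVR a submodule of a finitely generated module is generated by no more elements than the ambient module, not merely that quotients need no more generators. (The paper phrases this as ``$B'_{\m'}$ is a quotient of a submodule of $B_{\m}$'', handled via Lemma \ref{mars}, which is the same subquotient claim stated correctly.) Also, when you apply the dimension formula to $A'$, note that $A'$ need not be local even when $A$ is; one either reinvokes the product decomposition for $A'$ (as the paper does via Lemma \ref{mars}) or proves the dimension formula for arbitrary orders before localizing.
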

\begin{proof}

After tensoring with $\hat{R_{\pa}}$, we may assume that $R$ is a complete discrete valuation ring and that $A=\prod_{\m \in
\mathrm{Maxspec}(A)} A_{\m}$ and $A'= \prod_{\m \in \mathrm{Maxspec}(A)} A'_{\m}$ (Lemma \ref{mars}). Let $B'=A'^{\dagger}/A'$ and let $\m' \in
\mathrm{Maxspec}(A')$ lying above $\m=A \cap \m'$. By Lemma \ref{sor} we have $B_{\m}=A_{\m}^{\dagger}/A_{\m}$ and
$B'_{\m'}=A_{\m'}^{\prime \dagger}/A'_{\m'} \subseteq A_{\m}^{\prime \dagger}/A'_{\m}$ (Lemma \ref{mars}). We have a natural injective map 
\begin{eqnarray*}
A_{\m}^{\prime \dagger}/A'_{\m} \to A_{\m}^{\dagger}/A'_{\m} \cong \left( A_{\m}^{\dagger}/A_{\m} \right)/ \left( A_{\m}'/A_{\m} \right),
\end{eqnarray*}
which shows that $B'_{\m'}$ is a
quotient of a submodule of $B_{\m}$ and this shows that $\mathrm{dim}_{R/\pa}(B'_{\m'}/\pa B'_{\m'}) \leq \mathrm{dim}_{R/\pa}(B_{\m}/\pa B_{\m})$.
Hence we can assume
that $A=A'$. 

If $\mathrm{char}(R/\pa)=0$, then $A/\pa A$ will be automatically tame. 

Assume that $\mathrm{char}(R/\pa) \neq 0$. Assume that $A/\pa A$ is wild. We have that \[A/\pa A=\bigoplus_{\m \supseteq \pa A} \left(A/\pa A
\right)_{\m}=\bigoplus_{\m \supset \pa A} A_{\m}/\pa A_{\m}\] (exactness of localization and Theorem 2.13 from \cite{EI}). It follows
that there is a prime $\m$ such that $A_{\m}/\pa A_{\m}$ is wild. By Lemma \ref{sor} we may assume that $A$ is local. Let $C=A/\pa A$, which
we assume to be wild over $R/\pa$. Then it follows that
$\mathrm{dim}_{R/\pa}(C^{\perp}) \geq
\mathrm{char}(R/\pa)$ (Proposition \ref{weed}). For $x \in A$ we have $x+\pa A \in C^{\perp}$ iff
$\mathrm{Tr}_{Q(A)/Q(R)}(xA) \subseteq (\pi)$ iff $\mathrm{Tr}_{Q(A)/Q(R)}(\frac{x}{\pi}A) \subseteq A$ iff $\frac{x}{\pi} \in A^{\dagger}$ iff $x \in
\pi A^{\dagger}=\pa A^{\dagger}$.  Hence
\begin{eqnarray*}
C^{\perp} = \left( \pa A^{\dagger} \cap A \right) / \pa A.  
\end{eqnarray*} 
We have
\begin{eqnarray*}
\left( \pa A^{\dagger} \cap A \right)/\pa A &=& \left( \pi A^{\dagger} \cap A \right)/\pi A \\
&\cong & \left(A^{\dagger} \cap \pi^{-1} A \right)/A \\
&=& B[\pa].
\end{eqnarray*}
Finally consider the following exact sequence:
\[
\xymatrix{ 
0 \ar[r]^{} & B[\pa] \ar[r] & B \ar[r]^{\cdot \pi} & B \ar[r] & B/\pa B \ar[r]& 0.
}
\] 
The length as $R$-module is an additive function (\cite{AT}, Proposition 6.9). Hence $\mathrm{length}_R(B[\pa])=\mathrm{length}_R(B/\pa B)$, and both
lengths are their dimensions over $R/\pa$. So if $A$ is wild at $\pa$ we have 
\begin{eqnarray*}
\mathrm{dim}_{R/\pa}(B/\pa B) &=& \mathrm{dim}_{R/\pa}(B[\pa]) \\
&=& \mathrm{dim}_{R/\pa}(C^{\perp}) \\
&\geq& \mathrm{char}(R/\pa),  
\end{eqnarray*}
and this concludes the proof.
\end{proof}

\begin{remark}
As $B/\pa B= \bigoplus_{\m \supset \pa A} \left( B/\pa B \right)_{\m}=\bigoplus_{\m \supset \pa A} B_{\m}/\pa B_{\m}$, the condition in
the above theorem is satisfied if $\mathrm{dim}_{R/\pa}(B/\pa B)<\mathrm{char}(R/\pa)$ 
\end{remark}

We can finally prove Theorem \ref{1} from the introduction.

\begin{proof}[Proof of Theorem \ref{1}.]
Use Lemma \ref{floop} to reduce to the case where we work over the localization of $\Z$ at $p$. Now use Theorem \ref{maino} in combination with
Theorem \ref{tammy2}. Here we remark that for $B=A^{\dagger}/A$ we have $\mathrm{dim}_{\Z/p\Z}(B/pB) \leq [K:\Q]$.
\end{proof}

\begin{theorem} \label{zoss}
 Let $B=A^{\dagger}/A$ and suppose that $2 \neq \mathrm{char}(R/\pa)$. Suppose that one of the following conditions is satisfied:
\begin{enumerate}
 \item 
 $B$ is cyclic as an $R$-module;
 \item
 $B$ is generated as an $R$-module by two elements and $\mathrm{length}_R(B)$ is odd.
\end{enumerate}
Then $\overline{A}/A=\mathrm{lr}(A^{\dagger}/A)$. 
\end{theorem}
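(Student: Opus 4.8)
The plan is to reduce Theorem~\ref{zoss} to Theorem~\ref{main5}(ii). That theorem says: if $\overline{A}$ is tame at $\pa$ and the form $\bi$ on $B = A^{\dagger}/A$ from Lemma~\ref{domino} is anisotropic, then $\overline{A}/A = \mathrm{lr}(B)$. So I need to verify two hypotheses: that $\overline{A}$ is tame at $\pa$, and that $\bi$ is anisotropic.

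For anisotropy, I would invoke Lemma~\ref{or}. First I must set up the correct context: let $I = \mathrm{Ann}_R(B)$, so that $R/I$ is an artinian local principal ideal ring (as remarked after Lemma~\ref{domina}), and $\bi: B \times B \to I^{-1}/R$ is a non-degenerate symmetric $R/I$-bilinear form with $I^{-1}/R \cong_R R/I$. Now in case (i), $B$ is cyclic as an $R$-module, hence cyclic as an $R/I$-module, so Lemma~\ref{or}(i) applies and $\bi$ is anisotropic. In case (ii), $B$ is generated by two elements as an $R$-module, hence as an $R/I$-module, and $\mathrm{length}_R(B) = \mathrm{length}_{R/I}(B)$ is odd, so Lemma~\ref{or}(ii) applies. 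In both cases we also need $2 \neq \mathrm{char}(R/\pa) = \mathrm{char}(R/I \bmod \m)$, which is our standing hypothesis; this is what makes the notion of anisotropy from \cite{KO2} behave as stated (the residue characteristic of the artinian local ring $R/I$ is the same as that of $R$).

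For tameness of $\overline{A}$ at $\pa$, I would apply Theorem~\ref{tammy2} with $A' = \overline{A}$: it suffices to check that for every maximal ideal $\m \subset A$ we have $\dim_{R/\pa}(B_{\m}/\pa B_{\m}) < \mathrm{char}(R/\pa)$, or that $\mathrm{char}(R/\pa) = 0$. If $\mathrm{char}(R/\pa) = 0$ there is nothing to do. Otherwise, note that $\dim_{R/\pa}(B/\pa B) \le \mathrm{length}_R(B)$, and in case (i) $B$ is cyclic so $\dim_{R/\pa}(B/\pa B) \le 1 < 2 \le \mathrm{char}(R/\pa)$ (using $\mathrm{char}(R/\pa) \neq 2$, hence $\ge 3$, when the characteristic is positive — more carefully, $\dim_{R/\pa}(B/\pa B) \le 1$ and positive residue characteristic is always $\ge 2$, so the inequality $1 < \mathrm{char}$ holds outright). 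In case (ii), $B$ is generated by two elements so $\dim_{R/\pa}(B/\pa B) \le 2$; since $\mathrm{char}(R/\pa) \neq 2$ and is positive, it is $\ge 3 > 2$. Either way the hypothesis of Theorem~\ref{tammy2} holds (each localized dimension $\dim_{R/\pa}(B_{\m}/\pa B_{\m})$ is bounded by $\dim_{R/\pa}(B/\pa B)$ via the decomposition $B/\pa B = \bigoplus_{\m} B_{\m}/\pa B_{\m}$), so $\overline{A}$ is tame at $\pa$.

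The main obstacle, such as it is, is purely bookkeeping: making sure that "cyclic / two-generated / odd length as an $R$-module" transfers correctly to the corresponding statement over the artinian ring $R/I$ that appears in the definition of anisotropy, and that the residue characteristic condition is exactly the one needed for Lemma~\ref{or}. Once the hypotheses of Theorems~\ref{tammy2} and~\ref{main5}(ii) are both in place, the conclusion $\overline{A}/A = \mathrm{lr}(A^{\dagger}/A)$ is immediate. I do not anticipate any genuine difficulty beyond assembling these pieces.
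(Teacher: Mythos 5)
Your proposal is correct and follows exactly the paper's proof: the paper also establishes tameness via Theorem~\ref{tammy2}, anisotropy via Lemma~\ref{or}, and then applies Theorem~\ref{main5}. The only difference is that you spell out the small bookkeeping steps (bounding $\dim_{R/\pa}(B_{\m}/\pa B_{\m})$, transferring cyclicity and length from $R$ to $R/I$) that the paper leaves implicit.
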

\begin{proof}
 Theorem \ref{tammy2} shows that we are in a tame case. Now combine Lemma \ref{or} and Theorem \ref{main5}.
\end{proof}

One can show that case i in the above lemma can never happen if $\mathrm{char}(R/\pa)=2$ (see \cite{KO1} Lemma 5.4.2). 

We can now also prove Theorem \ref{2}.

\begin{proof}[Proof of Theorem \ref{2}.]
 Use Lemma \ref{floop} to reduce to the local case. Now use Theorem \ref{zoss} to finish the proof. 
\end{proof}

\begin{example}
 Theorem \ref{2} is false if $2|\#G$. Let $A=\Z[\sqrt{5}]$. Then we have $A^{\dagger}/A \cong \Z/2\Z \times \Z/2 \cdot 5 \Z$, but
$\overline{A}=\Z[\frac{1+\sqrt{5}}{2}] \supsetneq A$. 
\end{example}

\section{Galois orders}

In this section let $R$ be a Dedekind domain (not necessarily a discrete valuation ring) and let $A$ be an order over $R$. We will present
another condition for tameness in the case that a group $G$ acts in a nice way on $A$ (as will be explained later).

\begin{definition}
Let $S$ be a nonzero $K$-algebra where $K$ is a field and let $G$ be a group acting on $A$ through $K$-automorphisms. Then $S$ is called a
\emph{finite Galois algebra} over $K$ if $S$ is a finite \'etale $K$-algebra, $\# G=\mathrm{dim}_K(S)$ and $S^G=K$. 
\end{definition}

\begin{remark}
 There are many other equivalent definitions of finite Galois algebras. One of the statements is that $S$ is a Galois algebra with group $G$ if
and only if $S$ is isomorphic as a $K$-algebra with $G$-action to $_H\mathrm{Map}(G,L)$ where $L/K$ is a Galois extension with group $H$ together with
an embedding $H \to G$.
\end{remark}

\begin{remark} \label{5}
Let $S$ be a finite Galois algebra over $K$ with group $G$. Let $K \to K'$ be a morphism of fields. Then $S \otimes_K K'$ is still a
finite Galois algebra over $K'$ with group $G$. 
\end{remark}

\begin{definition}
Let $G$ be a finite group acting on $A$ by $R$-algebra automorphisms. Then $G$ acts naturally on $Q(A)=A \otimes_R Q(R)$ by $Q(R)$-algebra
automorphisms. We call $A$ a \emph{Galois order} over $R$ with group $G$ if $Q(A)$ together with $G$ is a finite Galois algebra over $Q(R)$. Remark
that in such a case we have $A^G=Q(A)^G \cap A=Q(R) \cap A=R$.
\end{definition}

\begin{example}
 Let $K$ be a number field which is Galois over $\Q$ with group $G$. Then any order $A$ stable under $G$ is a Galois order with group $\{g|_A: g \in
G\}$. 
\end{example}

For a prime $\qa \subset A$ lying over $\pa \subset R$ 

\begin{definition}
Let $\qa \subset A$ be a prime lying over $\pa \subset R$. We define define the \emph{decomposition group} of $\qa$ over $\pa$ to be $G_{\qa/\pa}=\{g
\in G: g(\qa)=\qa \}$. Consider the natural map $\varphi: G_{\qa/\pa} \to \mathrm{Aut}_{R/\pa}(A/\qa)$. Then we define the
\emph{inertia group} of
$\qa$ over $\pa$ to be $I_{\qa/\pa}=\mathrm{ker}(\varphi) \subseteq G_{\qa/\pa}$.  
\end{definition}

\begin{lemma} \label{sos}
Let $B$ be a commutative ring and $G \subseteq \mathrm{Aut}(B)$ a finite group. If $\varphi, \psi: B \to k$ are ring momorphisms to a domain $k$ that
coincide on $B^G$, then $\varphi=\psi \circ g$ for some $g \in G$.
\end{lemma}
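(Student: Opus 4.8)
The plan is to use the classical trick of separating points by ring homomorphisms via a pigeonhole/avoidance argument on kernels. Observe first that $\varphi$ and $\psi$ factor through $B/P$ and $B/P'$ respectively, where $P=\ker(\varphi)$ and $P'=\ker(\psi)$ are prime ideals of $B$. Since $G$ is finite, the set $\{g^{-1}(P') : g \in G\}$ is a finite set of primes; if I can show that $P \supseteq g^{-1}(P')$ for some $g \in G$, then in fact $P = g^{-1}(P')$ would follow once we know $g^{-1}(P')$ is maximal among the primes in question — but more directly, I will aim to show $P \supseteq g^{-1}(P')$ for some $g$ and then upgrade to an honest factorization.

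The key step is the following: consider the element-wise argument. Suppose for contradiction that for every $g \in G$ there is some $b_g \in B$ with $\varphi(b_g) \neq 0$ but $\psi(g(b_g)) = 0$, i.e. $g(b_g) \in P'$, i.e. $b_g \in g^{-1}(P')$. Then the product $b = \prod_{g \in G} b_g$ satisfies $\varphi(b) = \prod_g \varphi(b_g) \neq 0$ since $k$ is a domain, while for every $g \in G$ we have $g(b) = \prod_{h \in G} g(b_h)$, and the factor with $h = g$ lies in $g^{-1}(P')$... wait, I need $g(b) \in P'$; the factor $g(b_g)$ lies in $P'$ by construction, hence $g(b) \in P'$ as $P'$ is an ideal. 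Now form the $G$-invariant element $z = \prod_{g \in G} g(b) \in B^G$. On one hand $\psi(z) = \prod_g \psi(g(b)) = 0$ because each $\psi(g(b)) = 0$. On the other hand $\varphi(z) = \prod_g \varphi(g(b))$, and I would like this to be nonzero. That is the subtle point: I only know $\varphi(b) \neq 0$, not $\varphi(g(b)) \neq 0$ for all $g$. To fix this, I should instead run the argument more carefully: for the fixed $b$ with $\varphi(b) \neq 0$ and $g(b) \in P'$ for all $g$, the element $z = \prod_{g \in G} g(b)$ lies in $B^G$, so $\varphi(z) = \psi(z) = 0$; but $b$ itself appears (take $g = e$) as a factor of $z$, and... no, a single nonzero factor does not make the product nonzero. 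The right move: replace the single "bad" witness by choosing, for each $g$, the element $b_g$ and setting $c = \prod_{g} g(b_g)$; then $c \in B^G$? Not quite either.

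Let me restructure: the cleanest route is to show $\bigcap_{g \in G} g^{-1}(P') \subseteq P$ fails for some single $g$, i.e. $P \supseteq g^{-1}(P')$ for some $g$, by prime avoidance. Indeed, the ideal $I = \bigcap_{g\in G} g^{-1}(P')$ satisfies: for $a \in I$, the invariant element $N(a) = \prod_{g} g(a) \in B^G$ lies in $g^{-1}(P')$ for every $g$ (as each $g(a) \in P'$, wait we need $g(a)\in P'$: since $a \in I \subseteq g^{-1}(P')$... yes $a \in h^{-1}(P')$ for all $h$, so $h(a) \in P'$ for all $h$, so $N(a) \in P'$ as well as $N(a) = g(N(a)) \in g^{-1}(P')$... the point is simply $N(a) \in P'$, hence $\psi(N(a))=0$, hence $\varphi(N(a))=0$, i.e. $\prod_g \varphi(g(a)) = 0$ in the domain $k$, so $\varphi(g(a)) = 0$ for some $g$, i.e. $g(a) \in P$, i.e. $a \in g^{-1}(P)$. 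This shows $I \subseteq \bigcup_{g\in G} g^{-1}(P)$, and each $g^{-1}(P)$ is prime, so by prime avoidance $I \subseteq g^{-1}(P)$ for some single $g$, hence $g^{-1}(P') \subseteq I \subseteq g^{-1}(P)$... hmm I have $I \subseteq \bigcap$, that's the wrong direction. Let me just say: by prime avoidance applied to the prime $g^{-1}(P')$... The hard part will be getting the containment direction right, and the fix is to work symmetrically: show $P' \subseteq g(P)$ for some $g$ by the above $N(a)$-argument with roles swapped, and then since these are kernels of maps to a domain the reverse containment and hence equality $P' = g(P)$, equivalently $g^{-1}(P') = P$, follows from $B/g^{-1}(P') \hookrightarrow k$ and $B/P \hookrightarrow k$ sharing the composite structure; concretely, once $P \subseteq g^{-1}(P')$ (or vice versa), the induced surjection $B/P \twoheadrightarrow B/g^{-1}(P')$ between subrings of the domain $k$ forces $\psi \circ g$ and $\varphi$ to agree after identifying, giving $\varphi = \psi \circ g$ directly. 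I would write this up by: (1) reduce to primes $P = \ker\varphi$, $P' = \ker\psi$; (2) prove the norm-element lemma $a \in \bigcap_g g^{-1}(P') \Rightarrow a \in \bigcup_g g^{-1}(P)$ using $B^G$-agreement and that $k$ is a domain; (3) apply prime avoidance to extract a single $g$ with $P' \subseteq g(P)$; (4) conclude equality and hence $\varphi = \psi\circ g$ from the domain hypothesis. The main obstacle is step (2)–(3): making sure the prime-avoidance extraction yields a genuine ideal containment rather than just a set-theoretic union statement, which is exactly what the classical lemma "a prime contained in a finite union of ideals is contained in one of them" handles.
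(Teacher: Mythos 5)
Your concluding step (4) contains a genuine gap. Even after you arrange that $P=\ker\varphi$ and $g^{-1}(P')=\ker(\psi\circ g)$ coincide (which itself needs the two\dash sided prime\dash avoidance plus a finite\dash order argument that you only gesture at), you cannot deduce $\varphi=\psi\circ g$ from equality of kernels. Two ring homomorphisms into a domain with the same kernel need not be the same map, even when they agree on a subring over which the source is integral: take $B=\Z[\sqrt2]$, $G=\mathrm{Gal}(\Q(\sqrt2)/\Q)$ acting on $B$ with $B^G=\Z$, and let $\varphi,\psi\colon B\to\R$ be the two real embeddings. They agree on $B^G$, both have kernel $(0)$, yet $\varphi\neq\psi$. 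The lemma of course holds there because $\psi$ is $\varphi$ composed with the nontrivial element of $G$ --- but that element is not produced by matching kernels; one still has to compare the two induced embeddings $B/P\hookrightarrow k$ and realize the difference as coming from the decomposition group $G_P=\{g\in G:g(P)=P\}$. Your phrase ``the induced surjection $B/P\twoheadrightarrow B/g^{-1}(P')$ \ldots forces $\psi\circ g$ and $\varphi$ to agree after identifying'' is exactly where the missing work lies; as written it is not a surjection between the two subrings of $k$, and no agreement is forced.

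There is also a secondary problem in the prime\dash avoidance step: detouring through $I=\bigcap_g g^{-1}(P')$ produces containments in the wrong direction, as you note yourself. The clean version applies the norm trick to $a\in P$: then $N(a)=\prod_{g}g(a)\in B^G\cap P$, so $\varphi(N(a))=0$, hence $\psi(N(a))=0$, hence $g(a)\in P'$ for some $g$ as $k$ is a domain; thus $P\subseteq\bigcup_g g^{-1}(P')$, prime avoidance gives $P\subseteq g^{-1}(P')$ for a single $g$, and the symmetric argument with the finiteness of $G$ upgrades this to equality. But even with this cleaned up, the residue\dash level gap above remains. Note that the paper does not give its own proof of this lemma; it cites Stevenhagen, \emph{The Arithmetic of Number Rings}, Lemma 15.1. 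The standard self\dash contained argument avoids kernels altogether: from $\prod_{\sigma\in G}(T-\sigma(b))\in B^G[T]$ and the domain hypothesis one gets, for every $b$, some $\sigma\in G$ with $\varphi(b)=\psi(\sigma(b))$; applying this over $k[t]$ to $\sum_i t^{i-1}b_i$ shows that for every finite family $b_1,\dots,b_n$ a single $\sigma$ works simultaneously, and then finiteness of $G$ (the finite\dash intersection property of the nonempty sets $G_b=\{\sigma:\varphi(b)=\psi(\sigma(b))\}$) yields one $\sigma$ working for all of $B$.
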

\begin{proof}
 See \cite{ST}, Lemma 15.1 for an elegant proof. 
\end{proof}

\begin{lemma} \label{43}
Let $A$ be a Galois order with group $G$ over $R$. Let $\pa \subset R$ be prime and let $\qa \subset A$ be a prime lying above $R$. Then the following
statements hold:
\begin{enumerate}
\item
The group $G$ acts transitively on the set of primes of $A$ lying above $\pa$. 
\item
The map $\varphi: G_{\qa/\pa} \to \mathrm{Aut}_{R/\pa}(A/\qa)$ is surjective.
\item
The extension $A/\qa$ over $R/\pa$ is normal. 
\end{enumerate}
\end{lemma}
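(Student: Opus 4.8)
The plan is to deduce all three statements from Lemma \ref{sos}, which is the crucial input, applied to the Galois algebra $Q(A)$ with group $G$. First I would reduce everything to a statement about ring homomorphisms from $A$ (or $Q(A)$) to a domain. For a prime $\qa \subset A$ lying above $\pa$, let $k$ be an algebraic closure of the fraction field of the domain $A/\qa$; then the quotient map $A \to A/\qa \hookrightarrow k$ is such a homomorphism, and since $A^G = R$ by the remark following the definition of Galois order, the composite $A \to k$ restricted to $R$ factors through $R/\pa$.

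For (i): given two primes $\qa_1, \qa_2$ above $\pa$, pick an algebraically closed field $k$ large enough to receive both residue maps $\varphi_i : A \to A/\qa_i \hookrightarrow k$; after conjugating one of the embeddings of the residue fields into $k$ we may assume $\varphi_1$ and $\varphi_2$ agree on $R$, hence on $A^G = R$. But here I must be slightly careful: Lemma \ref{sos} is stated for $G \subseteq \mathrm{Aut}(B)$, so I would apply it with $B = A$ and the action of $G$ on $A$ (which is by $R$-algebra automorphisms, so $A^G = R$ indeed). Lemma \ref{sos} then gives $g \in G$ with $\varphi_1 = \varphi_2 \circ g$, and comparing kernels yields $\qa_1 = g^{-1}(\qa_2)$, i.e. $g(\qa_1) = \qa_2$ after renaming. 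This proves transitivity.

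For (ii): let $\sigma \in \mathrm{Aut}_{R/\pa}(A/\qa)$. Then $\varphi := \sigma \circ (A \twoheadrightarrow A/\qa)$ and $\psi := (A \twoheadrightarrow A/\qa)$ are two ring homomorphisms $A \to A/\qa$ (compose with an embedding $A/\qa \hookrightarrow k$ into a domain if one wants to match the hypothesis literally) that agree on $R = A^G$, since $\sigma$ fixes $R/\pa$. Lemma \ref{sos} produces $g \in G$ with $\varphi = \psi \circ g$; the kernel of $\varphi$ is $\qa$ and the kernel of $\psi \circ g$ is $g^{-1}(\qa)$, so $g \in G_{\qa/\pa}$, and by construction the induced automorphism of $A/\qa$ is $\sigma$. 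Hence $\varphi$ in the statement is surjective. For (iii): normality of $A/\qa$ over $R/\pa$ means that every $R/\pa$-embedding of $A/\qa$ into a fixed algebraic closure has image $A/\qa$; but any such embedding, precomposed with $A \twoheadrightarrow A/\qa$, agrees with the residue map on $A^G = R$, so Lemma \ref{sos} shows it differs from the residue map by some $g \in G$, forcing the image to be $A/\qa$ itself.

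The main obstacle I anticipate is purely bookkeeping: Lemma \ref{sos} is phrased for homomorphisms into a domain $k$ that need not be the residue field, so in each part one must choose $k$ appropriately (an algebraic closure of $\mathrm{Frac}(A/\qa)$ works uniformly) and then translate the resulting identity $\varphi = \psi \circ g$ back into a statement about primes or automorphisms by taking kernels and images. One should also check that $A/\qa$ is actually a field (so that "normal extension" makes sense): this follows because $A$ is integral over $R$, hence $A/\qa$ is integral over the field $R/\pa$ and is a domain, therefore a field. None of these steps is deep; the content is entirely in Lemma \ref{sos}, and the rest is careful unwinding of definitions.
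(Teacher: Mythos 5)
Your arguments for (i) and (ii) are essentially identical to the paper's: both reduce to Lemma \ref{sos} by comparing the residue map $A\to A/\qa\hookrightarrow k$ (into an algebraic closure of $Q(R/\pa)$) with a twist of it, and then extracting the desired statement about primes or automorphisms by comparing kernels. Your bookkeeping comments --- that $A/\qa$ is a field by integrality, that $A^G=R$, and that the codomain $k$ should be chosen uniformly --- are all correct and welcome, if slightly more detailed than the paper's terse sketch.

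For (iii) you take a genuinely different route. You use the characterization of normality as ``every $R/\pa$-embedding into a fixed algebraic closure has image $A/\qa$'' and derive it from one more application of Lemma \ref{sos}: such an embedding agrees with the residue map on $R$, hence differs from it by some $g\in G$, and since $g$ is an automorphism of $A$ the image is unchanged. The paper instead produces an explicit witness of normality: for $a\in A$, the polynomial $\prod_{g\in G}(X-g(a))$ has coefficients in $A^G=R$, and its reduction $\prod_{g\in G}(X-\overline{g(a)})\in (R/\pa)[X]$ is a polynomial with $\overline{a}$ as a root that splits completely in $A/\qa$. The paper's argument is more self-contained for this part (no appeal to \ref{sos}), more constructive, and works with minimal fuss about where the algebraic closure lives; your version is conceptually cleaner in that it keeps \ref{sos} as the single engine driving all three parts. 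Both are correct.

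One small point worth spelling out in your (iii): when you conclude ``the image is $A/\qa$ itself,'' you are implicitly using that the image of $\psi\circ g$ equals $\psi(g(A))=\psi(A)$ because $g$ is a ring automorphism of $A$; this is true but deserves a line.
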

\begin{proof}
For the first two parts we give a sketch since this is well-known. 

i. For two primes $\qa_1, \qa_2 \subset A$ above $\pa$ we consider two maps $A \to
A/\qa_i \to \overline{Q(R/\pa)}$ (algebraic closure of $Q(R/\pa)$) and apply Lemma \ref{sos}. 

ii. For the second part, we consider maps $A \to A/\qa \overset{f}{\to} A/\qa$ where $f \in \mathrm{Aut}_{R/\pa}(A/\qa)$. Apply
Lemma \ref{sos} to see that there is an element $g \in G$ that maps to $f$.

iii. Take $\overline{a} \in A/\qa$ where $a \in A$. Then $\prod_{g \in G}(X-\overline{g}(a))=\overline{\prod_{g \in G}(X-g(a))} \in R/\pa[X]$, which
follows from the fact that $A^G=R$ as $A$ is a Galois algebra.
\end{proof}

The concepts defined above behave  well under localization and completion. 

\begin{lemma} \label{gas}
 Suppose that $A$ is a Galois order over $R$ with group $G$. Let $\pa \subset R$ be prime and let $\qa \subset A$ be a prime lying over $\pa$. Then
the following statements hold.
\begin{enumerate}
 \item 
 $A_{\pa}=A \otimes_R R_{\pa}$ is a Galois order over $R_{\pa}$ with group $G$.
 \item
 $A \otimes_R \hat{R_{\pa}}$ is a Galois order over $\hat{R_{\pa}}$ with group $G$.  
 \item
 We have $G_{\qa/\pa}=G_{\qa A_{\pa}/\pa R_{\pa}}$ and $I_{\qa A_{\pa}/\pa R_{\pa}}=I_{\qa/\pa}$.
 \item
 Write $A \otimes_R \hat{R_{\pa}}=\prod_{\m \supseteq \pa A} \hat{A_{\m}}$. Then
$G_{\qa/\pa}=G_{\qa \hat{A_{\qa}}/\pa \hat{R_{\pa}}}$ and $I_{\qa/\pa}=I_{\qa \hat{A_{\qa}}/\pa \hat{R_{\pa}}}$. 
\item
$\hat{A_{\qa}}$ is a Galois order with group $G_{\qa/\pa}$ over $\hat{R_{\pa}}$. 
\end{enumerate}
\end{lemma}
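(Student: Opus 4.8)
\textbf{Proof proposal for Lemma \ref{gas}.}

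The plan is to treat the five items roughly in order, using the fact that localization and completion only change the base ring by a flat morphism and that $G$ acts $R$-linearly. First I would establish (i). The action of $G$ on $A$ extends uniquely to an action on $A_\pa = A\otimes_R R_\pa$ by $R_\pa$-algebra automorphisms, and tensoring the identity $Q(A)=A\otimes_R Q(R)$ with $Q(R_\pa)=Q(R)$ (Lemma \ref{loop}(ii)) shows $Q(A_\pa)=Q(A)$, with the same $G$-action; since being a finite Galois algebra over $Q(R)$ is a property of the $Q(R)$-algebra-with-$G$-action $Q(A)$, nothing changes, so $A_\pa$ is a Galois order over $R_\pa$ with group $G$. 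For (ii) I would argue the same way, except that completing changes the quotient field: $A\otimes_R\hat{R_\pa}$ has total quotient ring $Q(A)\otimes_{Q(R)}Q(\hat{R_\pa})$ by Theorem \ref{quotient} applied over $\hat{R_\pa}$ (or directly from Lemma \ref{cheat}), and by Remark \ref{5} a finite Galois algebra stays a finite Galois algebra under the base-field extension $Q(R)\to Q(\hat{R_\pa})$, with the same group $G$. Finiteness and torsion-freeness over $\hat{R_\pa}$ are immediate from flatness, so $A\otimes_R\hat{R_\pa}$ is a Galois order with group $G$.

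Next I would handle the group-theoretic statements (iii) and (iv). The decomposition group $G_{\qa/\pa}=\{g\in G: g(\qa)=\qa\}$ is visibly unchanged under localization at $\pa$, because the primes of $A_\pa$ above $\pa R_\pa$ correspond bijectively and $G$-equivariantly to the primes of $A$ above $\pa$ (the primes not meeting $R\setminus\pa$), and $A/\qa\cong A_\pa/\qa A_\pa$ by the natural map, so the map $\varphi$ defining the inertia group is literally the same; hence $I_{\qa A_\pa/\pa R_\pa}=I_{\qa/\pa}$. For (iv) I would use Lemma \ref{cheata}: writing $A\otimes_R\hat{R_\pa}=\prod_{\m\supseteq\pa A}\hat{A_\m}$, the factor $\hat{A_\qa}$ has a unique maximal ideal whose contraction to $A$ is $\qa$, and again $\hat{A_\qa}/(\text{max ideal})\cong A_\qa/\qa A_\qa\cong A/\qa$ canonically, so a $g\in G$ fixes the maximal ideal of $\hat{A_\qa}$ (equivalently, permutes the factors so as to preserve this one) exactly when $g(\qa)=\qa$, and it acts trivially on the residue field exactly when it lies in $I_{\qa/\pa}$. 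Here one must be slightly careful: an element $g\in G$ acts on the whole product $\prod_\m\hat{A_\m}$ by permuting the factors according to its action on $\{\m\supseteq\pa A\}$; the subgroup stabilizing the index $\qa$ is precisely $G_{\qa/\pa}$, and that is what acts on $\hat{A_\qa}$.

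Finally, for (v) I would combine everything: by (ii), $A\otimes_R\hat{R_\pa}$ is a Galois order over $\hat{R_\pa}$ with group $G$, and by Theorem \ref{ks} (or Lemma \ref{cheata}) it decomposes as $\prod_{\m\supseteq\pa A}\hat{A_\m}$ with $G$ permuting the factors transitively on each $G$-orbit — and by Lemma \ref{43}(i) the action on primes above $\pa$ is transitive, so there is a single orbit. A standard argument for Galois algebras of product type (the $_H\mathrm{Map}(G,L)$ description in the remark after the definition of finite Galois algebra) then identifies a single factor: $Q(\hat{A_\qa})$ with its residual $G_{\qa/\pa}$-action is a finite Galois algebra over $Q(\hat{R_\pa})$ with group $G_{\qa/\pa}$. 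Concretely, $Q(\hat{A_\qa})=e_\qa\bigl(Q(A)\otimes_{Q(R)}Q(\hat{R_\pa})\bigr)$ for the idempotent $e_\qa$ cutting out that factor, $G_{\qa/\pa}$ is exactly the stabilizer of $e_\qa$, $\#G_{\qa/\pa}=[G:\text{orbit size}]\cdot\#G/\#G = \dim_{Q(\hat{R_\pa})}Q(\hat{A_\qa})$ by counting dimensions in the product, and $Q(\hat{A_\qa})^{G_{\qa/\pa}}=Q(\hat{R_\pa})$ because $\bigl(\prod_\m\hat{A_\m}\bigr)^{G}=\hat{R_\pa}$ and the $G$-invariants of a transitive product are the diagonal copy of the stabilizer-invariants of one factor. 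Since $\hat{A_\qa}$ is already known to be a local order over $\hat{R_\pa}$ (Lemma \ref{cheata}), this shows it is a Galois order over $\hat{R_\pa}$ with group $G_{\qa/\pa}$.

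The main obstacle I anticipate is item (v): bookkeeping the identification of the stabilizer of one factor of the product with $G_{\qa/\pa}$ and verifying the three defining conditions of a finite Galois algebra (étaleness, the dimension count $\#G_{\qa/\pa}=\dim Q(\hat{A_\qa})$, and $Q(\hat{A_\qa})^{G_{\qa/\pa}}=Q(\hat{R_\pa})$) cleanly, rather than hand-waving via "the product decomposition". Everything else — (i)–(iv) — is routine flatness and the transparency of contraction of primes under localization/completion.
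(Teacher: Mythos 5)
Your proposal is correct and follows essentially the same route as the paper: items i--iv rest on the transparency of localization/completion for primes and residue fields (the paper, like you, invokes Remark \ref{5} for base change of Galois algebras in ii and the decomposition from Lemma \ref{cheata} in iv), and for item v the paper makes the same two checks you do -- a dimension count $\#G_{\qa/\pa}=\dim_{Q(\hat{R_\pa})}Q(\hat{A_\qa})$ from the transitivity in Lemma \ref{43}(i), and an invariants argument that reconstructs a $G$-fixed element $(g_\m(a))_\m$ of the full product from a $G_{\qa/\pa}$-fixed element $a$ of the factor, which is exactly your ``diagonal copy of the stabilizer-invariants'' observation.
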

\begin{proof}
i. This is obvious since we still have the same total quotient ring. 

ii. Notice that 
\begin{eqnarray*}
Q(A \otimes_R \hat{R})= A \otimes_R \hat{R} \otimes_{\hat{R}} Q(\hat{R}) =A \otimes_R Q(\hat{R}) = \left(A \otimes_R Q(R)\right) \otimes_{Q(R)}
Q(\hat{R}).
\end{eqnarray*}
Now use the fact that Galois algebras behave well with respect to base change (Remark \ref{5})

iii. One can easily check this.

iv. Use the proof of Lemma \ref{cheata} to see that the elements of $G_{\qa/\pa}$ correspond exactly to the elements which map $\hat{A_{\qa}}$ to
itself.
We have $\hat{A_{\qa}}/\qa \hat{A_{\qa}}=A/\qa$ and the natural map $G_{\qa \hat{A_{\qa}}/\pa \hat{R}} \to \mathrm{Aut}(\hat{A_{\qa}}/\qa
\hat{A_{\qa}})$ still has kernel $I_{\qa/\pa}$. 

v. First of all, we have seen that $\hat{A_{\qa}}$ is an order (Lemma \ref{cheata}). Using the decomposition $A \otimes_R \hat{R_{\pa}}=\prod_{\m}
\hat{A_{\m}}$ and the fact that $G$ acts transitively on the set of primes (see Lemma \ref{43}i), we see that $\#
G_{\qa/\pa}=\mathrm{dim}_{Q(\hat{R_{\pa})}}(Q(\hat{A_{\qa}}))$ as
required. Suppose that $a \in \hat{A_{\qa}}$ is fixed by all elements of $G_{\qa/\pa}$. For $\m \in \mathrm{Maxspec}(A)$ let $g_{\m} \in G$ be an
element such that $g_{\m}$ maps
$\hat{A_{\qa}}$ to $\hat{A_{\m}}$. We pick $g_{\qa}=\mathrm{id} \in G$. Then consider $(g_{\m}(a))_{\m} \in \prod_{\m} \hat{A_{\m}}=A \otimes_R
\hat{R_{\pa}}$. We claim that this element
is fixed by $G$. Indeed, if $g \in G$ maps $\m$ to $\m'$, then $g_{\m'}^{-1}gg_{\m} \in G_{\qa/\pa}$ and hence $g_{\m'}^{-1}gg_{\m}(a)=a$ and
$gg_{\m}(a)=g_{\m'}(a)$ as required.
As $A \otimes_R \hat{R_{\pa}}$ is a Galois order over $\hat{R_{\pa}}$, we conclude that $a \in \hat{R_{\pa}}$ as required. Hence the
statement follows.
\end{proof}

\begin{theorem} \label{zoe}
 Let $A$ be a Galois order over $R$ with group $G$. Let $\qa$ be a prime of $A$ and let $\pa=\qa \cap R$. Then $A$ is tame at $\pa$ if
and only if $\mathrm{char}(R/\pa) \nmid \# I_{\qa/\pa}$. 
\end{theorem}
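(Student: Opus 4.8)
The plan is to reduce to a local and complete situation and then identify the trace radical explicitly in terms of the inertia group. First I would use Lemma \ref{loop} and Lemma \ref{gas} to replace $R$ by $\hat{R_{\pa}}$ and $A$ by $\hat{A_{\qa}}$, so that we may assume $R$ is a complete discrete valuation ring, $A$ is a \emph{local} Galois order with group $G = G_{\qa/\pa}$, and $\qa$ is the unique maximal ideal of $A$; by Lemma \ref{cheat}(viii) and Lemma \ref{gas}(iv),(v) tameness and the inertia group are unchanged under these operations. In this situation $\mathrm{char}(R/\pa) \mid e_{\qa}$ exactly when $\mathrm{char}(R/\pa)$ divides the length of $A$ over $A_{\qa}=A$, i.e.\ $\mathrm{dim}_{R/\pa}(A/\qa A) / [A/\qa : R/\pa]$, and by Lemma \ref{43}(iii) the residue extension $A/\qa$ over $R/\pa$ is normal, hence tame in the sense of our Definition (its inseparability is controlled separately). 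The key point is then the classical identification $\# I_{\qa/\pa} = e_{\qa} \cdot (\text{inseparable degree of } (A/\qa)/(R/\pa))$, which I would prove using the surjection $\varphi: G \to \mathrm{Aut}_{R/\pa}(A/\qa)$ of Lemma \ref{43}(ii) together with $\# G = \mathrm{dim}_{Q(R)}(Q(A)) = e_{\qa} \cdot [A/\qa : R/\pa]$ and the fact that for a normal extension $\#\mathrm{Aut}_{R/\pa}(A/\qa)$ equals the separable degree.

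Once that numerical identity is in place, the theorem follows: $\mathrm{char}(R/\pa) \mid \# I_{\qa/\pa}$ if and only if $\mathrm{char}(R/\pa)$ divides $e_\qa$ or divides the inseparable degree of $(A/\qa)/(R/\pa)$, and this is precisely the condition that $\qa$ is a wild prime in the sense of our earlier Definition; by Proposition \ref{weed} (in the local-and-complete setting, where $A$ has the single prime $\qa$) this is equivalent to $A$ being wild at $\pa$. I would spell out the forward direction by noting that if $\qa$ is tame then $A/\qa A$ is a tame $R/\pa$-algebra with a single maximal ideal, so by Proposition \ref{weed} it is \'etale, forcing $e_\qa = 1$ and $(A/\qa)/(R/\pa)$ separable, hence $\# I_{\qa/\pa}$ coprime to the characteristic; conversely, if $e_\qa$ or the inseparable degree is divisible by the characteristic, then $A/\qa A$ is wild by Proposition \ref{weed}, so $A$ is wild at $\pa$.

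I expect the main obstacle to be the numerical identity $\# I_{\qa/\pa} = e_\qa \cdot (\text{insep.\ deg})$ in the possibly-inseparable residue case: for separable residue extensions this is the standard $|I| = e$ of algebraic number theory, but here $R/\pa$ need not be perfect, and one must be careful that $G = G_{\qa/\pa}$ acts on the local ring $\hat{A_\qa}$ with $\hat{A_\qa}^G = \hat{R_\pa}$, that $\ker\varphi = I_{\qa/\pa}$ has order equal to $\#G$ divided by the separable degree of $(A/\qa)/(R/\pa)$ (via surjectivity of $\varphi$ onto the full automorphism group of a normal extension), and that $\#G = e_\qa \cdot [A/\qa:R/\pa] = e_\qa \cdot (\text{sep.\ deg}) \cdot (\text{insep.\ deg})$. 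Combining these two displayed equalities for $\#G$ gives $\# I_{\qa/\pa} = e_\qa \cdot (\text{insep.\ deg})$, which is exactly what is needed. The rest of the argument is bookkeeping with the definitions of tame prime and of wildness, which I would keep brief.
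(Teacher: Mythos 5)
Your proposal hits the same core identity the paper uses: $\#I_{\qa/\pa} = e_{\qa/\pa}\cdot i$, with $i$ the inseparable degree of $(A/\qa)/(R/\pa)$, derived from $\#G=e_{\qa/\pa}\cdot[A/\qa:R/\pa]$ together with surjectivity of $\varphi$ and normality of the residue extension (Lemma \ref{43}), and then the observation that $\mathrm{char}(R/\pa)\mid e_{\qa/\pa}\cdot i$ is exactly the definition of $\qa$ being a wild prime. So the heart of the argument is the same. The difference is structural: the paper stays global and uses transitivity of the $G$-action (Lemma \ref{43}(i)) to rewrite $\#G=\dim_{R/\pa}(A/\pa A)$ as $\#G/\#G_{\qa/\pa}$ copies of $\dim_{R/\pa}((A/\pa A)_{\qa})$, whereas you first complete and localize at $\qa$ via Lemma \ref{gas} and then read $\#G_{\qa/\pa}$ off a single local factor. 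Both are legitimate; yours trades the bookkeeping of the sum over primes for the earlier machinery on completion.

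Two small things to fix. First, a citation gap in the reduction: Lemma \ref{cheat}(viii) gives tameness of $A$ at $\pa$ iff tameness of $A\otimes_R\hat R=\prod_{\m}\hat{A_{\m}}$, not directly of the single factor $\hat{A_{\qa}}$; to pass to the local factor you still need Proposition \ref{weed} (product tame iff all factors tame) together with transitivity (Lemma \ref{43}(i), so all factors are isomorphic). Second, an actual error in the wrap-up: "by Proposition \ref{weed} it is \'etale, forcing $e_{\qa}=1$" is false. Tameness does not force $e_{\qa}=1$ or \'etaleness; $k[x]/(x^2)$ over $k$ with $\mathrm{char}\,k=0$ is tame, local, non-reduced. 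What tameness of the prime $\qa$ gives is $\mathrm{char}(R/\pa)\nmid e_{\qa/\pa}$ and $(A/\qa)/(R/\pa)$ separable (i.e.\ $i=1$), which is exactly what you need to conclude $\mathrm{char}(R/\pa)\nmid e_{\qa/\pa}\cdot i=\#I_{\qa/\pa}$. Replacing that sentence with the definition of a tame prime fixes the issue without touching the rest of the argument.
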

\begin{proof}
As $G$ acts transitively on the primes lying above $R$ (Lemma \ref{43}), $A$ is tame at $\pa$ iff $A/\qa$ is a tame $R/\pa$-algebra. By Lemma
\ref{43} it follows that the map $G_{\qa/\pa}/I_{\qa/\pa} \to \mathrm{Aut}_{R/\pa}(A/\qa)$ is surjective. This lemma also gives us that the extension
$A/\qa$ is normal over $R/\pa$ and hence that $\#G_{\qa/\pa}/I_{\qa/\pa}=\#
\mathrm{Aut}_{R/\pa}(A/\qa)=[A/\qa:R/\pa]_s$, the separability degree of the extension. Let $i$ be the inseparability degree of this extension.
Notice that we have $\mathrm{dim}_{Q(R)}(Q(A))=\mathrm{dim}_{R/\pa}(A/\pa A)$. Indeed, both are equal to $\mathrm{rank}_{R_{\pa}}(A \otimes_R
R_{\pa})$. Then we have
\begin{eqnarray*}
 \#G &=& \mathrm{dim}_{Q(R)}(Q(A))=\mathrm{dim}_{R/\pa}(A/\pa A) \\
&=& \sum_{\qa' \supset \pa} \mathrm{dim}_{R/\pa}\left((A/\pa A)_{\qa'}\right) \\
&=& \#G/G_{\qa/\pa} \cdot \mathrm{dim}_{R/\pa}\left((A/\pa A)_{\qa}\right) \hspace{4.4cm} (\mathrm{Lemma\ } \ref{43}) \\
&=& \#G/G_{\qa/\pa} \cdot \mathrm{dim}_{R/\pa}(A/\qa) \cdot \mathrm{length}_{(A/\pa A)_{\qa}}((A/\pa A)_{\qa}) \\
&=& \#G/G_{\qa/\pa} \cdot i \cdot [A/\pa:R/\pa]_s \cdot e_{\qa/\pa} \\
&=& \#G/G_{\qa/\pa} \cdot i \cdot \#G_{\qa/\pa}/I_{\qa/\pa} \cdot e_{\qa/\pa}.
\end{eqnarray*}
Hence $\#I_{\qa/\pa}=i \cdot e_{\qa/\pa}$. As $i$ is always a power of $\mathrm{char}(R/\pa)$, the definition of tameness of $A$ at $\qa$ is
equivalent to saying that
$\mathrm{char}(R/\pa) \nmid i \cdot e_{\qa/\pa}$, that is, $\mathrm{char}(R/\pa) \nmid \#I_{\qa/\pa}$. 
\end{proof}

\section{Quasi-anisotropy and the integral closure}

First we will recall the definition of quasi-anistropy from \cite{KO2}. Let $M$ be a finitely generated module over an artinian local
principal ideal ring $(R',\pa')$. Let $N$ be an $R'$-module such that $N \cong_{R'} R'$ and let $\bi: M \times M \to N$ be a non-degenerate symmetric
$R'$-bilinear form. Then $\bi$ is called quasi-anisotropic if for all $R'$-submodules $L \subseteq \mathrm{lr}(M)$ we have
$\mathrm{lr}(L^{\perp}/L)=\mathrm{lr}(M)/L$. In this case we have $\mathrm{rr}(M)=\mathrm{lr}(M)$ (see \cite{KO2}, Lemma 10.8). In \cite{KO2} some
other
equivalent definitions of quasi-anisotropy are given which are more practical. The following lemma gives the connection between quasi-anisotropy
and anisotropy (see \cite{KO2}, Theorem 9.4).

\begin{lemma} \label{bra}
 Let $\bi: M \times M \to N$ be a non-degenerate symmetric $R'$-bilinear form. Then $\bi$ is quasi-anisotropic if and only if the induced form
$\bi':M/M[\pa'] \times M/M[\pa'] \to N/N[\pa']$ is anisotropic. 
\end{lemma}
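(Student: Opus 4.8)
The plan is to run a single ``dictionary'' between the bilinear geometry of $(M,\bi)$ and that of its socle quotient $\overline{M}:=M/M[\pa']$ carrying the induced form $\bi'$, and to read the defining condition of quasi-anisotropy for $\bi$ off that of anisotropy for $\bi'$. First one must check that $\bi'$ is a legitimate object of the theory. Since $\pa'\bi(x,y)=\bi(\pa'x,y)$, the form $\bi$ sends $M[\pa']\times M$ into $N[\pa']$, so $\bi'$ is well defined with values in $N/N[\pa']$. Non-degeneracy of $\bi$ gives the identity $M[\pa']=(\pa'M)^{\perp}$ (as $\pa'y=0\iff\bi(\pa'y,M)=0\iff\bi(y,\pa'M)=0$), and hence if $\bi(x,M)\subseteq N[\pa']$ then $\bi(\pa'x,M)=0$, so $\pa'x=0$; thus $\bi'$ is non-degenerate. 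Finally $N/N[\pa']\cong R'/\pa'^{\,n-1}$ is free of rank one over the artinian local principal ideal ring $R'/\pa'^{\,n-1}$ of length $n-1$ (where $n=\mathrm{length}(R')$), so $\mathrm{lr}$, $\mathrm{rr}$, anisotropy and quasi-anisotropy all make sense for $\bi'$.

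Next I would establish the dictionary. For a submodule $\overline{L}\subseteq\overline{M}$ with full preimage $L\subseteq M$ (so $M[\pa']\subseteq L$), a short computation identifies the preimage of the orthogonal complement $\overline{L}^{\,\perp}$ (taken inside $\overline{M}$) with $(L^{\perp}:\pa')=\{y\in M:\pa'y\in L^{\perp}\}$, whence $\pa'(L^{\perp}:\pa')=L^{\perp}\cap\pa'M$. Reading this off, $\overline{L}$ is admissible for $\bi'$, i.e. $\overline{\pa'}\,\overline{L}^{\,\perp}\subseteq\overline{L}\subseteq\overline{L}^{\,\perp}$, precisely when $\pa'L\subseteq L^{\perp}$ and $L^{\perp}\cap\pa'M\subseteq L$ --- a pair of conditions related to, but not equivalent to, ``$L$ admissible for $\bi$''. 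Using the modular law one also finds that $\mathrm{lr}(\overline{M})$ is the image in $\overline{M}$ of $M[\pa']+\sum_{k}\bigl(\pa'^{\,k}M\cap M[\pa'^{\,k+1}]\bigr)$, to be compared with $\mathrm{lr}(M)=\sum_{k}\bigl(\pa'^{\,k}M\cap M[\pa'^{\,k}]\bigr)$. One records two further elementary facts: for any finitely generated $R'$-module $P$ one has $\mathrm{lr}(P)=0$ iff $\pa'P=0$ (check on a cyclic decomposition), and for quasi-anisotropic $\bi$ one has $\mathrm{rr}(M)=\mathrm{lr}(M)$, together with the standing facts from \cite{KO2} that $\mathrm{lr}(M)$ is always admissible and satisfies $\mathrm{lr}(M)\subseteq\mathrm{lr}(M)^{\perp}$. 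One should keep in mind that these operations genuinely move things: $\mathrm{lr}(M)$ need not contain $M[\pa']$ at all, and the image of $\mathrm{lr}(M)$ in $\overline{M}$ is in general different from $\mathrm{lr}(\overline{M})$, so the dictionary is not a naive transport of structure.

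With the dictionary in hand, for ``$\bi$ quasi-anisotropic $\Rightarrow$ $\bi'$ anisotropic'' one takes an arbitrary $\overline{L}\subseteq\overline{M}$ admissible for $\bi'$, lifts it to $L\supseteq M[\pa']$, and applies the defining identity $\mathrm{lr}(L_0^{\perp}/L_0)=\mathrm{lr}(M)/L_0$ to a suitable $L_0\subseteq\mathrm{lr}(M)$ manufactured from $L$ (for instance $L_0=L\cap\mathrm{lr}(M)$, after which one invokes $\mathrm{lr}(\,\cdot\,)=0\iff\pa'(\,\cdot\,)=0$); feeding the translated admissibility condition through this identity forces $\overline{L}=\mathrm{lr}(\overline{M})$, i.e. uniqueness of the $\bi'$-admissible submodule, which together with ``$\mathrm{lr}(\overline{M})$ admissible'' is anisotropy. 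For the converse, fix $L\subseteq\mathrm{lr}(M)$; then $L\subseteq\mathrm{lr}(M)\subseteq\mathrm{lr}(M)^{\perp}\subseteq L^{\perp}$, so $P:=L^{\perp}/L$ inherits a non-degenerate form, the inclusion $\mathrm{lr}(M)/L\subseteq\mathrm{lr}(P)$ is formal, and for the reverse inclusion one transports the problem to the socle quotient $P/P[\pa']$, which the dictionary re-expresses in terms of $\overline{M}$ and quotients of its submodules; anisotropy of $\bi'$ pins the corresponding lower root to its minimal value, and an induction on $\mathrm{length}(M)$ (peeling off a minimal nonzero submodule of $\mathrm{lr}(M)$, which replaces $M$ by a shorter $L_0^{\perp}/L_0$) closes the loop. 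The hard part is precisely this last step: because $\mathrm{lr}$ and the lattice of admissible submodules really do change under $M\rightsquigarrow M/M[\pa']$ and under $M\rightsquigarrow L^{\perp}/L$, one cannot black-box the reductions, and one must either reprove the structural lemmas of \cite{KO2} describing how $\mathrm{lr}$, $\mathrm{rr}$ and admissibility interact with these two operations, or package them into the induction so that the discrepancies never actually obstruct the argument.
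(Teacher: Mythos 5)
The paper itself offers no proof of Lemma~\ref{bra}: it is quoted from \cite{KO2}, Theorem~9.4, so there is no internal argument to compare yours against. Judged on its own, your preliminary ``dictionary'' is set up correctly and carefully: $\bi(M[\pa']\times M)\subseteq N[\pa']$ so $\bi'$ is well defined; the identity $M[\pa']=(\pa'M)^{\perp}$ gives non-degeneracy of $\bi'$; $N/N[\pa']\cong R'/\pa'^{\,n-1}$; the preimage of $\overline{L}^{\perp}$ is $(L^{\perp}:\pa')$; $\pa'(L^{\perp}:\pa')=L^{\perp}\cap\pa'M$; the translated admissibility conditions $\pa'L\subseteq L^{\perp}$ and $L^{\perp}\cap\pa'M\subseteq L$; the description of $\mathrm{lr}(M/M[\pa'])$ via the modular law; and the criterion $\mathrm{lr}(P)=0\Leftrightarrow\pa'P=0$ all check out.

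The genuine gap is that, having assembled this machinery, you never actually prove either implication. For the forward direction you write that one applies the quasi-anisotropy identity to $L_0=L\cap\mathrm{lr}(M)$ and that ``feeding the translated admissibility condition through this identity forces $\overline{L}=\mathrm{lr}(\overline{M})$,'' but no argument connects the hypotheses $\pa'L\subseteq L^{\perp}$, $L^{\perp}\cap\pa'M\subseteq L$ to the conclusion; in particular the choice $L_0=L\cap\mathrm{lr}(M)$ is not shown to be useful, and admissibility of $\mathrm{lr}(\overline{M})$ for $\bi'$ is asserted rather than derived from quasi-anisotropy of $\bi$. For the converse you explicitly flag the reverse inclusion $\mathrm{lr}(L^{\perp}/L)\subseteq\mathrm{lr}(M)/L$ as ``the hard part'' and defer it to an unspecified induction, noting yourself that $\mathrm{lr}$, $\mathrm{rr}$ and admissibility all change under $M\rightsquigarrow M/M[\pa']$ and $M\rightsquigarrow L^{\perp}/L$ and that the structural lemmas of \cite{KO2} would need to be reproved or repackaged. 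That is precisely the content of the cited Theorem~9.4, so what you have is a correct scaffold but not a proof: the bridge between the family of submodules $L\subseteq\mathrm{lr}(M)$ appearing in quasi-anisotropy and the family of $\bi'$-admissible submodules of $\overline{M}$ appearing in anisotropy is the missing idea, and it has to be built explicitly before the equivalence can be read off.
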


We also have the following lemma (Lemma 9.5 from \cite{KO2}). 

\begin{lemma} \label{brak}
  Let $\bi: M \times M \to N$ be a non-degenerate symmetric $R'$-bilinear form which is quasi-anisotropic. Let $L \subseteq \mathrm{lr}(M)$. Then $L
\subseteq L^{\perp}$ and
the induced form $\bi': L^{\perp}/L \times L^{\perp}/L \to N$ is also quasi-anisotropic. 
\end{lemma}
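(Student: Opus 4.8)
The plan is to split the statement into three parts and to invoke quasi-anisotropy of $\bi$ only in the third; the first two are general facts about non-degenerate forms over $R'$.

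\emph{Part one: $L\subseteq L^{\perp}$.} I would first prove the (hypothesis-free) fact that $\mathrm{lr}(M)$ is totally isotropic, i.e.\ $\mathrm{lr}(M)\subseteq\mathrm{lr}(M)^{\perp}$. Non-degeneracy makes $X\mapsto X^{\perp}$ an inclusion-reversing involution on submodules which turns sums into intersections and intersections into sums (part of the duality formalism of \cite{KO2}, resting on $N\cong_{R'}R'$ being self-injective); combined with $(\pa'^{k}M)^{\perp}=M[\pa'^{k}]$ and $(M[\pa'^{k}])^{\perp}=\pa'^{k}M$ this gives $\mathrm{lr}(M)^{\perp}=\bigcap_{k}\bigl(\pa'^{k}M+M[\pa'^{k}]\bigr)$. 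A generator $x\in\pa'^{j}M\cap M[\pa'^{j}]$ of $\mathrm{lr}(M)$ lies in $\pa'^{k}M$ when $j\ge k$ and in $M[\pa'^{k}]$ when $j\le k$, hence in every summand $\pa'^{k}M+M[\pa'^{k}]$; so $\mathrm{lr}(M)\subseteq\mathrm{lr}(M)^{\perp}$. Since $L\subseteq\mathrm{lr}(M)$ we conclude $L\subseteq\mathrm{lr}(M)\subseteq\mathrm{lr}(M)^{\perp}\subseteq L^{\perp}$, so the form vanishes on $L$.

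\emph{Part two: the induced form.} Because $\langle\,\cdot\,,\,\cdot\,\rangle$ vanishes on $L$ it descends to $\bi'\colon L^{\perp}/L\times L^{\perp}/L\to N$, and I would check non-degeneracy in the standard way: injectivity of $L^{\perp}/L\to\Hom(L^{\perp}/L,N)$ from $(L^{\perp})^{\perp}=L$, and surjectivity from the length identity $\mathrm{length}(L^{\perp}/L)=\mathrm{length}(\Hom(L^{\perp}/L,N))$, valid since $\Hom(-,N)$ preserves lengths over $R'$. This is routine and is also recorded in \cite{KO2}.

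\emph{Part three: quasi-anisotropy of $\bi'$.} Write $M'=L^{\perp}/L$ and let $\perp'$ denote orthogonality in $M'$. Applying the defining property of quasi-anisotropy of $\bi$ to our $L$ already gives $\mathrm{lr}(M')=\mathrm{lr}(M)/L$. Hence an arbitrary submodule $L''\subseteq\mathrm{lr}(M')$ is of the form $K/L$ with $L\subseteq K\subseteq\mathrm{lr}(M)$; from $L\subseteq K\subseteq L^{\perp}$ one gets $L\subseteq K^{\perp}\subseteq L^{\perp}$, and the submodule correspondence along $L^{\perp}\twoheadrightarrow M'$ gives $(L'')^{\perp'}=K^{\perp}/L$, so that $(L'')^{\perp'}/L''\cong K^{\perp}/K$ compatibly with the induced forms. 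Now apply quasi-anisotropy of $\bi$ a second time, to $K$ (permissible since $K\subseteq\mathrm{lr}(M)$, hence $K\subseteq K^{\perp}$): $\mathrm{lr}(K^{\perp}/K)=\mathrm{lr}(M)/K$. Transported through these isomorphisms this reads $\mathrm{lr}\bigl((L'')^{\perp'}/L''\bigr)=(\mathrm{lr}(M)/L)\big/(K/L)=\mathrm{lr}(M')/L''$, which — as $L''$ was arbitrary — is precisely quasi-anisotropy of $\bi'$.

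The main obstacle is the bookkeeping in part three: matching submodules of $M'$ with the interval $[L,L^{\perp}]$ of submodules of $M$, proving $(L'')^{\perp'}=K^{\perp}/L$, and checking that the form obtained on $K^{\perp}/K$ by descending $\bi$ in two stages (first to $L^{\perp}/L$, giving $\bi'$, then restricting and descending further along $(K/L)^{\perp'}/(K/L)$) agrees with the form obtained by descending $\bi$ directly to $K^{\perp}/K$. None of this is deep, but it all leans on $X^{\perp\perp}=X$ for every submodule $X$ and on length-preservation of $\Hom(-,N)$; I would isolate those two structural facts (which is where self-injectivity of $R'$ enters) at the outset and then use them freely.
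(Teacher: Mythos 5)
The paper does not prove this lemma; it cites Lemma~9.5 of \cite{KO2} and moves on, so there is no in-paper argument to compare against. Judged on its own, your proof is correct, and the bookkeeping you flag as the main obstacle does in fact go through. Part one correctly reduces $L\subseteq L^{\perp}$ to the total isotropy of $\mathrm{lr}(M)$, via the duality dictionary $(\pa'^{k}M)^{\perp}=M[\pa'^{k}]$, $(M[\pa'^{k}])^{\perp}=\pa'^{k}M$, and $X^{\perp\perp}=X$ (which rest on $N\cong_{R'}R'$ being an injective cogenerator, so that $\IHom(-,N)$ is exact and length-preserving). Part two is standard. Part three is the real content: the key move is that quasi-anisotropy of $\bi$ is invoked \emph{twice} — once at $L$ to identify $\mathrm{lr}(M')=\mathrm{lr}(M)/L$ (hence to parametrize submodules $L''\subseteq\mathrm{lr}(M')$ by $K$ with $L\subseteq K\subseteq\mathrm{lr}(M)$), and a second time at each such $K$ (legal because $K\subseteq\mathrm{lr}(M)$ forces $K\subseteq K^{\perp}$ by the part-one computation). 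The identifications $(L'')^{\perp'}=K^{\perp}/L$ and $(L'')^{\perp'}/L''\cong K^{\perp}/K$ compatibly with the descended forms are both verified by the obvious chase, since on representatives every version of the form is just $(x,y)\mapsto\bi(x,y)$ for $x,y\in K^{\perp}$; and the two-stage quotient agrees with the one-stage quotient by the third isomorphism theorem. Combining yields $\mathrm{lr}\bigl((L'')^{\perp'}/L''\bigr)=\mathrm{lr}(M)/K=\mathrm{lr}(M')/L''$ exactly as required. One thing worth making explicit in a write-up: you need $\mathrm{lr}$ to be natural under $R'$-module isomorphisms to transport $\mathrm{lr}(K^{\perp}/K)$ across the identification with $(L'')^{\perp'}/L''$; this is immediate from the intrinsic definition $\mathrm{lr}(M)=\sum_k(\pa'^{k}M\cap M[\pa'^{k}])$, but it is the hinge of the transport step and deserves a sentence.
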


For any ring $B$ we define the Jacobson radical $\mathfrak{r}_B$ to be the intersection of all maximal ideals of $B$. 

\begin{lemma} \label{cos}
Let $R$ be a complete discrete valuation ring and let $A$ be an order over $R$. Then $\mathfrak{r}_A:\mathfrak{r}_A$ is an order and $A$ is integrally
closed iff $\mathfrak{r}_A:\mathfrak{r}_A=A$.
\end{lemma}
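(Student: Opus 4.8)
The plan is to reduce to the local case handled by Theorem \ref{wuffer} via the product decomposition of Theorem \ref{ks}, and then use the fact that the Jacobson radical of a finite product is the product of the individual maximal ideals. First I would invoke Theorem \ref{ks} to write $A \cong \prod_{\m \in \mathrm{Maxspec}(A)} A_\m$, where each $A_\m$ is a complete local order over $R$ with maximal ideal $\m A_\m$. Under this identification $Q(A) = \prod_\m Q(A_\m)$ and $\overline{A} = \prod_\m \overline{A_\m}$, since integral closure commutes with finite products (this is implicit in the discussion following Theorem \ref{ks}, and also follows from Lemma \ref{loop} iv together with Lemma \ref{mars}). Moreover $\mathfrak{r}_A = \prod_\m \m A_\m$, because a maximal ideal of a finite product is the preimage of a maximal ideal in one factor, so intersecting them all gives the product of the $\m A_\m$.

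Next I would show $\mathfrak{r}_A : \mathfrak{r}_A = \prod_\m \bigl( \m A_\m : \m A_\m \bigr)$, where the colon on the right is taken inside $Q(A_\m)$. This is a componentwise computation: an element $x = (x_\m)_\m \in Q(A) = \prod_\m Q(A_\m)$ satisfies $x \,\mathfrak{r}_A \subseteq \mathfrak{r}_A$ if and only if $x_\m \cdot \m A_\m \subseteq \m A_\m$ for every $\m$ (using that $\mathfrak{r}_A$ and $\mathfrak{r}_A:\mathfrak{r}_A$ respect the product decomposition). By Lemma \ref{sop} applied to each local order $A_\m$, the ring $\m A_\m : \m A_\m$ is an order over $R$ with $A_\m \subseteq \m A_\m : \m A_\m \subseteq \overline{A_\m}$. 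A finite product of orders over $R$ with total quotient ring $\prod_\m Q(A_\m) = Q(A)$ is again an order over $R$ (it is finitely generated, torsion-free, and its base change to $Q(R)$ is $Q(A)$, which is finite \'etale). Hence $\mathfrak{r}_A : \mathfrak{r}_A$ is an order, and it sits between $A$ and $\overline{A}$.

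Finally, for the equivalence: $\mathfrak{r}_A : \mathfrak{r}_A = A$ holds if and only if $\m A_\m : \m A_\m = A_\m$ for every maximal ideal $\m$, which by Theorem \ref{wuffer} (equivalence of (i) and (ii), applied to the local order $A_\m$) holds if and only if $A_\m = \overline{A_\m}$ for every $\m$, which is equivalent to $A = \prod_\m A_\m = \prod_\m \overline{A_\m} = \overline{A}$. The main obstacle, such as it is, is the bookkeeping in verifying that all four constructions involved --- the Jacobson radical, the colon ring, the integral closure, and the order property --- are compatible with the product decomposition of Theorem \ref{ks}; none of these is deep, but the argument should state the product decomposition of $\mathfrak{r}_A:\mathfrak{r}_A$ carefully since that is where the local criterion of Theorem \ref{wuffer} gets applied factor by factor.
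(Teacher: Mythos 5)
Your proof is correct and takes essentially the same route as the paper: decompose $A$ as a finite product of local orders via Theorem \ref{ks}, compute the Jacobson radical and the colon ring componentwise, invoke Lemma \ref{sop} on each factor, and conclude with the local criterion of Theorem \ref{wuffer}. You spell out the bookkeeping a bit more explicitly than the paper, but the argument is the same.
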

\begin{proof}
 Write $A= \prod_{\m} A_{\m}$ as in Theorem \ref{ks}, where the $A_{\m}$ are local with maximal ideal $\m A_{\m}$. Now from Lemma \ref{sop} we know
that
$\mathfrak{r}_A:\mathfrak{r}_A=\prod_{\m} \left(\m A_{\m}:\m A_{\m} \right)$ is an order. We know that $\overline{A}=\prod_{\m} \overline{A_{\m}}$,
and hence $A$
is integrally closed iff all $A_{\m}$ are integrally closed. Here one uses that total quotient ring is just the product of the total
corresonding total quotient rings. By Theorem \ref{wuffer} we know that $A_{\m}$ is integrally closed iff $\m A_{\m}: \m A_{\m}=A_{\m}$. Hence we see
that $A$ is integrally closed iff $A=\mathfrak{r}_A:\mathfrak{r}_A$. 
\end{proof}

We have the following theorem. The hard part is to prove that a certain module is in fact already a ring. 

\begin{theorem} \label{nano}
Let $A$ be an order over a discrete valuation ring $(R,\pa)$. Let $I \subseteq \mathrm{Ann}_R(A^{\dagger}/A)$ be a nonzero ideal of $R$. Let $A_0=A
\otimes_R \hat{R}$
and
$A_{i+1}=\mathfrak{r}_{A_i}:\mathfrak{r}_{A_i}$ for $i \geq 0$. Suppose that the $A_i$ are tame at $\pa \hat{R}$. Let $B= A^{\dagger}/A$ and let
$\langle\,\,
,\,\rangle: B \times B \to I^{-1}/R$ be the induced form (Lemma \ref{domino}). Suppose that $\langle\,\, ,\,\rangle$ is
quasi-anisotropic. Then $\overline{A}/A=\mathrm{lr}(B)$. 
\end{theorem}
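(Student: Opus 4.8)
The plan is to pass to the completion, form the increasing chain $A_0\subseteq A_1\subseteq\cdots$, note that it reaches $\overline{A}$, and then trap $\mathrm{lr}(B)$ between $\overline{A}/A$ and itself. First I would make two harmless reductions: by Lemma \ref{cheat} we may replace $A$ by $A\otimes_R\hat{R}=A_0$, so that $R$ is complete and $A=A_0$, and this preserves $\overline{A}/A$, the pair $(B,\bi)$, and all the tameness hypotheses; and we may assume $I=\Ann_R(B)$, since enlarging $I$ inside $\Ann_R(B)$ changes none of $L\mapsto L^{\perp}$, $\mathrm{lr}$, $\mathrm{rr}$ or the quasi-anisotropy of $\bi$, all of which are expressed through the condition ``$\mathrm{Tr}_{Q(A)/Q(R)}(xy)\in R$''. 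By Lemma \ref{cos} each $A_{i+1}=\mathfrak{r}_{A_i}:\mathfrak{r}_{A_i}$ is an order in $Q(A)$, hence $A_{i+1}\subseteq\overline{A}$ (Lemma \ref{longi} v), and $A_{i+1}=A_i$ exactly when $A_i$ is integrally closed. Since $\overline{A}/A$ has finite length (Lemma \ref{longi} iv), the chain stabilizes at some $A_n$, which is then integrally closed, so $A_n=\overline{A_n}=\overline{A}$ (Lemma \ref{longi} i). In particular $\overline{A}$ occurs in the chain, so it is tame at $\pa$; then Lemma \ref{domina} applied to $C=\overline{A}/A$ gives $\pa(R/I)C^{\perp}\subseteq C\subseteq C^{\perp}$, whence $\mathrm{rr}(B)\subseteq C$. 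As $\bi$ is quasi-anisotropic, $\mathrm{rr}(B)=\mathrm{lr}(B)$, so $\mathrm{lr}(B)\subseteq\overline{A}/A$.

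For the reverse containment I would prove by induction on $i$ that $C_i:=A_i/A\subseteq\mathrm{lr}(B)$; the case $i=n$ then gives $\overline{A}/A\subseteq\mathrm{lr}(B)$ and finishes the proof. The case $C_0=0$ is trivial. For the step, one first checks that $C_i^{\perp}=A_i^{\dagger}/A$: for $x+A\in B$, membership in $C_i^{\perp}$ means $\mathrm{Tr}_{Q(A)/Q(R)}(xA_i)\subseteq R$, i.e.\ $x\in A_i^{\dagger}$, and $A_i^{\dagger}\subseteq A^{\dagger}$ because $A\subseteq A_i$. Hence $C_i^{\perp}/C_i\cong A_i^{\dagger}/A_i=:B_i$ as $R$-modules. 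Applying the definition of quasi-anisotropy of $\bi$ to the submodule $L=C_i\subseteq\mathrm{lr}(B)$ gives $\mathrm{lr}(C_i^{\perp}/C_i)=\mathrm{lr}(B)/C_i$, which transports along the isomorphism to $\mathrm{lr}(B_i)=\mathrm{lr}(B)/C_i$ inside $A_i^{\dagger}/A_i$ (this makes sense because $\mathrm{lr}(B)\subseteq\overline{A}/A\subseteq A_i^{\dagger}/A=C_i^{\perp}$ by the previous paragraph). So it suffices to show $A_{i+1}/A_i\subseteq\mathrm{lr}(B_i)$: pulling this back along $A_i^{\dagger}\twoheadrightarrow A_i^{\dagger}/A_i$, and using $C_i\subseteq\mathrm{lr}(B)$ together with $A_{i+1}\subseteq\overline{A}\subseteq A_i^{\dagger}$ (Lemma \ref{longi} iii), gives $A_{i+1}/A\subseteq\mathrm{lr}(B)$.

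It remains to establish $A_{i+1}/A_i\subseteq\mathrm{lr}(B_i)$, which is where the ring structure is used. Write $A_i=\prod_{\m}(A_i)_{\m}$ (Theorem \ref{ks}); then $\mathfrak{r}_{A_i}:\mathfrak{r}_{A_i}=\prod_{\m}\bigl(\m(A_i)_{\m}:\m(A_i)_{\m}\bigr)$ and $B_i=\prod_{\m}(B_i)_{\m}$ with $(B_i)_{\m}=(A_i)_{\m}^{\dagger}/(A_i)_{\m}$, all compatibly. Each $(A_i)_{\m}$ is a local order, tame at $\pa$ because $A_i$ is (Proposition \ref{weed}), so Corollary \ref{nutto} gives $\bigl(\m(A_i)_{\m}:\m(A_i)_{\m}\bigr)/(A_i)_{\m}=\bigl(\pa(B_i)_{\m}\bigr)[\m(A_i)_{\m}]$. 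Since $\pi\in\m(A_i)_{\m}$, this submodule lies in $\bigl(\pa(B_i)_{\m}\bigr)[\pi]=\pi(B_i)_{\m}\cap(B_i)_{\m}[\pi]$; summing over $\m$ we obtain $A_{i+1}/A_i\subseteq\pi B_i\cap B_i[\pi]$, which is the $k=1$ summand in the definition of $\mathrm{lr}(B_i)$. This closes the induction, and hence proves the theorem.

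I expect the inductive step to be the main obstacle: one must identify $C_i^{\perp}/C_i$ with the trace module $A_i^{\dagger}/A_i$ precisely enough that quasi-anisotropy of $\bi$ on $B$ — which speaks only about submodules of $\mathrm{lr}(B)$ sitting inside $B$ — descends to control the lower root of the orthogonal quotients, and then one must feed the purely ring-theoretic description of $\mathfrak{r}_{A_i}:\mathfrak{r}_{A_i}$ from Corollary \ref{nutto} into that framework. By comparison, the containment $\mathrm{lr}(B)\subseteq\overline{A}/A$ is essentially immediate once $\overline{A}$ is known to lie in the chain and hence be tame, and the rest is bookkeeping with completions, finite products over maximal ideals, and length.
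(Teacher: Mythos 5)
Your proof is correct and follows essentially the same route as the paper's: both reduce to the complete local case, exploit the chain $A=A_0\subseteq A_1\subseteq\cdots\subseteq A_n=\overline{A}$, use Corollary \ref{nutto} (localized via Theorem \ref{ks}) to show each jump $A_{i+1}/A_i$ sits in the first summand of a lower root, and invoke quasi-anisotropy to control the orthogonal quotients. The only cosmetic difference is the framing of the induction --- the paper inducts on $\mathrm{length}_R(\mathrm{lr}(B))$ and re-applies the theorem to $A'=\mathfrak{r}_A:\mathfrak{r}_A$ after checking its hypotheses via Lemma \ref{brak}, whereas you induct directly on the chain index $i$ and work entirely with the original form on $B$, which avoids re-verifying hypotheses at each step.
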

\begin{proof}
Assume that $R$ is complete (Lemma \ref{cheat}). We will give a proof by induction
on $s=\mathrm{length}_R(\mathrm{lr}(B))$. If $s=0$ then $\pa B=0$ and by Theorem \ref{thm1} we find $\overline{A}/A=A/A=\mathrm{lr}(B)$. Now continue
by induction and assume that $s \geq 1$. We know that $\mathrm{lr}(B)=\mathrm{rr}(B) \subseteq \overline{A}/A \subseteq A^{\dagger}/A$ (Theorem
\ref{main5} and the quasi-anisotropy). As $s \geq 1$, this implies that $A \subsetneq \overline{A}$. Now consider the order
$A'=\mathfrak{r}_A:\mathfrak{r}_A$. This order $A'$ satisfies $A \subsetneq A' \subseteq \overline{A} \subseteq A^{\dagger}$ (Lemma \ref{cos}). By
using Corollary \ref{nutto} and Theorem \ref{ks} we have $A'/ A \subseteq \pa B \cap B[\pa] \subseteq \mathrm{lr}(B)$ (by definition of the lower
root). Now use Lemma
\ref{brak} to see that $\left( A'^{\dagger}/A \right)/\left( A'/A \right) \cong A'^{\dagger}/A'$ is still quasi-anisotropic. We have by the definition of
quasi-anistropy that
$\mathrm{lr}(\left( A'^{\dagger}/A \right)/\left( A'/A\right))=\mathrm{lr}(B)/\left( A'/A \right)$,  which has smaller length than
$\mathrm{lr}(B)$ (as $A \subsetneq A'$). By our induction hypothesis we have 
\begin{eqnarray*}
\mathrm{lr}(B)/\left( A'/A \right) &=& \mathrm{lr}(\left( A'^{\dagger}/A \right)/\left( A'/A \right)) \cong \mathrm{lr}(A'^{\dagger} /A' ) \\
&=& \overline{A'}/A' = \overline{A}/A' \cong \left( \overline{A}/A \right)/ \left( A'/A \right).
\end{eqnarray*}
As our maps are natural, this gives $\mathrm{lr}(B)=\overline{A}/A$ and hence we are done.

In the proof we used tameness for $\overline{A}$ (which is one of the $A_i$) and the $A_i$.
\end{proof}

We now want some condition guaranteeing this tameness. 

\begin{lemma}
Let $A$ be an order over a discrete valuation ring $(R,\pa)$. Let $B=A^{\dagger}/A$. Let $A_0=A \otimes_R \hat{R}$ and
$A_{i+1}=\mathfrak{r}_{A_i}:\mathfrak{r}_{A_i}$ for $i \geq 0$. Then the orders $A_i$ are tame at $\pa \hat{R}$ if one of the following conditions is
satisfied.
\begin{enumerate}
\item
For every $\m \subset A$ maximal we have $\mathrm{dim}_{R/\pa}(B_{\m}/\pa B_{\m}) <\mathrm{char}(R/\pa)$;  
\item
We have $\mathrm{dim}_{R/\pa}(B/\pa B) <\mathrm{char}(R/\pa)$;
\item
$A$ is a Galois order over $R$ with group $G$ and for some prime $\m \subset A$ we have $\mathrm{char}(R/\pa) \nmid \# I_{\m/\pa}$;
\item
$A$ is a Galois order over $R$ with group $G$ and $\mathrm{char}(R/\pa) \nmid \# G$.
\end{enumerate}
\end{lemma}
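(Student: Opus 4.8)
The plan is to settle the two dimension conditions (i)--(ii) together and the two Galois conditions (iii)--(iv) together, after a common reduction. If $\mathrm{char}(R/\pa)=0$ there is nothing to prove, since every finite algebra over a field of characteristic $0$ is tame; so assume $\mathrm{char}(R/\pa)=\ell>0$. The orders $A_i$ already live over $\hat R$, and by Lemma \ref{cheat} (together with Lemma \ref{sor} and Lemma \ref{gas}) passing to the completion changes neither the residue field $R/\pa$, nor the module $B$ with its localizations at the maximal ideals, nor the property of tameness, nor the inertia groups in the Galois case. Hence we may replace $R$ by $\hat R$ and $A$ by $A\otimes_R\hat R$ and assume from now on that $R$ is a complete discrete valuation ring and $A=A_0$. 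By Lemma \ref{cos} and Lemma \ref{sop} (applied factor by factor, using Theorem \ref{ks}) each step $A_{i+1}=\mathfrak{r}_{A_i}:\mathfrak{r}_{A_i}$ is an order with $A_i\subseteq A_{i+1}\subseteq\overline{A_i}=\overline A$; so $A\subseteq A_i\subseteq\overline A$ for all $i$, the increasing chain $(A_i)$ stabilizes at $\overline A$, and it suffices to prove tameness of each of the finitely many orders occurring.

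For (i) and (ii), first note that (ii) implies (i): since $B/\pa B=\bigoplus_{\m}B_\m/\pa B_\m$ (exactness of localization plus the decomposition of the finite-length module $B$ over the maximal ideals of $A$), the global bound $\dim_{R/\pa}(B/\pa B)<\ell$ forces $\dim_{R/\pa}(B_\m/\pa B_\m)<\ell$ for every $\m$. Assuming (i), fix $i$ and apply Theorem \ref{tammy2} to the pair $A\subseteq A_i\subseteq\overline A$; its hypothesis is exactly condition (i), by Lemma \ref{sor} and Lemma \ref{cheat}. We conclude that $A_i$ is tame at $\pa$. This group of cases is thus an application of Theorem \ref{tammy2} plus the completion and localization bookkeeping.

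For (iii) and (iv), the preliminary point is that each $A_i$ is again a Galois order over $R$ with group $G$: for $A_0=A$ this is the hypothesis (and Lemma \ref{gas}), and inductively, if $G$ acts on $A_i$ by $R$-algebra automorphisms then $\mathfrak{r}_{A_i}$ is $G$-stable, hence so is $A_{i+1}=\mathfrak{r}_{A_i}:\mathfrak{r}_{A_i}\subseteq Q(A)$, and $Q(A_{i+1})=Q(A)$ is a finite Galois algebra over $Q(R)$ with group $G$, so $A_{i+1}$ is a Galois order with group $G$. By Theorem \ref{zoe}, $A_i$ is tame at $\pa$ as soon as $\ell\nmid\#I_{\qa/\pa}(A_i)$ for one prime $\qa$ of $A_i$ above $\pa$. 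Under (iv) this is immediate, since every such inertia group is a subgroup of $G$ and so has order dividing $\#G$. Under (iii), transitivity of the $G$-action on the primes above $\pa$ (Lemma \ref{43}) makes all the groups $I_{\m'/\pa}(A)$ with $\m'\cap R=\pa$ conjugate in $G$, so the hypothesis that $\ell\nmid\#I_{\m/\pa}$ for some $\m$ in fact holds for every such $\m$; then, given a prime $\qa$ of $A_i$ above $\pa$ with contraction $\m'=\qa\cap A$, any $g\in I_{\qa/\pa}(A_i)$ fixes $\m'$ and acts trivially on $A_i/\qa$, hence trivially on the subring $A/\m'\hookrightarrow A_i/\qa$, so $I_{\qa/\pa}(A_i)\subseteq I_{\m'/\pa}(A)$. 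By Lagrange, $\#I_{\qa/\pa}(A_i)$ divides $\#I_{\m'/\pa}(A)$, hence is prime to $\ell$, and Theorem \ref{zoe} concludes.

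The one genuinely non-formal step is the inclusion $I_{\qa/\pa}(A_i)\subseteq I_{\m'/\pa}(A)$: it records that the inertia group can only shrink as one climbs the chain of orders, which is precisely what lets condition (iii), a condition on the single order $A$, propagate to $\overline A$ and to the intermediate $A_i$. Everything else is formal: Lemma \ref{cheat} and Lemma \ref{gas} to reduce to the complete case and preserve the hypotheses, the observation that each $A_i$ lies between $A$ and $\overline A$ and (in the Galois case) remains Galois with group $G$, and then Theorem \ref{tammy2} or Theorem \ref{zoe}.
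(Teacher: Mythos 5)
Your proposal is correct and follows essentially the same route as the paper: reduce to the complete case via Lemma \ref{cheat}/\ref{sor}/\ref{gas}, observe that each $A_i$ is an order between $A_0$ and $\overline{A}$ (and, in the Galois case, remains a Galois order with group $G$), then invoke Theorem \ref{tammy2} for (i)--(ii) and Theorem \ref{zoe} for (iii)--(iv). The one place where you are more explicit than the paper is the containment $I_{\qa/\pa}(A_i)\subseteq I_{\qa\cap A/\pa}(A)$, which the paper asserts with a bare ``it follows''; your argument via the injection $A/(\qa\cap A)\hookrightarrow A_i/\qa$ is exactly the missing justification and is worth recording.
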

\begin{proof}
i. We know that $B_{\m}=\hat{A_{\m}}^{\dagger}/\hat{A_{\m}}$ by Lemma \ref{sor}. We have $A \otimes_R \hat{R}= \prod_{\m} \hat{A_{\m}}$ (Lemma
\ref{cheata}). By Theorem \ref{tammy2} we know that all orders between
$\hat{A_{\m}}$ and $\overline{\hat{A_{\m}}}$ are tame at $\pa \hat{R}$. Hence all orders between $A \otimes_R \hat{R}$ and $\overline{A}
\otimes_R \hat{R}$ are tame (Lemma \ref{mars}).

ii. This condition implies the first condition. 

iii. If the assumption holds for a single $\m$, it holds for all primes above $\pa$, since the inertia groups are conjugate.

 By Lemma \ref{gas} we know that $A \otimes_R \hat{R}$ is a Galois order over $\hat{R}$ with group $G$, and its inertia groups are the $I_{\m/\pa}$
where $\m$ ranges over the primes of $A$ lying above $\pa$. 
We now claim that the $A_i$ are Galois orders with group $G$ over $\hat{R}$. Indeed, as they have the same quotient field as $A_0$, we just need to
check that $G$ acts on them. By induction, it is enough to check it for $A_1$. First notice that $\mathfrak{r}_{A_0}$ is stable under $G$. Let $x \in
A_1$ and $g \in G$. Then we have 
\begin{eqnarray*}
g(x) \mathfrak{r}_{A_0} = g(x \mathfrak{r}_{A_0}) \subseteq g(\mathfrak{r}_{A_0})=\mathfrak{r}_{A_0}. 
\end{eqnarray*}
So $g(x) \in A_1$ and we are done. 

We can write $A \otimes_R \hat{R} =
\prod_{\m} \hat{A_{\m}}$ and the elements of $G$ fixing the prime corresponding to $\hat{A_{\m}}$ are exactly those who fix $\hat{A_{\m}}$. But then
it follows that the inertia groups of the $A_i$ are subgroups of the $I_{\m/\pa}$. Hence $\mathrm{char}(R/\pa)$ doesn't divide the order of
any inertia group
occurring. By Theorem \ref{zoe} we see that all $A_i$ are tame at $\pa \hat{R}$ as required.

iv. As $I_{\m/\pa} \subseteq G_{\qa/\pa} \subseteq G$ as subgroups, we have $\# I_{\m/\pa} | \#G$ and the result follows from iii. 
\end{proof}

\section{Examples}

\begin{example}
Let $f=x^4-20x^3-20x^2+17x +2 \in \Z[x]$ and let $A=\Z[x]/(f(x))$. Then for the discriminant we have $\Delta(f)=7^4 \cdot 13 \cdot 11897$ and
$A^{\dagger}/A \cong \Z/7\Z \times \Z/(7^3
\cdot 13 \cdot 11897)\Z$.
There is only one prime $p$ which might satisfy $p|[\overline{A}:A]$, namely $7$. By Theorem \ref{maino} we have that $7|[\overline{A}:A]$. By Lemma
\ref{bra} and Lemma \ref{or} we see that the form corresponding to the prime $7$ is quasi-anisotropic. By Theorem \ref{nano} we have
\begin{eqnarray*}
\overline{A}/A &=& \mathrm{lr}(A^{\dagger}/A)=\left( A+ \frac{3\alpha^3+\alpha^2+2}{7} \Z \right)/A.
\end{eqnarray*}
\end{example}

\begin{example}
Let $f=x^4-625x^3-125x^2-15625x-15625 \in \Z[x]$ and let $A=\Z[x]/(f(x))$. Then $\Delta(f)= 5^{20} \cdot 13 \cdot 457 \cdot 8111$. In this case
$A^{\dagger}/A \cong \Z/5^3 \Z \times
\Z/5^7 \Z \times \Z/5^{10} \cdot 13 \cdot 457 \cdot 8111 \Z$. 
A calculation, which uses the algorithmic description of anisotropy from \cite{KO2}, shows that the form at the prime $5$ is anisotropic. Now use
Theorem \ref{main5} to get
\begin{eqnarray*}
\overline{A}= \mathrm{lr}(A^{\dagger}/A)=\Z+ \left( \frac{3}{3125} \alpha^3 +\frac{1}{25} \alpha \right) \Z+ \frac{1}{125} \alpha^2 \Z+ \frac{1}{625}
\alpha^3 \Z.
\end{eqnarray*}
\end{example}

\begin{remark}
 The examples above come from algebraic number theory. One can also make examples using for example function fields. 
\end{remark}

\section{A better base ring}

In many cases we can't use Theorem \ref{tammy2} and in many other situations we have the problem that vector spaces of high dimensions with an inner
product are often isotropic. In practice the modules will have a large length as an $R$-module and this is caused by the fact that  $A^{\dagger}/A$ is
an $A$-module, not only an $R$-module. 

In this section let $(R,\pa)$ be a complete discrete valuation ring and let $(A,\m)$ be a local order over $R$. We will find a nice ring between $R$
and $A$ that can be used instead of $R$.

\begin{lemma} \label{nsw}
There is a unique $R$-subalgebra of $A$, say $T$, such that the map $\varphi: T  \to A/\m$ has kernel $\pa T$ and image $(A/\m)_s$, the separable
closure of $R/\pa$ inside $A/\m$. This ring $T$ has the following additional properties:
\begin{enumerate}
\item
$T$ is free over $R$ of rank equal to $[A/\m:R/\pa]_s$, the separability degree of $A/\m$ over $R/\pa$; 
\item
$T$ is a complete discrete valuation ring with maximal ideal $\pa T$; 
\item
$Q(T)$ is finite \'etale over $Q(R)$;
\item
$A$ is an order over $T$.
\end{enumerate}
\end{lemma}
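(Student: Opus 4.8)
The plan is to construct $T$ using Hensel's lemma and the structure of complete local rings, then verify the listed properties. First I would describe the separable closure: since $A/\m$ is a finite extension of the residue field $R/\pa$, write $(A/\m)_s$ for the maximal separable subextension; by the primitive element theorem it is generated by a single element $\bar\theta$ with separable minimal polynomial $\bar g \in (R/\pa)[X]$ of degree $d=[A/\m:R/\pa]_s$. Lift $\bar g$ to a monic $g\in R[X]$ of the same degree. Since $A$ is complete with respect to $\m$ and $\m$ lies over $\pa$ so that $A$ is $\pa$-adically complete as well, and since $\bar g$ is separable (so $\bar g'(\bar\theta)$ is a unit), Hensel's lemma gives a root $\theta\in A$ of $g$ reducing to $\bar\theta$. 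Set $T=R[\theta]\subseteq A$. The map $\varphi\colon T\to A/\m$ then has image $(R/\pa)[\bar\theta]=(A/\m)_s$, and I would identify its kernel as $\pa T$ below.

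Next I would establish the four properties more or less simultaneously. Because $g$ is monic of degree $d$ and $\bar g$ is irreducible, $g$ is irreducible over $Q(R)$ (a factorization would reduce to one of $\bar g$), so $Q(T)=Q(R)[X]/(g)$ is a field, finite \'etale over $Q(R)$ of degree $d$ — this gives (iii) — and $T=R[X]/(g)$ is free over $R$ of rank $d$, giving (i); here one checks $R[\theta]\cong R[X]/(g)$ since any $R$-linear relation among $1,\theta,\dots,\theta^{d-1}$ would, after dividing by a suitable power of $\pi$, reduce to a nontrivial relation in $(A/\m)_s$. For (ii): $T$ is a one-dimensional local ring (it is finite over $R$ and a domain, being a subring of the field $Q(T)$), its maximal ideal contains $\pa T$, and $T/\pa T\cong (R/\pa)[X]/(\bar g)$ is a field, so $\pa T$ is maximal; thus $\pa=(\pi)$ generates the maximal ideal and $T$ is a DVR, complete because it is finite over the complete ring $R$. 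This simultaneously shows $\ker\varphi=\pa T$. For (iv): $T\subseteq A$, $A$ is finitely generated torsion-free over $R$ hence over $T$, and $Q(A)=A\otimes_R Q(R)=A\otimes_T T\otimes_R Q(R)=A\otimes_T Q(T)$, which is finite \'etale over $Q(T)$ because it is finite \'etale over $Q(R)$ and $Q(T)/Q(R)$ is finite \'etale (composition, or: $\Delta(Q(A)/Q(T))$ divides a power of $\Delta(Q(A)/Q(R))\ne 0$ up to the nonzero factor $\Delta(Q(T)/Q(R))$); so $A$ is an order over $T$.

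Finally I would prove uniqueness. If $T'$ is another $R$-subalgebra of $A$ with $\ker(T'\to A/\m)=\pa T'$ and image $(A/\m)_s$, pick $\theta'\in T'$ mapping to $\bar\theta$; then $g(\theta')\in\ker(T'\to A/\m)=\pa T'$. Using that $T'$ is $\pa$-adically separated (it is contained in $A$, which is) and that $\bar g'(\bar\theta)$ is a unit, a Newton-iteration argument inside $T'$ produces a genuine root $\theta''\in T'$ of $g$ reducing to $\bar\theta$; but $A$ is a domain away from nilpotents — more carefully, $\theta$ and $\theta''$ are two roots of $g$ in $A$ congruent mod $\m$, and the usual Hensel uniqueness (valid since $A$ is $\m$-adically complete and $g'$ is a unit at $\bar\theta$) forces $\theta''=\theta$, so $R[\theta]\subseteq T'$; comparing $R$-ranks (both are free of rank $d$ over $R$, as $T'$ satisfies (i) by the same argument applied to it) gives $T'=R[\theta]=T$.

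I expect the main obstacle to be the uniqueness statement, specifically making the Hensel-type lifting and its uniqueness rigorous inside an arbitrary such $T'$ (which a priori is only known to be $\pa$-adically separated, not obviously complete) and cleanly ruling out the interference of nilpotents or idempotents in $A$; once one knows any valid $T'$ is free of rank $d$ over $R$ and contains a root of $g$ above $\bar\theta$, the rest is bookkeeping.
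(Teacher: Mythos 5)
Your construction and verification follow essentially the same route as the paper: lift a primitive element of $(A/\m)_s$ via Hensel's lemma using that $A$ is $\m$-adically complete, set $T=R[\theta]$, and read off the properties; the paper even uses the same Hensel-uniqueness trick for the uniqueness of $T$. The small differences are cosmetic — you prove $g$ irreducible over $Q(R)$ directly to get (iii), whereas the paper simply observes that $Q(T')\subseteq Q(A)$ is a subfield of a product of separable extensions and hence separable; and you deduce $T$ is a domain from $T\hookrightarrow Q(T)$, whereas the paper invokes regularity.

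There is one genuine (if small) gap, precisely at the step you wave off as bookkeeping. From $R[\theta]\subseteq T'$ with both free of rank $d$ over $R$ you cannot conclude $R[\theta]=T'$: free $R$-submodules of equal finite rank can be proper (e.g.\ $R\subsetneq \pi^{-1}R$ inside $Q(R)$). What saves the day is the hypothesis on the kernel: since $T\cap\m=\pa T$ and $\pa T'\subseteq\m$, one gets $T\cap\pa T'=\pa T$, so the inclusion induces an injection $T/\pa T\hookrightarrow T'/\pa T'$ which is compatible with the identifications of both sides with $(A/\m)_s$; hence it is an isomorphism, and Nakayama (applied to the finitely generated $R$-module $T'$) then yields $T=T'$. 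This is exactly what the paper's citation of Lemma 7.4 in Chapter II of \cite{HAG} is for. You should replace ``comparing ranks'' by this Nakayama argument. Also, your worry about whether $T'$ is complete can be dispatched immediately: $T'$ is an $R$-submodule of the finite free $R$-module $A$ over a noetherian complete local ring, hence finitely generated, hence $\pa$-adically complete; so the Newton iteration you run inside $T'$ is justified, and in fact you could simply quote Hensel's lemma in $T'$ directly, as the paper does.
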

\begin{proof}

We will first construct $T$. It follows from Theorem \ref{ks} that $A$ is complete with respect to $\m$. 
 
Now pick $\alpha \in A$ such that $(A/\m)_s=(R/\pa)[\overline{\alpha}]$. Let $f \in
R[x]$ be monic of degree $[A/\m:R/\pa]_s$ with $\overline{f}(\overline{\alpha})=0$. By construction we have $f(\alpha) \in \m$
and $f'(\alpha) \in A^*$ (by the separability). As $A$ is complete with respect to $\m$, we can apply Hensel's Lemma (\cite{EI}, Theorem 7.3) to find
a unique $\beta \in \alpha+\m$ such that $f(\beta)=0$. Now let $T=R[\beta]$. By construction the image under $\varphi$ is $(A/\m)_s$. As $\m
\cap R=\pa$ (\cite{AT}, Corollary 5.8) it follows that $\pa T \subset \mathrm{ker}(\varphi)$. Now consider the surjective map $\overline{\varphi}:
T/\pa T \to (A/\m)_s$. As the dimensions satisfy $\mathrm{dim}_{R/\pa}(T/\pa T) \leq [A/\m:R/\pa]_s$, the map is injective as well.

Now we will prove the four properties for any $T'$ satisfying the definition in the lemma. 

i. Notice that $R$ is a principal ideal domain and as $A$ is torsion free over $R$, $A$ is free over $R$. By assumption
$T'/\pa T'\cong (A/\m)_s$, and hence $T'$ is free over $R$ of rank $[A/\m:R/\pa]_s$. 

ii. As $T'/\pa T'\cong (A/\m)_s$, a field, it follows that $\pa T$ is a maximal and principal ideal. Theorem 7.2 from \cite{EI} says that $T'$ is
complete with respect to $\pa T'$ and hence local. As $T$ is a regular local ring, it is a domain by Corollary 10.14 from \cite{EI}. As the maximal
ideal is principal, it is a complete discrete valuation ring by \cite{AT},
Proposition 9.2. 

iii. and iv. We know that $Q(A)$ is finite \'etale over $Q(R)$. It follows that $Q(A)$ is a finite product of finite separable field extensions over $Q(R)$. By exactness of localization and Theorem \ref{quotient} we have the inclusions 
\begin{eqnarray*}
Q(R)=R \otimes_R Q(R) \subseteq Q(T')=T' \otimes_R Q(R) \subseteq Q(A)=A \otimes_R Q(R).
\end{eqnarray*}
We see that $Q(T')$ is a separable field extension of $Q(R)$.
This shows that $Q(T')$ is finite \'etale over $Q(R)$ and that $A$ is an order over $T'$. 

We will now prove that $T$ is unique. Suppose we have another $T'$ which satisfies the defining properties. Consider the map $\varphi': T' \to (A/\m)_s$ which has kernel $\pa T'$. By completeness of $T'$ at $\pa T'$ it follows that there is a unique $\alpha' \in \varphi'^{-1}(\overline{\alpha}) \subset \alpha+\m$ satisfying $f(\alpha')=0$. By uniqueness of $\alpha$ it follows that $\alpha=\alpha' \in T'$. Hence $T' \subseteq T$. Now apply Lemma 7.4 from \cite{HAG} Chapter II to see that $T'=T$. 

\end{proof}

Let $T$ be as in the above lemma. We will now prove some more properties. We let $A_R^{\dagger}$ respectively $A_T^{\dagger}$ be the trace duals with respect to $R$ respectively $T$. Similarly, $T_R^{\dagger}$ is the trace dual of $T$ with respect to $R$. 

\begin{lemma} \label{os}
The following properties hold.
\begin{enumerate}
\item
We have $\Delta(T/R) \in R^*$;
\item
$T_R^{\dagger}=T$;
\item
$A_T^{\dagger}=A_R^{\dagger}$.
\end{enumerate}
\end{lemma}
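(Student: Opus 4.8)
\textbf{Proof plan for Lemma \ref{os}.}

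The plan is to prove the three parts in order, each building on the previous one. For part (i), I would recall from Lemma \ref{nsw} that $T$ is a complete discrete valuation ring with maximal ideal $\pa T$ and that $T/\pa T \cong (A/\m)_s$ is a \emph{separable} field extension of $R/\pa$. Since $T$ is free over $R$, the discriminant $\Delta(T/R)$ is defined, and reducing modulo $\pa$ gives $\Delta(T/R) \bmod \pa = \Delta\big((T/\pa T)/(R/\pa)\big)$. As $(T/\pa T)/(R/\pa)$ is a finite separable field extension, this reduced discriminant is nonzero, so $\Delta(T/R) \notin \pa$, i.e. $\Delta(T/R) \in R^*$ since $R$ is local. (Alternatively, one may argue directly that $T = R[\beta]$ with $f'(\beta) \in A^*$, and $f'(\beta) \in T$ is then a unit in $T$, so the discriminant of $f$, up to a unit, is $\pm N_{T/R}(f'(\beta)) \in R^*$.)

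For part (ii), the goal is $T_R^\dagger = T$, i.e. the trace form $\mathrm{Tr}_{Q(T)/Q(R)}$ makes the free $R$-module $T$ self-dual. By part (i), $\Delta(T/R) \in R^*$, so the trace pairing $T \times T \to R$ is unimodular: the induced map $T \to \Hom_R(T,R)$ is an isomorphism of $R$-modules (its determinant in a chosen basis is a unit). Since by definition $T_R^\dagger = \{x \in Q(T) : \mathrm{Tr}_{Q(T)/Q(R)}(xT) \subseteq R\}$ is exactly the image of $\Hom_R(T,R) \to Q(T)$ under the identification coming from the nondegenerate $Q(R)$-bilinear trace form, unimodularity gives $T_R^\dagger = T$. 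I would also note $T \subseteq T_R^\dagger$ holds automatically since $\mathrm{Tr}_{Q(T)/Q(R)}(T) \subseteq R$ (Lemma \ref{longi}(ii), as $T$ is an order over $R$).

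For part (iii), I want $A_T^\dagger = A_R^\dagger$, where both are viewed inside $Q(A)$. The key input is the transitivity of the trace: $\mathrm{Tr}_{Q(A)/Q(R)} = \mathrm{Tr}_{Q(T)/Q(R)} \circ \mathrm{Tr}_{Q(A)/Q(T)}$. Now $x \in A_R^\dagger$ means $\mathrm{Tr}_{Q(A)/Q(R)}(xA) \subseteq R$, i.e. $\mathrm{Tr}_{Q(T)/Q(R)}\big(\mathrm{Tr}_{Q(A)/Q(T)}(xA)\big) \subseteq R$. Since $\mathrm{Tr}_{Q(A)/Q(T)}(xA)$ is a finitely generated $T$-submodule of $Q(T)$, the condition ``$\mathrm{Tr}_{Q(T)/Q(R)}$ of it lands in $R$'' says precisely that $\mathrm{Tr}_{Q(A)/Q(T)}(xA) \subseteq T_R^\dagger = T$ by part (ii); and that last condition is exactly $x \in A_T^\dagger$. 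So the two duals coincide. I expect part (iii) to be the main obstacle, but only mildly so: the only subtlety is justifying the step ``$\mathrm{Tr}_{Q(T)/Q(R)}(N) \subseteq R \iff N \subseteq T_R^\dagger$'' for a f.g. $T$-module $N \subseteq Q(T)$, which follows because $N = \sum_j T n_j$ and $\mathrm{Tr}_{Q(T)/Q(R)}$ is $R$-linear, so $\mathrm{Tr}_{Q(T)/Q(R)}(N) \subseteq R$ iff $\mathrm{Tr}_{Q(T)/Q(R)}(t n_j) \in R$ for all $t \in T$ and all $j$, iff each $n_j \in T_R^\dagger$. Everything else is routine manipulation of trace duals.
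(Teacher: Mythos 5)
Your proposal is correct and follows essentially the same approach as the paper's proof: part (i) via reduction of the discriminant modulo $\pa$ and separability of $(T/\pa T)/(R/\pa)$, part (ii) via invertibility of the Gram matrix, and part (iii) via transitivity of the trace together with the observation that $\mathrm{Tr}_{Q(A)/Q(T)}(xA)$ is a $T$-module, so that landing in $R$ after applying $\mathrm{Tr}_{Q(T)/Q(R)}$ is equivalent to being contained in $T_R^{\dagger}=T$. The paper phrases this last step by inserting a factor of $T$ in the chain of equivalences, but the substance is identical to your argument.
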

\begin{proof}
i. We have the following commutative diagram: 
\[
\xymatrix{ 
T \ar[d]_{\mathrm{Tr}_{T/R}} \ar[r] & T/\pa T \ar[d]^{\mathrm{Tr}_{T/\pa T / R/\pa}} \\
R \ar[r] & R/\pa.
}
\] 
By definition of $T$ the extension $T/\pa T \supset R/\pa$ is separable, hence has nonzero discriminant. Since the discriminant behaves well with respect to tensoring, this shows that $\Delta(T/R) \in R^*$. 

ii. If $e_1,\ldots, e_n$ is a basis of $T$ over $R$, then it follows that $(\mathrm{Tr}_{T/R}(e_i e_j)_{ij})$ is invertible over $R$ and it follows directly that $T_R^{\dagger}=T$. 

iii. We find using ii for $x \in Q(A)$:
\begin{eqnarray*}
x \in A_R^{\dagger} &\iff& \mathrm{Tr}_{Q(A)/Q(R)}(xA) \subseteq R \\
&\iff& \mathrm{Tr}_{Q(T)/Q(R)} \left( \mathrm{Tr}_{Q(A)/Q(T)}(xA) \right) \subseteq R \\
&\iff& \mathrm{Tr}_{Q(T)/Q(R)} \left(T \cdot \mathrm{Tr}_{Q(A)/Q(T)}(xA) \right) \subseteq R \\
&\iff& \mathrm{Tr}_{Q(A)/Q(T)}(xA) \subseteq T_R^{\dagger}=T \\
&\iff& x \in A_T^{\dagger}. 
\end{eqnarray*}
\end{proof}

We directly see that for a $T$-module $M$ of finite length we have 
\begin{eqnarray*}
\mathrm{lr}(M) &=& \sum_{i=0}^{\infty} \left( \pa^r M \cap M[\pa^r] \right) \\
&=& \sum_{i=0}^{\infty} \left( \pa^r T M \cap M[\pa^r T] \right).
\end{eqnarray*}
This means that the lower root with respect to $R$ is the same as with respect to $T$ for such a module.

\begin{lemma} \label{cs}
Let $T_1 \subseteq T_2$ be a local rings with maximal ideal $\m_1$ respectively $\m_2$ such that $\m_2 \cap T_1=\m_1$. Suppose
that $[T_2/\m_2:T_1/\m_1]<\infty$. Let $M$ be an $T_2$-module of finite length. Then $M$ has finite length over $T_1$ and we have
\begin{eqnarray*}
\mathrm{length}_{T_1}(M)=[T_2/\m_2:T_1/\m_1] \cdot \mathrm{length}_{T_2}(M).
\end{eqnarray*}
\end{lemma}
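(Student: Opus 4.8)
The plan is to reduce to the case of a composition series. First I would observe that since $M$ has finite length over $T_2$, there is a composition series $M = M_0 \supsetneq M_1 \supsetneq \cdots \supsetneq M_\ell = 0$ of $T_2$-submodules with each successive quotient $M_{i}/M_{i+1} \cong T_2/\m_2$ as $T_2$-modules, where $\ell = \mathrm{length}_{T_2}(M)$. Since length is additive on short exact sequences (\cite{AT}, Proposition 6.9, applied over $T_1$), it suffices to compute $\mathrm{length}_{T_1}(M_i/M_{i+1}) = \mathrm{length}_{T_1}(T_2/\m_2)$ and check this is finite; summing over the $\ell$ factors then gives the claimed formula. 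So the problem collapses to the single statement $\mathrm{length}_{T_1}(T_2/\m_2) = [T_2/\m_2 : T_1/\m_1]$.

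For this, I would note that $T_2/\m_2$ is a field, and by hypothesis $\m_2 \cap T_1 = \m_1$, so the composite $T_1 \hookrightarrow T_2 \twoheadrightarrow T_2/\m_2$ factors through an injection $T_1/\m_1 \hookrightarrow T_2/\m_2$ of fields. Thus $T_2/\m_2$ is a $T_1/\m_1$-vector space, finite-dimensional of dimension $[T_2/\m_2:T_1/\m_1]$ by assumption. A $T_1$-submodule of $T_2/\m_2$ is the same thing as a $T_1/\m_1$-subspace (since $\m_1$ acts as zero), so a composition series of $T_2/\m_2$ as a $T_1$-module is exactly a maximal flag of $T_1/\m_1$-subspaces, which has length equal to the $T_1/\m_1$-dimension. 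Hence $\mathrm{length}_{T_1}(T_2/\m_2) = [T_2/\m_2:T_1/\m_1] < \infty$, and in particular $M$ itself has finite length over $T_1$.

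The argument is essentially formal, so I do not anticipate a serious obstacle; the only point requiring a little care is making sure the composition series of $M$ over $T_2$ really does have all its graded pieces isomorphic to $T_2/\m_2$ — this uses that $T_2$ is local with maximal ideal $\m_2$, so the only simple $T_2$-module is $T_2/\m_2$, which is what the existence of a composition series of finite length gives. Everything else is the additivity of length on short exact sequences and the identification of $T_1$-submodule structure on a module killed by $\m_1$ with $T_1/\m_1$-vector space structure. I would present the proof in the order: (1) reduce to $T_2/\m_2$ via a composition series and additivity of length; (2) identify the $T_1$-length of $T_2/\m_2$ with its dimension as a $T_1/\m_1$-vector space; (3) conclude.
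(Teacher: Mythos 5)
Your proof is correct and is exactly the argument the paper has in mind: the paper's one-line proof simply asserts the key fact $\mathrm{length}_{T_1}(T_2/\m_2)=[T_2/\m_2:T_1/\m_1]$, and your composition-series reduction plus the vector-space identification is the standard way to justify both this fact and its use.
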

\begin{proof}
This follows from the fact that $\mathrm{length}_{T_1}(T_2/\m_2)=[T_2/\m_2:T_1/\m_1]$.  
\end{proof}

For a $T$-module $M$ of finite length we find $\mathrm{length}_R(M)= \mathrm{length}_T(M) \cdot [A/\m:R/\pa]_s$.

Then we have the following commutative diagram, where $^-$ stands for the reduction module $A$, $T$ or $R$:
\[
\xymatrix{ 
A^{\dagger}/A \times A^{\dagger}/A \ar[rrrr]^{(\overline{x},\overline{y}) \mapsto \overline{\mathrm{Tr}_{Q(A)/Q(T)}(xy)}}
\ar[drrrr]_{\varphi} &&&& Q(T)/T \ar[d]^{\overline{t}
\mapsto \overline{\mathrm{Tr}_{Q(T)/Q(R)}(t)}}\\
                           &&&& Q(R)/R
}
\]
where $\varphi(\overline{x},\overline{y})=\overline{\mathrm{Tr}_{Q(A)/Q(R)}(xy)}$.

\begin{example}
 Let $f(x)=x^4+25x^3+92x^2+89x+34 \in \Z_3[x]$. Let $A=\Z_3[\alpha]$ where $\alpha$ is a zero of $f$. Then $A$ is an order as $f$ is irreducible
over $\Z$ and $A \cong \Z[x]/(f(x)) \otimes_{\Z} \Z_3$. The ring $A$ has just one prime ideal above $3$, namely $(3,\alpha^2+2\alpha+2)$, with
residue field $\F_9$. As there is a unique unramified extension of $\Z_3$ of degree $2$, we know that $\Z_3[i] \subset A$ and this is a better ring
to work over. We have over $\Z_3$ that $A^{\dagger}/A \cong \left(\Z_3/3^2\Z_3\right)^2$ (actually, the form is anisotropic, but one needs a
calculation to see this). Over $\Z_3[i]$ we find $A^{\dagger}/A \cong \Z_3[i]/3^2 \Z_3[i]$ and by Lemma \ref{or} and Theorem
\ref{main5} we know that the integral closure is given by the lower root. 
\end{example}

\section{Using the better base ring} \label{beba}

Let $A$ be an order over a discrete valuation ring $(R,\pa)$. Let $C$ be any $A$-module that has finite length as an $R$-module. Then it has finite
length as $A$-module (\cite{EI}, Theorem 2.13). By Theorem 2.13 from \cite{EI} we have $C \cong_A \bigoplus_{\m \in \mathrm{MaxSpec}(A)} C_{\m}$. We
will now focus on such a factor $C_{\m}$ as $R$-module, which still has finite length over
$R$ and $A_{\m}$. We have
\begin{eqnarray*}
 C_{\m} \cong_R \bigoplus_{i \geq 1} (R/\pa^i)^{n(i,\m)}.
\end{eqnarray*}
We claim that $[A/\m:R/\pa]$ divides  $n(i,\m)$. To see this notice that \[\left( \left( \pa^{i-1}C_{\m} \right)[\pa] +\pa^i C_{\m} \right)/\pa^i
C_{\m} \cong_R
(R/\pa)^{n(i,\m)}.\] But the left hand side is an $A$-module, so by Lemma \ref{cs} we know that $[A/\m:R/\pa]$ divides $n(i,\m)$. 

We can apply the above to $A^{\dagger}/A$ and $\overline{A}/A$.
We can finally prove a local version of Theorem \ref{3} from the introduction. 

\begin{theorem} \label{fin}
 Let $(R,\pa)$ be a discrete valuation ring and let $A$ be an order over $R$. Let $\m$ be a maximal ideal of $A$ and let $B=\left(A^{\dagger}/A
\right)_{\m} \cong_R \bigoplus_{i \geq 1} (R/\pa^i)^{n(i,\m)}$. Suppose that the following conditions are satisfied: 
\begin{enumerate}
\item
$\mathrm{char}(A/\m)=0$ or $\mathrm{char}(A/\m)> \mathrm{dim}_{R/\pa}(B/\pa B)=\sum_{i \geq 1} n(i,\m)$;
\item
There exist $i_1,i_2 \in \Z_{\geq 1}$ such that 
\begin{itemize}
\item
$i_1 \not \equiv i_2 \bmod{2}$;
\item
$n(i,\m)=0$ for all $i \not \in \{1,i_1,i_2\}$;
\item
$n(i,\m) \in \{0,[A/\m:R/\pa]\}$ for $i \in \{i_1,i_2\}$.
\end{itemize}
\end{enumerate}
Then we have $\left(\overline{A}/A \right)_{\m}=\mathrm{lr}\left(
(A^{\dagger}/A)_{\m} \right)$
\end{theorem}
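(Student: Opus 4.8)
The plan is to reduce to the complete local case, replace the base ring $R$ by the ``better base ring'' $T$ of Lemma \ref{nsw}, check that over $T$ the two numerical hypotheses force the trace form on $B$ to be quasi-anisotropic, and then invoke Theorem \ref{nano}. For the reduction: completing at $\pa$ and using Lemma \ref{cheata} writes $A\otimes_R\hat R\cong\prod_{\m'\supseteq\pa A}\hat{A_{\m'}}$ as a finite product of complete local orders over $\hat R$, and by Lemma \ref{sor} (together with the analogous statement for $\overline A/A$, which follows from Lemma \ref{cheat} and the fact that integral closure commutes with finite products) the factor at $\m$ satisfies $(A^\dagger/A)_\m\cong\hat{A_\m}^\dagger/\hat{A_\m}$ and $(\overline A/A)_\m\cong\overline{\hat{A_\m}}/\hat{A_\m}$. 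So it suffices to treat a complete local order $A$ over a complete discrete valuation ring $(R,\pa)$, with maximal ideal $\m$ and $B=A^\dagger/A\cong_R\bigoplus_{i\ge1}(R/\pa^i)^{n(i)}$ satisfying (1) and (2), and to show $\overline A/A=\mathrm{lr}(B)$. If $B=0$ there is nothing to prove; otherwise pick $i_0$ with $n(i_0)\ne0$, and since $[A/\m:R/\pa]\mid n(i_0)$ (the divisibility noted just before the statement) we get $[A/\m:R/\pa]\le\sum_i n(i)$, so (1) gives $\mathrm{char}(A/\m)\nmid[A/\m:R/\pa]$; hence $A/\m$ is separable over $R/\pa$.

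Next I would pass to $T$. Let $T\subseteq A$ be the subring of Lemma \ref{nsw}; by the separability just established $T/\pa T=(A/\m)_s=A/\m$, so $T$ is a complete discrete valuation ring with maximal ideal $\pa T$, free over $R$ of rank $[A/\m:R/\pa]$, $A$ is an order over $T$, and $A^\dagger_T=A^\dagger_R=A^\dagger$ by Lemma \ref{os}. Since $\pi$ generates both $\pa$ and $\pa T$, the canonical subquotient $\bigl((\pa^{i-1}B)[\pa]+\pa^iB\bigr)/\pa^iB$, which is $\cong(R/\pa)^{n(i)}$ over $R$, is $\cong(T/\pa T)^{m(i)}$ over $T$, and since it is an $A$-module with $A/\m=T/\pa T$, Lemma \ref{cs} yields $B\cong_T\bigoplus_{i\ge1}(T/\pa^iT)^{m(i)}$ with $n(i)=[A/\m:R/\pa]\,m(i)$; so (2) becomes $m(i)=0$ for $i\notin\{1,i_1,i_2\}$ and $m(i_1),m(i_2)\in\{0,1\}$. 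Moreover $\dim_{T/\pa T}(B/\pa T\,B)=\sum_i m(i)\le\sum_i n(i)<\mathrm{char}(T/\pa T)$ (or $\mathrm{char}(T/\pa T)=0$), so Theorem \ref{tammy2} over $T$ shows that every order between $A$ and $\overline A$ is tame at $\pa T$; in particular $\overline A$ and all iterates $A_{i+1}=\mathfrak{r}_{A_i}:\mathfrak{r}_{A_i}$, which lie in $\overline A$ by Lemma \ref{cos}, are tame at $\pa T$.

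Now consider the $T$-bilinear trace form $\bi:B\times B\to I^{-1}/T$ with $I=\mathrm{Ann}_T(B)$ (nonzero, as $B$ has finite length), from Lemma \ref{domino}. By Lemma \ref{bra} it is quasi-anisotropic iff the induced form on $B/B[\pa]\cong_T(T/\pa^{i_1-1}T)^{m(i_1)}\oplus(T/\pa^{i_2-1}T)^{m(i_2)}$ is anisotropic. This module is generated by $m(i_1)+m(i_2)\le2$ elements; if it genuinely needs two, then $m(i_1)=m(i_2)=1$ and $i_1,i_2\ge2$, its length is $(i_1-1)+(i_2-1)$, which is odd because $i_1\not\equiv i_2\bmod2$, and $\mathrm{char}(T/\pa T)\ne2$ (otherwise $2>\sum_i n(i)\ge n(i_1)+n(i_2)=2[A/\m:R/\pa]\ge2$), so the induced form is anisotropic by Lemma \ref{or}(ii); otherwise $B/B[\pa]$ is cyclic (possibly zero) and the induced form is anisotropic by Lemma \ref{or}(i). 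Hence $\bi$ is quasi-anisotropic, and applying Theorem \ref{nano} over $T$ (with $A_0=A\otimes_T\hat T=A$, using the tameness just verified) gives $\overline A/A=\mathrm{lr}_T(B)$. Since the lower root of a finite-length module is the same over $T$ as over $R$ (the remark following Lemma \ref{cs}) and $\overline A/A$ is independent of the base ring, this is $\overline A/A=\mathrm{lr}_R(B)$, which unwinds to $(\overline A/A)_\m=\mathrm{lr}((A^\dagger/A)_\m)$ in the original notation.

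I expect the main obstacle to be the passage to $T$: over the original $R$ the module $B$ may require as many as $2[A/\m:R/\pa]$ generators, far too many to apply Lemma \ref{or} directly, so the better base ring is essential — and it is precisely the numerical hypotheses (forcing $A/\m$ to be separable over $R/\pa$, hence $T/\pa T=A/\m$, and forcing each $m(i_j)\in\{0,1\}$) that cut the problem down to the cyclic or the two-generated, odd-length case in which quasi-anisotropy can be checked. Everything else is routine bookkeeping with the localization and completion lemmas and a citation of Theorem \ref{nano}.
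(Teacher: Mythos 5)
Your proposal is correct and follows essentially the same route as the paper: reduce to a complete local order via Lemma \ref{cheata} and Lemma \ref{sor}, observe that condition (i) forces $A/\m$ to be separable over $R/\pa$, pass to the better base ring $T$ of Lemma \ref{nsw}, verify tameness via Theorem \ref{tammy2} and quasi-anisotropy via Lemmas \ref{bra} and \ref{or}, and conclude with Theorem \ref{nano}. The only cosmetic difference is that the paper applies Theorem \ref{tammy2} over $R$ and transfers tameness to $T$ with Lemma \ref{hu}, whereas you apply Theorem \ref{tammy2} directly over $T$; both are valid.
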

\begin{proof}
 The idea of the proof is to complete and then work over the better base ring from the previous section. 
Lemma \ref{cheata} and Lemma \ref{sor} allow us to assume that $A$ is a local order with maximal ideal $\m$ over a complete discrete valuation ring.
In this case we can use our
better base ring $T$
as in Lemma \ref{nsw}.
If all $n(i,\m)$ are zero, then the conclusion of the theorem is correct. Otherwise it follows from assumption i that $\mathrm{char}(A/\m)=0$ or
$\mathrm{char}(A/\m)>[A/\m:R/\pa]=d$. Both of these assumptions imply that $A/\m \supseteq R/\pa$ is separable. Now let $A'$ be any $T$-order between
$A$ and $\overline{A}$. Notice that $Q(T)$ is a finite \'etale $Q(R)$-algebra (Lemma \ref{nsw}), and hence it directly follows that $A'$ is an
order over $R$. Theorem \ref{tammy2} and condition i imply that $A'$ is tame at $\pa$. As $R/\pa \subseteq T/\pa T \subseteq A'/\pa A'=A'/\pa T A'$,
it follows by 
Lemma \ref{hu} that $A'$ is tame at $\pa T$. 

The result now follows directly from Theorem \ref{nano} if we can show that the induced form is quasi-anisotropic. We know that
$\left( \left( \pa^{i-1}TB \right)[\pa T] +\pa^i T B \right)/\pa^i TB \cong_R
(R/\pa)^{n(i,\m)}$. Using Lemma \ref{cs} and the separability of $A/\m \supseteq R/\pa$ it follows that $B \cong_T
(T/\pa T)^{n(1,\m)/d} \oplus (T/\pa^{i_1}T)^{n(i_1,\m)/d} \oplus (T/\pa^{i_2}T)^{n(i_2,\m)/d}$. Now use condition ii, Lemma \ref{bra}, Lemma
\ref{or}
to see that our form is indeed quasi-anisotropic.
\end{proof}

We can now prove the last theorem of the introduction. 

\begin{proof}[Proof of Theorem \ref{3}.]
Reduce to the local case by Lemma \ref{floop} and use Theorem \ref{fin}.
\end{proof}

\section{Acknowledgements} 

I would like to thank Professor Hendrik Lenstra for essentially coming up with all the new theory and for helping me to write this article. Without
his help this article wouldn't be possible.

\end{document}